\definecolor{darkviolet}{rgb}{0.58, 0.0, 0.83}
\def\R{\mathbb{R}}
\numberwithin{equation}{section}
\newcommand{\EP}{\exp L^2}
\newcommand{\EPO}{\exp L_0^2}
\newtheorem{theorem}{Theorem}[section]
\newtheorem{lemma}[theorem]{Lemma}
\newtheorem{proposition}[theorem]{Proposition}
\newtheorem{corollary}[theorem]{Corollary}
\newtheorem{definition}[theorem]{Definition}
\newtheorem{remark}[theorem]{Remark}
\newtheorem{remarks}[theorem]{Remarks}
\author[M. Majdoub]{Mohamed Majdoub\textsuperscript{1}}
\author[S. Otsmane]{Sarah Otsmane\textsuperscript{2}}
\author[S. Tayachi]{Slim Tayachi\textsuperscript{2}}
\title[The Biharmonic heat equation with exponential nonlinearity]
{Local well-posedness and global existence for the biharmonic heat equation with exponential nonlinearity}
\subjclass[2010]{35K91, 35A01, 35B40, 35K25, 35K30, 46E30}
\keywords{Biharmonic heat equation, Exponential nonlinearity, Orlicz spaces, Well-posedness, Global existence, Decay estimates.}
\begin{document}
\maketitle
\begin{center}
{\footnotesize \textsuperscript{1} Imam Abdulrahman Bin Faisal University, College of Science, Mathematics Department, Dammam, KSA, E-mail address: mmajdoub@uod.edu.sa}

{\footnotesize \textsuperscript{2} Universit\'e de Tunis El Manar, Facult\'e des Sciences de Tunis, D\'epartement de Math\'ematiques, Laboratoire \'equations aux d\'eriv\'ees partielles (LR03ES04), 2092 Tunis,
Tunisie,\\
E-mail addresses: sarah.osmane4@gmail.com (S. Otsmane),  slim.tayachi@fst.rnu.tn  (S. Tayachi)}
\end{center}
\begin{abstract} In this paper we prove local well-posedness in Orlicz spaces for the biharmonic heat equation $\partial_{t} u+ \Delta^2 u=f(u),\;t>0,\;x\in\R^N,$ with $f(u)\sim \mbox{e}^{u^2}$ for large $u.$ Under smallness condition on the initial data and for exponential nonlinearity $f$ such that  $|f(u)|\sim |u|^m$ as $u\to 0,$  $m\geq 2$, $N(m-1)/4\geq 2$, we show that the solution is global. Moreover, we obtain decay estimates for large time for the nonlinear biharmonic heat equation as well as for the nonlinear heat equation. Our results extend to the nonlinear polyharmonic heat equation.
\end{abstract}

%%%%%%%%%%%%%%%%%%%%%%%%%%%%%%%%%%%%%%%%%%%%%%%%%%%%%%%%%%%%%%%%%%%%%%%%%%%%%%%%%%%%%%%%%%%%%%%ù%%%

\section{Introduction}

%%%%%%%%%%%%%%%%%%%%%%%%%%%%%%%%%%%%%%%%%%%%%%%%%%%%%%%%%%%%%%%%%%%%%%%%%%%%%%%%%%%%%%%%%%%%%%%ù%%%%%%%%

In this paper we study the local well-posedness and the global existence of solution to the Cauchy problem:
\begin{equation}\label{1.1}
\left\{\begin{array}{cc}
\partial_{t} u+ \Delta^2 u=f(u)  ,   \\
u(0,x)=u_{0}(x),
\end{array}
\right.
\end{equation}
where $u(t,x)$ is a real valued function  $t>0,\,\,x\in\R^N,$ $\Delta^2$ is the biharmonic operator, and $f : \R\to\R$  having an exponential growth at infinity.

Higher order parabolic equations, in particular fourth order parabolic equations are used in many models. They arise in Cahn-Hillard equations, image segmentation, epitaxial thin film growth,
surface diffusion flow equations, biharmonic heat equation, etc.
See \cite{KingWinkler,Vertman,BellShubinStephens,AsaiGiga,Escudero} and references therein.
In the literature no so much is known on higher order parabolic equations, compared with the second order ones.
This is due in particular to the lack of the maximum principle.
In this paper, we study the local existence and asymptotic behavior for the biharmonic heat equation with initial data in Orlicz spaces and with nonlinearity behaving as $e^{u^2}$ at infinity.
We also obtain some results for the higher order nonlinear heat equation.

As is a standard practice, we study \eqref{1.1} via the associated integral equation:
\begin{equation}
\label{integral}
u(t)= {\rm e}^{-t\Delta^2}u_{0}+\int_{0}^{t}{\rm e}^{-(t-s)\Delta^2}\,f(u(s))\,ds,
\end{equation}
where ${\rm e}^{-t\Delta^2}$ is the biharmonic heat semi-group.

It is well known that if $u_0\in L^\infty(\R^N)$ and $f(0)=0$, then there exists a unique local solution $u\in C((0,T];\,L^\infty(\R^N))$ to \eqref{1.1}. If $u_0\not\in L^\infty(\R^N)$, the first result with singular initial
data is due to Weissler \cite{Indiana} for power nonlinearities $f(u)=|u|^{m-1}\,u,$ $m>1$ is a real number and $u_0\in L^q(\R^N),$\, $1\leq q<\infty$. From \cite[Theorem 2, p. 82]{Indiana} with $A=-\Delta^2$,
we have local well-posedness in $L^q(\R^N)$ for $q>q_c:=\frac{N(m-1)}{4},\; q\geq 1.$

For exponential nonlinearities $f(u)\sim {\rm e}^{u^2}$ , $u$ large, the better space is the so-called Orlicz space  $\exp L^2(\R^N)$ which is a generalization of Lebesgue spaces and is
embedded in $L^q(\R^N)$ for every $2\leq q<\infty.$ While the second order heat equation with exponential nonlinearity has been studied extensively before, see \cite{IJMS, Ioku, IRT, RT} and references therein, to our best  knowledge there is no previous study on the biharmonic heat equation with a such nonlinearity.
This motivates us to consider the problem \eqref{1.1} with exponential growth nonlinearity.

The Orlicz space $\exp L^2(\R^N)$ is defined as follows
\begin{equation*}
    \exp L^2(\R^N)=\bigg\{\,u\in L^1_{loc}(\R^N);\;\int_{\R^N}\Big({\rm e}^{|u(x)|^2\over\alpha^2}-1\Big)\,dx<\infty ,\,\;\mbox{for some}\,\; \alpha>0\, \bigg\},
\end{equation*}
endowed with the Luxembourg norm
\begin{equation*}
    \|u\|_{\exp L^2(\R^N)}:=\inf\biggr\{\,\alpha>0;\,\,\,\,\int_{\R^N} \Big({\rm e}^{|u(x)|^2\over\alpha^2}-1\Big)\,dx\leq1\,\biggl\}.
\end{equation*}
Since the space of smooth compactly supported functions $C^{\infty}_0(\R^N)$ is not dense in the Orlicz space $\exp L^2(\R^N)$ (see \cite{Ioku, IRT}), we use the space $\exp L^2_0(\R^N)$  which is the closure
of $C^{\infty}_0(\R^N)$ with respect to  the Luxemburg norm $\|\cdot\|_{\exp L^2(\R^N)}$.
It is known that, see \cite{IRT},
\begin{equation}\label{exp L0}
    \exp L^2_0(\R^N)=\biggr\{\,u\in L^1_{loc}(\R^N);\;\int_{\R^N}\Big({{\rm e}^{|u(x)|^2\over\alpha^2}-1\Big)}\,dx<\infty ,\,\; \mbox{for every}\,\; \alpha>0\, \biggl\}.
\end{equation}
As it will be shown in Section 3, the biharmonic heat semi-group  ${\rm e}^{-t\Delta^2}$ is continuous at $t=0$ in $\exp L^2_0(\R^N).$ However, this is not the case in  $\exp L^2(\R^N).$

In the sequel, we adopt the following definitions of weak and weak-mild solutions to the Cauchy problem \eqref{1.1}.
\begin{definition}[{Weak solution}]
\label{weak}
Let $u_0\in\exp L^2_0(\R^N)$ and $T>0$. We say that the function $u\in C([0,T];\,\exp L^2_0(\R^N))$ is a weak solution of \eqref{1.1} if $u$ verifies \eqref{1.1} in the sense of distribution and $u(t)\to u_0$ in the weak$^*$topology as $t\searrow 0.$
\end{definition}

\begin{definition}[{Weak-mild solution}]
\label{weakmild}
We say that $u\in L^{\infty}(0,T;\, \exp L^2(\R^N))$  is a weak-mild solution of
the Cauchy problem \eqref{1.1} if $u$ satisfies the associated integral equation \eqref{integral} in $\exp L^2(\R^N)$ for almost all $t\in (0,T)$ and $u(t)\rightarrow u_0$ in the weak$^*$ topology as $t\searrow 0.$
\end{definition}

We are first interested in the local well-posedness. Since $C_0^\infty(\R^N)$ is dense in $\exp L^2_0(\R^N)$, we are able to prove local existence and uniqueness to \eqref{1.1}  for initial data in $\exp L^2_0(\R^N)$. We assume that the nonlinearity $f$ satisfies
\begin{equation}\label{1.9}
    f(0)=0,\qquad |f(u)-f(v)|\leq C|u-v|({\rm e}^{\lambda \,u^2}+{\rm e}^{\lambda \,v^2}),\;\forall\;\;u,\,v\in\R,
\end{equation}
for some constants $C>0$ and $\lambda>0$. Our first main result reads as follows.
\begin{theorem}[{Local well-posedness}]\label{local} Suppose that $f$ satisfies \eqref{1.9}. Given any $u_0\in\exp L^2_0(\R^N),$ there exist a time $T=T(u_0)>0$ and a unique weak solution $u\in C ([0,T];\,\exp L^2_0(\R^N))$ to \eqref{1.1}.
\end{theorem}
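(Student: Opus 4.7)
The plan is to apply Banach's fixed point theorem to the mild-solution map
\[
 \Phi(u)(t):= e^{-t\Delta^2}u_0 + \int_0^t e^{-(t-s)\Delta^2}f(u(s))\,ds
\]
on a closed ball in $C([0,T];\EPO)$ centred at the free evolution $e^{-t\Delta^2}u_0$, with radius and $T=T(u_0)$ to be chosen.

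First I would collect the needed linear estimates for the biharmonic semigroup, many of which are promised for Section 3 of the paper: the contraction bound $\|e^{-t\Delta^2}v\|_{\EP}\le\|v\|_{\EP}$, obtained from Young's inequality applied to convolution with the biharmonic kernel together with convexity of the Young function $s\mapsto e^{s^2}-1$; the strong continuity $\lim_{t\downarrow 0}e^{-t\Delta^2}u_0=u_0$ in $\EPO$; and the $L^p$--$L^q$ smoothing
\[
 \|e^{-\tau\Delta^2}g\|_{L^q}\le C\,\tau^{-\frac{N}{4}(\frac{1}{p}-\frac{1}{q})}\|g\|_{L^p},\qquad 1\le p\le q\le\infty.
\]
Combined with density of $C_0^\infty$ in $\EPO$, these ensure the linear part of $\Phi$ lies in $C([0,T];\EPO)$.

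Next I would estimate the Duhamel term. Using \eqref{1.9} in the form $|f(u)|\le C|u|e^{\lambda u^2}$ and H\"older's inequality with a suitable pair of conjugates $r,r'$,
\[
 \|f(u)\|_{L^p}\le C\,\|u\|_{L^{pr}}\,\Bigl(\int_{\R^N}e^{pr'\lambda u^2}\,dx\Bigr)^{1/(pr')}.
\]
The Lebesgue norm is controlled via $\|u\|_{L^{2k}}\le C\sqrt{k}\,\|u\|_{\EP}$, a consequence of the Taylor expansion of $e^{s^2}-1$. The exponential factor is finite provided $pr'\lambda$ stays below the reciprocal of $\|u\|_{\EP}^2$. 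Since no smallness of $u_0$ is assumed, this is exactly where membership in $\EPO$ matters: by the characterisation \eqref{exp L0} one has $\int_{\R^N}(e^{Ku_0^2}-1)\,dx<\infty$ for \emph{every} $K>0$, and by strong continuity of the linear semigroup this uniform integrability persists on a short time interval for $e^{-t\Delta^2}u_0$. Shrinking $T$ keeps the iterates in a neighbourhood of the free evolution on which $e^{pr'\lambda u^2}\in L^1$ uniformly.

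Combining these ingredients and choosing $p$ large enough that $N/(4p)<1$, one obtains $\|\Phi(u)(t)-e^{-t\Delta^2}u_0\|_{\EP}=O(T^{1-N/(4p)})$, so $\Phi$ sends the ball into itself for $T$ small. A parallel difference estimate coming from the Lipschitz bound in \eqref{1.9} shows that $\Phi$ is a strict contraction. Banach's theorem then yields the unique fixed point; continuity at $t=0$ in $\EPO$ follows by applying the same analysis to $u(t)-u_0$, and the passage from the mild formulation \eqref{integral} to the weak sense of Definition \ref{weak} is standard.

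The main obstacle is the interplay between the exponential growth of $f$, which normally forces a smallness condition on the initial Orlicz norm, and the absence of any such condition in the theorem. Working in $\EPO$ rather than $\EP$ circumvents this (an idea borrowed from the second-order heat equation literature), but one must carefully propagate the quantitative bound on $\int_{\R^N}(e^{K u_0^2}-1)\,dx$ through the iteration. Once that technical point is in place, the remaining steps — temporal integrability of the biharmonic kernel singularity, contraction, and weak-solution verification — reduce to fourth-order analogues of well-established parabolic arguments.
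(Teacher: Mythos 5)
Your route is genuinely different from the paper's: you run a single fixed point in a ball of $C([0,T];\EPO)$ centred at the free evolution ${\rm e}^{-t\Delta^2}u_0$, whereas the paper splits the \emph{datum}, $u_0=v_0+w_0$ with $v_0\in C_0^\infty$ and $\|w_0\|_{\EP}\le\varepsilon$, solves $(\mathcal{P}_1)$ to get a bounded solution $v$ (so that ${\rm e}^{\lambda v^2}\le {\rm e}^{\lambda\|v\|_\infty^2}$ trivially), and then solves the perturbed problem $(\mathcal{P}_2)$ for the small part. Your idea can be made to work for \emph{existence}, but the crucial step is only asserted, not proved, and as written it contains errors. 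First, $\int_{\R^N}{\rm e}^{pr'\lambda u^2}\,dx$ and ``${\rm e}^{pr'\lambda u^2}\in L^1$'' are literally infinite on $\R^N$; you must work with ${\rm e}^{\lambda u^2}-1$ throughout. Second, the condition ``$pr'\lambda$ below the reciprocal of $\|u\|_{\EP}^2$'' fails for the iterates, since $\|{\rm e}^{-t\Delta^2}u_0\|_{\EP}$ is not small; the correct mechanism is to write $u={\rm e}^{-t\Delta^2}u_0+w$, apply the quantitative smallness lemma (Lemma \ref{med}) \emph{only} to $w$ — which forces you to fix the radius $\delta$ of the ball first, by a condition of the type $\lambda q\delta^2\lesssim 1$, and only then shrink $T$ — and to establish the uniform bound $\sup_{0\le s\le T}\|{\rm e}^{K({\rm e}^{-s\Delta^2}u_0)^2}-1\|_{L^q}<\infty$ for every $K>0$. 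The latter is exactly where $\EPO$ enters, and it requires Proposition \ref{continu0} together with the continuity of $v\mapsto {\rm e}^{Kv^2}-1$ from $\EPO$ into $L^q$ (the ingredient behind Proposition \ref{step1}); ``strong continuity of the semigroup'' alone does not give it, because uniform exponential integrability is not stable under $\EP$-perturbations unless the perturbation is small relative to $K^{-1/2}$, which is precisely the point you would need to argue. Shrinking $T$ does not by itself ``keep the iterates in a neighbourhood on which the exponential is integrable''.

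The second, more serious gap is uniqueness. Banach's fixed point theorem gives uniqueness only in your ball around the free evolution, while the theorem claims uniqueness of the weak solution in the whole class $C([0,T];\EPO)$. For exponential nonlinearities this unconditional uniqueness is not automatic (non-uniqueness phenomena are known in closely related settings), and the paper devotes a separate argument to it: assuming two solutions coincide up to a maximal time $t_0<T$, it restarts at $t_0$ and proves a contraction-type estimate $\sup_{0<s<t}\|\tilde u-\tilde v\|_{\EP}\le C(t^{1-\frac{N}{4p}}+t)\sup_{0<s<t}\|\tilde u-\tilde v\|_{\EP}$, where the exponential weights are bounded uniformly in time via Proposition \ref{step1} because \emph{both} solutions lie in $C([0,T];\EPO)$. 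Your proposal does not address this step at all, so as it stands it proves a weaker statement than the theorem. Filling it in would require an argument of this type (or an a priori proof that every weak solution in $C([0,T];\EPO)$ with the same datum stays in your ball for short time), neither of which is immediate from the fixed point construction.
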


Our second interest is the global existence. This depends on the behavior of the nonlinearity $f(u)$ near $u=0.$ The following behavior near $0$ will be allowed
$$|f(u)|\sim |u|^m,$$
where $m\geq 2$, $N(m-1)/4\geq 2.$  In fact, by Fujita-type results, it is known that for $N(m-1)/4<1$ blow-up occurs. See \cite{egorov, GalakMitidieri} and references therein. In addition,  in the framework of  $\exp L^2(\R^N)$ and using the embedding of Orlicz space in Lebesgue space (see Lemma \ref{sarah55} below), imposes that $N(m-1)/4\geq 2.$ More precisely, we suppose that the nonlinearity $f$ satisfies
\begin{equation}\label{1.4}
f(0)=0,\qquad|f(u)-f(v)|\leq C\left|u-v\right|\bigg(|u|^{m-1}{\rm e}^{\lambda u^2}+|v|^{m-1}{\rm e}^{\lambda v^2}\bigg),\quad\forall\;u,\;v\in\R,
\end{equation}
where  $m\geq 1+{8\over N}$, $m\geq 2$, $C>0$ and $\lambda>0$ are constants. Our aim is to obtain global existence to the Cauchy problem \eqref{1.1} for small initial data in $\exp L^2(\R^N)$. We have obtained the following.
\begin{theorem}[{Global existence}]
\label{GE} Let $m$ be a real number such that $m\geq 1+{8\over N},$ $m\geq 2.$ Assume that the nonlinearity $f$ satisfies \eqref{1.4}. Then, there exists a positive constant $\varepsilon>0$ such that for every initial data $u_0\in \exp L^2(\R^N)$
with $\|u_0\|_{\exp L^2(\R^N)} \leqslant\varepsilon,$ there exists a weak-mild solution $u\in L^{\infty}(0,\infty;\exp L^2(\R^N))$
of the Cauchy problem \eqref{1.1} satisfying
\begin{equation} \label{1.8}
    \lim_{t\longrightarrow0}\|u(t)-{\rm e}^{-t\Delta^2}u_{0}\|_{\exp L^2(\R^N)}=0.
\end{equation}
Moreover, there exists a constant $C>0$ such that,
\begin{equation}
\label{Linfinibihar}
\|u(t)\|_p \leq\,C\,t^{-\sigma},\quad\forall\;t>0,
\end{equation}
where $$\sigma={1\over m-1}-{N\over 4p}>0,$$with ${N(m-1)\over 4}<p<\infty$ if $N\geq 8$ and $\max\left\{m,\;{2N(m-1)\over 8-N}\right\}\leq p\leq \infty$ if $N<8$.
\end{theorem}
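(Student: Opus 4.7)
The plan is to apply the Banach fixed-point theorem to the integral formulation \eqref{integral} in the complete metric space
$$Y=\Bigl\{\,u\in L^\infty((0,\infty);\EP(\R^N))\,:\,\|u\|_Y\leq M\,\Bigr\},\qquad \|u\|_Y:=\sup_{t>0}\|u(t)\|_{\EP}+\sup_{t>0}t^\sigma\|u(t)\|_p,$$
for $M$ and $\eps$ taken sufficiently small. Writing $\Phi(u)(t)={\rm e}^{-t\Delta^2}u_{0}+\int_{0}^{t}{\rm e}^{-(t-s)\Delta^2}f(u(s))\,ds$, the strategy is to show that $\Phi$ maps $Y$ into itself and is a strict contraction there. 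For the linear term I would use the $\EP$-boundedness of the biharmonic semigroup (established in Section~3 of the paper), the standard smoothing estimate $\|{\rm e}^{-t\Delta^2}\phi\|_p\leq C\,t^{-\frac{N}{4}(1/q-1/p)}\|\phi\|_q$, and the Orlicz embedding $\EP(\R^N)\hookrightarrow L^q(\R^N)$ for all $q\in[2,\infty)$; combined these yield $\|{\rm e}^{-t\Delta^2}u_{0}\|_p\leq C\,t^{-\sigma}\|u_{0}\|_{\EP}$, so that $\|\Phi(0)\|_Y\leq C\|u_{0}\|_{\EP}$.

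The heart of the argument is the nonlinear estimate, built on $|f(u)|\leq C|u|^m{\rm e}^{\lambda u^2}$, which follows from \eqref{1.4}. I would split this product via H\"older's inequality into a polynomial factor and an exponential factor. The polynomial factor is controlled in some $L^r$ by interpolating $u$ between $L^{p_0}\supset\EP$ (with $p_0\geq 2$) and $L^p$, producing the time decay $s^{-(m-1)\sigma}$ times a power of $\|u\|_{\EP}$. The exponential factor is kept uniformly bounded by exploiting smallness in $\EP$: if $\|u\|_{\EP}\leq\delta$ with $\lambda q\delta^2\leq 1$, the elementary inequalities $({\rm e}^a-1)^q\leq {\rm e}^{qa}-1$ and ${\rm e}^{\beta a}-1\leq\beta({\rm e}^a-1)$ (for $a\geq 0$, $q\geq 1$, $0<\beta\leq 1$) give $\|{\rm e}^{\lambda u^2}-1\|_q\leq(\lambda q\delta^2)^{1/q}$. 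Choosing H\"older exponents so that the integrand $(t-s)^{-\frac{N}{4}(1/r-1/p)}s^{-\beta}$ is integrable on $(0,t)$ with $\int_0^t(t-s)^{-\frac{N}{4}(1/r-1/p)}s^{-\beta}\,ds=C\,t^{-\sigma}$, one obtains $\|\Phi(u)(t)\|_p\leq C\|u_{0}\|_{\EP}t^{-\sigma}+CM^m t^{-\sigma}$. The same splitting, but using the $\EP$-boundedness of the semigroup in place of the smoothing to $L^p$, produces the companion bound $\|\Phi(u)(t)\|_{\EP}\leq C\|u_{0}\|_{\EP}+CM^m$. A verbatim repetition based on the full Lipschitz form of \eqref{1.4} yields $\|\Phi(u)-\Phi(v)\|_Y\leq CM^{m-1}\|u-v\|_Y$, so choosing $M$ (and hence $\eps$) small enough makes $\Phi$ a strict contraction. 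The unique fixed point is the desired weak-mild solution, and $\|u(t)\|_p\leq Ct^{-\sigma}$ holds by construction, proving \eqref{Linfinibihar}.

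The continuity at $t=0$ in \eqref{1.8} follows from the same nonlinear bound applied to the Duhamel integral in the $\EP$ norm, verifying $\|u(t)-{\rm e}^{-t\Delta^2}u_{0}\|_{\EP}=O(t^\gamma)$ for some $\gamma>0$. The main obstacle is the exponent bookkeeping: one must simultaneously choose a H\"older decomposition for $|u|^m{\rm e}^{\lambda u^2}$, an interpolation between $L^{p_0}$ and $L^p$ for the polynomial factor, an auxiliary index $q$ with $\lambda q\|u\|_{\EP}^2\leq 1$ for the exponential factor, and time exponents $a,b<1$ with $a+b=1+\sigma$ for the Duhamel integral, all arranged so that the emerging power of $t$ is exactly $-\sigma$. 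The restrictions $m\geq 1+8/N$, $m\geq 2$, and the explicit range of $p$ in the statement are precisely the algebraic constraints that make this juggling consistent while keeping $\sigma>0$.
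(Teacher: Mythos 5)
Your overall scaffolding (the space $Y_M$ with norm $\sup_t\|u(t)\|_{\exp L^2}+\sup_t t^\sigma\|u(t)\|_p$, the linear estimate $t^\sigma\|{\rm e}^{-t\Delta^2}u_0\|_p\leq C\|u_0\|_{\exp L^2}$, the H\"older/interpolation bookkeeping for the $L^p$-contraction, and the continuity at $t=0$) matches the paper's scheme. But there is a genuine gap at the step you dispose of in one sentence: the claim that ``the same splitting, but using the $\exp L^2$-boundedness of the semigroup in place of the smoothing to $L^p$, produces $\|\Phi(u)(t)\|_{\exp L^2}\leq C\|u_0\|_{\exp L^2}+CM^m$.'' This fails for two reasons. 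First, applying the $\exp L^2\to\exp L^2$ bound to the Duhamel integrand requires $f(u(s))\in\exp L^2(\R^N)$, which is in general false for $u\in\exp L^2(\R^N)$, even with small norm: for $u(x)\sim(-A\log|x|)^{1/2}$ near the origin one has $f(u)\sim|x|^{-\lambda A}(\log)^{m/2}$, and no function with a power singularity is exponentially square integrable, so $\|f(u(s))\|_{\exp L^2}$ is simply not available. Second, even if it were, plain boundedness of ${\rm e}^{-(t-s)\Delta^2}$ on $\exp L^2$ gives only $\int_0^t\|f(u(s))\|_{\exp L^2}\,ds$, which grows in $t$ unless the integrand decays at an integrable rate; your construction provides no such decay in the Orlicz norm.

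This is precisely where the paper's real work lies, and it is also the source of the dimension-dependent range of $p$ in the statement, which your proposal cannot explain. The paper proves the uniform-in-time Orlicz bound by measuring $f(u(s))$ in $L^1\cap L^q$ (via Lemma \ref{med}, which is the part you did anticipate) and then using the time-weighted smoothing estimate $\|{\rm e}^{-\tau\Delta^2}g\|_{\exp L^2}\leq\kappa(\tau)\|g\|_{L^1\cap L^q}$ of Corollary \ref{lemme estimates}, where the decisive point is $\kappa\in L^1(0,\infty)$; its integrability at infinity (rate $\tau^{-N/8}$) holds only for $N\geq 9$. For $N=8$ the paper must pass to the modified Orlicz function $\phi(s)={\rm e}^{s^2}-1-s^2$ (Corollary \ref{forn=8}) to recover an integrable kernel, and for $N\leq 7$ it abandons the $L^1$-based estimate altogether and combines the logarithmic smoothing bound of Proposition \ref{pre}(ii) with the $s^{-\sigma}$ decay of members of $Y_M$; it is exactly this last argument that forces the lower bound $p\geq \frac{2N(m-1)}{8-N}$ appearing in the theorem for $N<8$. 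A secondary remark: for the contraction in the metric $d$, your idea of keeping the exponential factor bounded through Lemma \ref{med} is a reasonable alternative to the paper's device of expanding ${\rm e}^{\lambda u^2}$ in a power series and summing with $k$-dependent interpolation exponents and Gamma-function asymptotics \eqref{gamma2}--\eqref{gamma3}; but without repairing the uniform $\exp L^2$ estimate of the Duhamel term, the fixed-point argument does not close.
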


Hereafter, $\|\cdot\|_p$ denotes the norm in the Lebesgue space $L^p(\R^N),\; 1\leq p\leq \infty.$ We mention that the assumption \eqref{1.4} on  the nonlinearity covers the cases
$$
f(u)=\pm |u|^{m-1}u\,{\rm e}^{u^2},\quad m\geq 1+{8\over N},\;m\geq 2.
$$

Let us now give an outline of the proofs. To prove Theorem \ref{local}, we decompose the initial data $u_0\in\exp L^2_0 (\R^N)$ into a small part in $\exp L^2(\R^N)$ and a smooth one.
This will be done using the density of $C^\infty_0(\R^N)$ in $\exp L^2_0(\R^N)$. First we solve the initial value problem with smooth initial data to obtain a local and bounded solution $v$.
Then we consider the perturbed equation satisfied by $w := u-v$ and with small initial data. This idea was introduced in \cite{Bou} and used in \cite{Kenig, APDE} for instance.
Our proof of Theorem \ref{GE} uses and improves some arguments of \cite{Ioku} and is done as follows. First, we establish a key $L^p-L^q$ estimate  on the biharmonic heat semi-group with a constant independent of $p$ and $q$ (see Proposition \ref{LPLQQ} below). This is obtained using a majorizing kernel of the fundamental solution of the linear biharmonic heat equation, since this last one is not positive. Then, we perform a fixed point argument in suitable complete metric space. This space allows us to obtain the estimate \eqref{Linfinibihar} immediately.

Using this approach, we can obtain similar estimates for the nonlinear heat equation
\begin{equation}\label{H}
\left\{\begin{array}{cc}
\partial_{t} u-\Delta u=f(u)  ,   \\
u(0,x)=u_{0}(x),
\end{array}
\right.
\end{equation}
where $u=u(t,x),\,t>0,\,x\in\R^N$ and $f$ satisfies
\begin{equation}\label{HH}
f(0)=0,\qquad |f(u)-f(v)|\leq C\left|u-v\right|\bigg(|u|^{\ell-1}{\rm e}^{\lambda u^2}+|v|^{\ell-1}{\rm e}^{\lambda v^2}\bigg),\quad\forall\;u,\;v\in\R,
\end{equation}
with ${N(\ell-1)\over 2}\geq 2$, $\ell\geq 2$, $C>0$ and $\lambda>0$ are constants.
It is known that if $u_0\in\exp L^2_0(\R^N)$, then \eqref{H} has a unique local solution (see \cite{IRT}).
It is also known that if  $\|u_0\|_{\exp L^2(\R^N)}$ is sufficiently small, the solution is global (see \cite{Ioku}).
We have the following result which can be seen as an improvement of \cite[Theorem 1.3, p. 1174]{Ioku}. In fact, in the following theorem, only the case $\ell=1+4/N,\; N\leq 4$ was considered in \cite{Ioku}.
\begin{theorem}\label{linfinheat}
Let $\ell$ be a real number such that $\ell\geq 1+{4\over N},$ $\ell\geq 2.$ Assume that the nonlinearity $f$ satisfies \eqref{HH}. Let $u_0\in\exp L^2_0(\R^N)$ with $\|u_0\|_{\exp L^2(\R^N)}$ sufficiently small, and $u\in C([0,\infty);\,\exp L^2_0(\R^N))$ be the global solution of \eqref{H}. Then we have the following
\begin{equation}
\label{Linfiniheat}
\|u(t)\|_p\leq\,C\,t^{-\sigma},\quad\forall\;t>0,
\end{equation}
where $$\sigma={1\over \ell-1}-{N\over 2p}>0,$$
 with  ${N(\ell-1)\over 2}<p<\infty$ if $N\geq 4$ and $\max\left\{\ell,\;{2N(\ell-1)\over 4-N}\right\}\leq p\leq \infty$ if $N<4$.
\end{theorem}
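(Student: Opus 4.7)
The proof mirrors that of Theorem \ref{GE}, with the biharmonic semigroup $e^{-t\Delta^2}$ replaced by the heat semigroup $e^{t\Delta}$. Since the Gaussian kernel is positive, the standard smoothing estimate
\begin{equation*}
\|e^{t\Delta}\phi\|_q\leq C\,t^{-\frac{N}{2}\left(\frac{1}{p}-\frac{1}{q}\right)}\|\phi\|_p,\qquad 1\leq p\leq q\leq\infty,
\end{equation*}
holds directly with a constant independent of $p$ and $q$, so no majorizing-kernel construction is needed. The exponent $N/2$ in place of $N/4$ accounts for the threshold shift $m\geq 1+8/N\mapsto \ell\geq 1+4/N$.

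\textbf{Setup.} For $u_0\in\exp L^2_0(\R^N)$ with sufficiently small norm and a fixed $p$ in the admissible range, I work in the complete metric space
\begin{equation*}
\C=\Bigl\{u\in L^\infty(0,\infty;\exp L^2(\R^N)) : \sup_{t>0}\|u(t)\|_{\exp L^2}\leq 2\|u_0\|_{\exp L^2},\ \sup_{t>0}t^\sigma\|u(t)\|_p\leq M\Bigr\},
\end{equation*}
endowed with the natural distance $d(u,v)=\sup_{t>0}\|u(t)-v(t)\|_{\exp L^2}+\sup_{t>0}t^\sigma\|u(t)-v(t)\|_p$, and apply Banach's fixed-point theorem to $\Phi(u)(t)=e^{t\Delta}u_0+\int_0^t e^{(t-s)\Delta}f(u(s))\,ds$. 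The linear term is controlled via contractivity of $e^{t\Delta}$ on $\exp L^2$ together with the Orlicz embedding $\exp L^2\hookrightarrow L^{q_0}$ at $q_0=N(\ell-1)/2\geq 2$ (the analogue of Lemma \ref{sarah55}) and the $L^{q_0}$-$L^p$ smoothing bound, producing $\|e^{t\Delta}u_0\|_p\leq C\,t^{-\sigma}\|u_0\|_{\exp L^2}$.

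\textbf{Nonlinear estimate and main obstacle.} The heart is the bound
\begin{equation*}
\Bigl\|\int_0^t e^{(t-s)\Delta}f(u(s))\,ds\Bigr\|_p\leq CM^\ell\,t^{-\sigma}.
\end{equation*}
Using \eqref{HH} to get $|f(u)|\leq C|u|^\ell e^{\lambda u^2}$, applying the smoothing estimate with exponents $(q,p)$, and splitting by H\"older
\begin{equation*}
\|f(u)\|_q\leq \||u|^\ell\|_q+\||u|^\ell\|_{q_1}\,\|e^{\lambda u^2}-1\|_{q_2},\qquad \tfrac{1}{q}=\tfrac{1}{q_1}+\tfrac{1}{q_2},
\end{equation*}
the elementary inequality $(e^y-1)^k\leq e^{ky}-1$ for $y\geq 0$, $k\geq 1$, combined with the definition of the Luxemburg norm, gives $\|e^{\lambda u^2}-1\|_{q_2}\leq 1$ whenever $\lambda q_2\|u\|_{\exp L^2}^2\leq 1$, which is guaranteed by the smallness of $\|u_0\|_{\exp L^2}$ (note that one \emph{cannot} place $e^{\lambda u^2}$ itself in $L^{q_2}(\R^N)$, which is why the subtraction is essential). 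The $L^{\ell q_1}$ factors are handled via the decay built into $\C$ together with interpolation against the $\exp L^2$ bound. A beta-function computation $\int_0^t(t-s)^{-a}s^{-\ell\sigma}\,ds\asymp t^{1-a-\ell\sigma}$ then produces exactly $t^{-\sigma}$ when $q=p/\ell$, i.e.\ $a=N(\ell-1)/(2p)$. The main difficulty is that this ideal choice forces $q_2=\infty$; I circumvent this by taking $q_1$ slightly below $p/\ell$ (so $q_2<\infty$) and absorbing the small perturbation through interpolation with the $\exp L^2$ bound. The range hypotheses $p>N(\ell-1)/2$ for $N\geq 4$ and $\max\{\ell,\,2N(\ell-1)/(4-N)\}\leq p\leq\infty$ for $N<4$ are precisely what ensure the integrability conditions $a,\,\ell\sigma<1$ and the correct matching of exponents. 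The contraction estimate for $\Phi(u)-\Phi(v)$ follows by the same splitting applied to the full Lipschitz bound in \eqref{HH}, and uniqueness in $C([0,\infty);\exp L^2_0)$ identifies the fixed point with the global solution of \cite{IRT}, yielding \eqref{Linfiniheat}.
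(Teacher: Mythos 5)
Your strategy coincides with the paper's: Section 5.5 proves this theorem by rerunning the fixed-point argument of Theorem \ref{GE} for the heat semigroup in the space $Y_M=\{u:\ \sup_{t>0}t^{\sigma}\|u(t)\|_{p}+\|u\|_{L^{\infty}(0,\infty;\exp L^2)}\le M\}$, with $N/4$ replaced by $N/2$, and your treatment of the weighted $L^p$ bound for the Duhamel term (smoothing estimate with a constant uniform in the exponents, the splitting $|u|^{\ell}{\rm e}^{\lambda u^2}=|u|^{\ell}+|u|^{\ell}({\rm e}^{\lambda u^2}-1)$ with the Lemma \ref{med}-type bound $\|{\rm e}^{\lambda u^2}-1\|_{q_2}\le 1$, interpolation against the $\exp L^2$ bound to keep the time singularity integrable, and a beta-function identity matching the exponent to $t^{-\sigma}$) is the same computation, including your correct observation that the ``ideal'' exponent choice must be perturbed and absorbed by interpolation.

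Two obligations created by your own setup are, however, not discharged, and they are exactly where the remaining work of the paper's argument lives. First, your metric contains $\sup_{t>0}\|u(t)-v(t)\|_{\exp L^2}$, so Banach's theorem would require a contraction estimate for $\Phi(u)-\Phi(v)$ in $L^{\infty}(0,\infty;\exp L^2)$, which your sketch never provides; the paper sidesteps this by using only $d(u,v)=\sup_{t>0}t^{\sigma}\|u(t)-v(t)\|_{p}$ (completeness of $Y_M$ under this weaker metric follows from the lower semicontinuity in Proposition \ref{fatou}), and you should either do the same or supply the missing estimate. Second, for $\Phi$ to map your space into itself you must bound the Duhamel term in $L^{\infty}(0,\infty;\exp L^2)$ by $CM^{\ell}$, and this cannot be extracted from the $L^p$ computation you describe: it needs the estimate $\|{\rm e}^{t\Delta}g\|_{\exp L^2}\le\kappa(t)\|g\|_{L^1\cap L^q}$ with $\kappa$ integrable on $(0,\infty)$, i.e.\ the heat analog of Corollary \ref{lemme estimates} (valid only for $N\ge5$), the modified Orlicz space $\phi(u)={\rm e}^{u^2}-1-u^2$ as in Corollary \ref{forn=8} for $N=4$, and the logarithmic splitting \eqref{log}--\eqref{(4.31)} for $N\le 3$ (the case split $N\ge5$, $N=4$, $N\le3$ replacing $N\ge9$, $N=8$, $N\le7$). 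Saying the proof ``mirrors Theorem \ref{GE}'' implicitly imports these steps, but since your detailed sketch only covers the linear term and the weighted $L^p$ estimates, the $\exp L^2$ stability of the ball — the genuinely non-$L^p$ ingredient — is a gap as written.
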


The rest of this paper is organized as follows. In the next section, we collect some basic facts and useful tools about Orlicz spaces.  Section 3 is devoted to some crucial estimates on the biharmonic heat semi-group. The proof of Theorem \ref{local} is done in Section 4. In Section 5, we give the proof of Theorem \ref{GE}. We also give the proof of Theorem \ref{linfinheat} concerning the nonlinear heat equation \eqref{H}. Finally, in Section 6 we sketch briefly  how our results can be extended to { polyharmonic} heat equations with exponential nonlinearity.  In all this paper, $C$ will be a positive constant which  may have different values at different places. Also, $L^q(\R^N)$, $\exp L^2(\R^N)$, $\exp L^2_0(\R^N)$ will be written respectively $L^q$, $\EP$ and $\EPO$.

%%%%%%%%%%%%%%%%%%%%%%%%%%%%%%%%%%%%%%%%%%%%%%%%%%%%%%%%%%%%%%%%%%%%%%%%%%%%%%%%%%%%%%%%%%%%%%%ù%%%%%%%%%%%%%%%%%%%%%%%%%%%%%%%%%%
\section{Orlicz spaces: basic facts and useful tools}
%%%%%%%%%%%%%%%%%%%%%%%%%%%%%%%%%%%%%%%%%%%%%%%%%%%%%%%%%%%%%%%%%%%%%%%%%%%%%%%%%%%%%%%%%%%%%%%ù%%%%%%%%%%%%%%%%%%%%%%%%%%%%%%%%%%
Let us recall the definition of the so-called Orlicz spaces on $\R^N$ and some
related basic facts. For a complete presentation and more details, we refer the reader to
\cite{Adams, RR, Trud}.

\begin{definition}\label{deforl}\quad\\
Let $\phi : \R^+\to\R^+$ be a convex increasing function such that
$$
\phi(0)=0=\lim_{s\to 0^+}\,\phi(s),\quad
\lim_{s\to\infty}\,\phi(s)=\infty.
$$
We say that a  function $u\in L^1_{loc}(\R^N)$ belongs to
$L^\phi(\R^N)$ if there exists $\alpha>0$ such that
$$
\int_{\R^N}\,\phi\left(\frac{|u(x)|}{\alpha}\right)\,dx<\infty.
$$
We denote then
\begin{equation}
\label{Luxemb}
\|u\|_{L^\phi}=\inf\,\left\{\,\alpha>0,\quad\int_{\R^N}\,\phi\left(\frac{|u(x)|}{\alpha}\right)\,dx\leq
1\,\right\}.
\end{equation}
\end{definition}
It is known that $\left(L^\phi(\R^N),\|\cdot\|_{L^\phi}\right)$ is a Banach space. Note that, if $\phi(s)=s^p,\, 1\leq
p<\infty$,  then $L^\phi$ is nothing else than the  Lebesgue space $L^p$. Moreover, for $u\in L^\phi$ with $\|u\|_{L^\phi}>0$,
we have
$$\left\{\,\alpha>0,\quad\int_{\R^N}\,\phi\left(\frac{|u(x)|}{\alpha}\right)\,dx\leq
1\,\right\}=[\|u\|_{L^\phi}, \infty[\,. $$
In particular
\begin{equation}
\label{med1}
\int_{\R^N}\,\phi\left(\frac{|u(x)|}{\|u\|_{L^\phi}}\right)\,dx\leq 1.
\end{equation}
We also recall the following well known properties.
\begin{proposition} \label{fatou} We have
\begin{itemize}
\item[(i)] $L^1\cap L^\infty\subset L^\phi\subset L^1+L^\infty$.
\item[(ii)] {\it Lower semi-continuity}:
$$
u_n\to u\quad\mbox{a.e.}\quad\Longrightarrow\quad\|u\|_{L^\phi}\leq
\liminf\|u_n\|_{L^\phi}.
$$
\item[(iii)] {\it Monotonicity}:
$$
|u|\leq|v|\quad\mbox{
a.e.}\quad\Longrightarrow\quad\|u\|_{L^\phi}\leq\|v\|_{L^\phi}.
$$
\item[(iv)] {\it Strong Fatou property}:
$$
0\leq u_n\nearrow u\quad\mbox{
a.e.}\quad\Longrightarrow\quad\|u_n\|_{L^\phi}\nearrow\|u\|_{L^\phi}.
$$
\end{itemize}
\end{proposition}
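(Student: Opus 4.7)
The plan is to derive each of the four properties from the convexity and monotonicity of $\phi$, the normalization $\phi(0)=0$, and classical integration tools. The key preliminary observation I would invoke throughout is that convexity of $\phi$ combined with $\phi(0)=0$ makes $s\mapsto \phi(s)/s$ non-decreasing on $(0,\infty)$; this yields the two-sided comparison
\[
\phi(s)\leq \phi(1)\,s\quad\text{for}\ s\in[0,1],\qquad \phi(s)\geq \phi(1)\,s\quad\text{for}\ s\geq 1.
\]

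For part (i), I would prove the first embedding $L^1\cap L^\infty\subset L^\phi$ by choosing $\alpha\geq \|u\|_\infty$ so that $|u|/\alpha\leq 1$ pointwise; then the upper comparison gives $\int_{\R^N}\phi(|u|/\alpha)\,dx\leq (\phi(1)/\alpha)\,\|u\|_1<\infty$. For the reverse embedding $L^\phi\subset L^1+L^\infty$, I would split $u=u\mathbf{1}_{\{|u|\leq \alpha\}}+u\mathbf{1}_{\{|u|>\alpha\}}$ for some $\alpha>\|u\|_{L^\phi}$; the first piece trivially lies in $L^\infty$, and the lower bound $\phi(s)\geq \phi(1)\,s$ for $s\geq 1$ combined with $\int_{\R^N}\phi(|u|/\alpha)\,dx\leq 1$ (cf.~\eqref{med1}) provides an $L^1$ bound on the second piece.

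Part (ii) I would handle by Fatou's lemma: fix any $\alpha>\liminf\|u_n\|_{L^\phi}$, extract a subsequence $u_{n_k}$ with $\|u_{n_k}\|_{L^\phi}<\alpha$ so that $\int_{\R^N}\phi(|u_{n_k}|/\alpha)\,dx\leq 1$, use the a.e.\ continuity $\phi(|u_{n_k}|/\alpha)\to\phi(|u|/\alpha)$ coming from continuity of $\phi$, and apply Fatou to obtain $\int_{\R^N}\phi(|u|/\alpha)\,dx\leq 1$, hence $\|u\|_{L^\phi}\leq \alpha$; letting $\alpha\searrow\liminf\|u_n\|_{L^\phi}$ concludes. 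Part (iii) is immediate from the pointwise monotonicity $\phi(|u|/\alpha)\leq \phi(|v|/\alpha)$, which shows that every $\alpha$ admissible for $v$ in \eqref{Luxemb} is admissible for $u$. Part (iv) I would deduce directly from (ii) and (iii): by (iii), $(\|u_n\|_{L^\phi})_n$ is non-decreasing and bounded above by $\|u\|_{L^\phi}$, so it converges to some $L\leq \|u\|_{L^\phi}$, and (ii) forces $\|u\|_{L^\phi}\leq \liminf\|u_n\|_{L^\phi}=L$.

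I do not anticipate any real obstacle: (ii)--(iv) are essentially Fatou's lemma and direct pointwise comparisons. The only mildly delicate point is the careful use of the monotonicity of $\phi(s)/s$ in part (i), which must be invoked separately near zero (to absorb $\|u\|_\infty$ and obtain an $L^1$ bound on $\phi(|u|/\alpha)$) and near infinity (to extract an $L^1$ tail of $u$ from a finite Orlicz integral).
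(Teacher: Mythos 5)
Your proof is correct, and the argument you give is the standard one; note that the paper does not prove Proposition \ref{fatou} at all — it is recalled as a collection of well-known facts with the references \cite{Adams, RR, Trud}, so there is no in-paper proof to compare against. The only point worth flagging is your use of the lower bound $\phi(s)\geq\phi(1)\,s$ for $s\geq 1$ in the inclusion $L^\phi\subset L^1+L^\infty$: this needs $\phi(1)>0$, which holds for an increasing $\phi$ with $\phi(0)=0$ as in Definition \ref{deforl} (and certainly for the kernels ${\rm e}^{s^2}-1$, ${\rm e}^{s^2}-1-s^2$, $s^p$ used in the paper); if one wanted to allow $\phi$ vanishing on an interval, one would simply compare at some $s_0$ with $\phi(s_0)>0$ and split $u$ at the level $s_0\alpha$ instead of $\alpha$.
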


Denote by
$$
L_0^\phi(\R^N)=\bigg\{\,u\in L^1_{loc}(\R^N),\;\;\; \int_{\R^N}\,\phi\left(\frac{|u(x)|}{\alpha}\right)\,dx<\infty,\;\forall\;\;\alpha>0\bigg\}\,.
$$

It can be shown (see for example \cite{IRT}) that
$$
 L_0^\phi(\R^N)=\overline{C_0^\infty(\R^N)}^{L^\phi}= \mbox{the {closure} of $C_0^\infty(\R^N)$ in $L^\phi(\R^N)$}.
 $$
Clearly $L_0^\phi(\R^N)=L^\phi(\R^N)$ for $\phi(s)=s^p, \,p\geq 1$, but this is not the case for any $\phi$ (see \cite{IRT}). When $\phi(s)={\rm e}^{s^2}-1$, we denote the space $L^\phi(\R^N)$ by $\EP$ and $L^\phi_0(\R^N)$ by $\EPO$.

{ The following Lemma summarize the relationship between Orlicz and Lebesgue spaces.
\vspace{-0.3cm}
\begin{lemma}\label{Orl-Leb}  We have
\begin{itemize}
\item[(i)] $\EPO\varsubsetneq \EP $.
\item[(ii)] $\EPO\not\hookrightarrow L^\infty$, hence\;$\EP\not\hookrightarrow L^\infty.$
\item[(iii)] $\EP \not\hookrightarrow L^r$,\;\; for all\;\;$1\leq r<2$.
\item[(iv)] $L^2\cap L^\infty\hookrightarrow \EPO$. More precisely
\begin{equation}
\label{log2}
\|u\|_{\EP}\leq \frac{1}{\sqrt{\log 2}}\,\bigg(\|u\|_2+\|u\|_\infty\bigg).
\end{equation}
\end{itemize}
\end{lemma}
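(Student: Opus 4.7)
The plan is to tackle (iv) first, because its proof produces a quantitative bound from which (iii) follows automatically; (i) and (ii) then reduce to exhibiting explicit examples.

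For (iv), I would start from the elementary convexity inequality $e^{z}-1\le z/\log 2$ valid for $0\le z\le \log 2$ (immediate since $z\mapsto (e^z-1)/z$ is increasing and equals $1/\log 2$ at $z=\log 2$). Choosing $\alpha=(\|u\|_2+\|u\|_\infty)/\sqrt{\log 2}$, one has $\|u\|_\infty^2/\alpha^2\le\log 2$. Expanding $e^{u^2/\alpha^2}-1$ as a power series and using $|u(x)|^{2k}\le \|u\|_\infty^{2k-2}|u(x)|^2$ for $k\ge 1$ yields
\[
\int_{\R^N}\!\left(e^{u^2/\alpha^2}-1\right)dx \;\le\; \frac{\|u\|_2^2}{\|u\|_\infty^2}\Bigl(e^{\|u\|_\infty^2/\alpha^2}-1\Bigr)\;\le\; \frac{\|u\|_2^2}{\alpha^2\log 2}\;\le\; 1,
\]
which gives $\|u\|_{\EP}\le \alpha$, i.e.\ \eqref{log2}. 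The same computation run with \emph{arbitrary} $\alpha>0$ shows finiteness of the integral for every $\alpha$, hence $u\in\EPO$, establishing the embedding $L^2\cap L^\infty\hookrightarrow\EPO$.

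Item (iii) is then immediate from (iv): for $\beta>N/4$ the radial function $u(x)=(1+|x|^2)^{-\beta}$ lies in $L^2\cap L^\infty$, hence in $\EP$, while $u\notin L^r$ as soon as $\beta\le N/(2r)$; since $N/4<N/(2r)$ for $r<2$, any $\beta$ in the non-empty interval $(N/4,N/(2r)]$ does the job.

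For (ii), I would take $u(x)=(\log\log(1/|x|))^{1/2}\chi_{\{|x|\le 1/e\}}$, which is unbounded near $x=0$; its defining integral equals $\int_{|x|\le 1/e}\bigl((\log(1/|x|))^{1/\alpha^2}-1\bigr)dx$, finite for every $\alpha>0$ since $r^{N-1}(\log(1/r))^{s}$ is integrable near $0$ for every $s\ge 0$, so $u\in\EPO\setminus L^\infty$. For (i), the inclusion $\EPO\subset\EP$ is trivial from the definitions, whereas $u(x)=(\log(1/|x|))^{1/2}\chi_{\{|x|\le 1/2\}}$ satisfies $e^{u^2/\alpha^2}=|x|^{-1/\alpha^2}$ on its support, so its defining integral is finite iff $\alpha^2>1/N$, placing $u$ in $\EP\setminus\EPO$. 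The only delicate point is pinning down the sharp-looking constant $1/\sqrt{\log 2}$ in (iv); once one recognises that the choice $M=\log 2$ in the convexity bound $e^z-1\le (e^M-1)z/M$ yields the clean value $1/\log 2$, every other quantity is forced.
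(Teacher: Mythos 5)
Your proposal is correct, and for items (i) and (ii) it is essentially the paper's own argument: the same $(\log(1/|x|))^{1/2}$ example for $\EP\setminus\EPO$ and an iterated-logarithm example (the paper uses $(\log(1-\log|x|))^{1/2}$, you use $(\log\log(1/|x|))^{1/2}$) for unboundedness, both verified through the characterization \eqref{exp L0} of $\EPO$. Where you diverge is in (iii) and (iv). For (iv) the paper sums the interpolation bound $\|u\|_{2k}\le\|u\|_2^{1/k}\|u\|_\infty^{1-1/k}\le\|u\|_2+\|u\|_\infty$ over the series to get $\int(e^{u^2/\alpha^2}-1)\le e^{(\|u\|_2+\|u\|_\infty)^2/\alpha^2}-1$ and then imposes that this be $\le 1$; you instead factor pointwise via $|u|^{2k}\le\|u\|_\infty^{2k-2}|u|^2$ and finish with the convexity bound $e^z-1\le z/\log 2$ on $[0,\log 2]$. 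Both are elementary series estimates yielding the same constant in \eqref{log2}; your route in fact gives the slightly sharper bound $\|u\|_{\EP}\le\max(\|u\|_2,\|u\|_\infty)/\sqrt{\log 2}$ (modulo the trivial case $u\equiv 0$, where the division by $\|u\|_\infty$ should be sidestepped). For (iii) the paper exhibits the borderline tail $u(x)=|x|^{-N/r}\chi_{\{|x|\ge1\}}$ and checks membership in $\EPO$ directly by summing the series (this is where the restriction $r<2$ enters through the factors $2k-r$), whereas you recycle (iv): $(1+|x|^2)^{-\beta}$ with $\beta\in(N/4,\,N/(2r)]$ is in $L^2\cap L^\infty\subset\EPO$ but not in $L^r$, which avoids any Orlicz-norm computation and makes the logical dependence on (iv) explicit; both arguments even give the stronger non-embedding $\EPO\not\hookrightarrow L^r$.
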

\begin{proof}\quad\\
\vspace{-0.7cm}
\begin{itemize}
\item[(i)] Let $u$ be the function defined by
\begin{eqnarray*}
u(x)&=&\bigg(-\log|x|\bigg)^{1/2}\quad\mbox{if}\quad |x|\leq 1,\\
u(x)&=&0\qquad\mbox{if}\qquad |x|>1.
\end{eqnarray*}
For $\alpha>0$, we have
$$
\int_{\R^N}\,\bigg({\rm e}^{\frac{u(x)^2}{\alpha^2}}-1\bigg)\,dx<\infty\Longleftrightarrow \alpha>N^{-1/2}.
$$
Therefore $u\in \EP\backslash\EPO$.
\item[(ii)]  Let $u$ be the function defined by
\begin{eqnarray*}
u(x)&=&\bigg(\log\left(1-\log|x|\right)\bigg)^{1/2}\quad\mbox{if}\quad |x|\leq 1,\\
u(x)&=&0\qquad\mbox{if}\qquad |x|>1.
\end{eqnarray*}
Clearly $u\not\in L^\infty$. Moreover, for any $\alpha>0$, we have
$$
\int_{\R^N}\,\bigg({\rm e}^{\frac{u(x)^2}{\alpha^2}}-1\bigg)\,dx=|\mathcal{S}^{N-1}|\int_0^1\, r^{N-1}\bigg((1-\log r)^{1\over \alpha^2} -1\bigg)\,dr <\infty,
$$
where $|\mathcal{S}^{N-1}|$ is the measure of the unit sphere $\mathcal{S}^{N-1}$ in $\R^N$. The second assertion follows since  $\EPO \hookrightarrow\EP $.
\item[(iii)] Let $u$ be the function defined by
\begin{eqnarray*}
u(x)&=&|x|^{-{N\over r}}\quad\mbox{if}\quad |x|\geq 1,\\
u(x)&=&0\qquad\mbox{if}\qquad |x|<1.
\end{eqnarray*}
Then $u\in \EPO$ but $u\not\in L^r$. Indeed, it is clear that $u\not\in L^r$, and for $\alpha>0$, we have
$$
\int_{\R^N}\,\bigg({\rm e}^{\frac{u(x)^2}{\alpha^2}}-1\bigg)\,dx=\frac{N|\mathcal{S}^{N-1}|}{r}\,\sum_{k=1}^\infty\, \frac{1}{(2k-r)k!\alpha^{2k}}<\infty.
$$
\item[(iv)] Let $u\in L^2\cap L^\infty$ and let $\alpha>0$. Using the interpolation inequality
$$
\|u\|_{q}\leq \|u\|_{2}^{2/q}\,\|u\|_{\infty}^{1-{2/q}}\leq\|u\|_{2}+\|u\|_{\infty} ,\quad 2\leq q<\infty,
$$
we obtain
\begin{eqnarray*}
\int_{\R^N}\,\bigg({\rm e}^{\frac{u(x)^2}{\alpha^2}}-1\bigg)\,dx&=&\sum_{k=1}^\infty\,\frac{1}{k!\alpha^{2k}}\|u\|_{L^{2k}}^{2k}\\
&\leq& \sum_{k=1}^\infty\,\frac{1}{k!\alpha^{2k}}\left(\|u\|_{2}+\|u\|_{\infty}\right)^{2k}\\
&\leq&  {\rm e}^{\frac{\left(\|u\|_{2}+\|u\|_{\infty}\right)^2}{\alpha^2}}-1.
\end{eqnarray*}
This clearly implies \eqref{log2}.
\end{itemize}
\end{proof}

}

 We have the embedding: $\EP \hookrightarrow L^r$ for every $2\leq r<\infty$. More precisely:
\begin{lemma}({\cite{RT}})\label{sarah55} For every $2\leq r<\infty,$ we have
\begin{equation}\label{2.1}
    \|u\|_r\leqslant\left(\Gamma\left(\frac{r}{2}+1\right)\right)^\frac{1}{r}\|u\|_{\EP},
  \end{equation}
where $\Gamma(x):=\displaystyle\int_0^{\infty}\tau^{x-1}{\rm e}^{-\tau}\,d\tau, \; x>0.$
\end{lemma}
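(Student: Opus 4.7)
The plan is to reduce the claim to a one-variable pointwise inequality, then integrate and invoke the defining property of the Luxemburg norm. Concretely, I would first establish
\[
t^s \leq \Gamma(s+1)(e^t - 1) \quad\text{for every } t \geq 0 \text{ and every real } s \geq 1.
\]
Granting this, set $\alpha = \|u\|_{\EP}$ and $s = r/2$, apply the inequality pointwise with $t = |u(x)|^2/\alpha^2$, and integrate:
\[
\int_{\R^N} \frac{|u(x)|^r}{\alpha^r}\,dx \leq \Gamma\!\left(\tfrac{r}{2}+1\right) \int_{\R^N} \Bigl( e^{|u(x)|^2/\alpha^2} - 1\Bigr)\,dx \leq \Gamma\!\left(\tfrac{r}{2}+1\right),
\]
using \eqref{med1} in the last step. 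Taking $r$-th roots yields \eqref{2.1}.

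The scalar inequality is immediate for integer $s = k \geq 1$: since $e^t - 1 = \sum_{j \geq 1} t^j/j!$ has all non-negative terms, the single term $t^k/k! = t^s/\Gamma(s+1)$ already lies below the sum. For non-integer $s \geq 1$, I would set $\phi(t) = \Gamma(s+1)(e^t - 1) - t^s$, observe $\phi(0) = 0$, and prove $\phi'(t) = \Gamma(s+1) e^t - s t^{s-1} = s\bigl(\Gamma(s) e^t - t^{s-1}\bigr) \geq 0$ for all $t \geq 0$. Since an elementary calculus computation shows $\max_{t \geq 0} t^{s-1} e^{-t} = \bigl((s-1)/e\bigr)^{s-1}$ (attained at $t = s-1$), the non-negativity of $\phi'$ reduces to the Stirling-type bound
\[
\Gamma(s) \geq \bigl((s-1)/e\bigr)^{s-1}, \qquad s \geq 1.
\]

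I expect this last inequality to be the main technical obstacle. For integer $s = k + 1$ it is trivial from $e^k \geq k^k/k!$, which rearranges to $k! \geq (k/e)^k$. For general real $s$ one can invoke the classical lower Stirling estimate $\Gamma(s+1) \geq (s/e)^s$, or combine the integer case with the log-convexity of $\log\Gamma$ on the intervals $[k, k+1]$. Once this Stirling-type estimate is in hand, everything else is a clean pointwise comparison and a one-line integration against the Luxemburg-norm inequality \eqref{med1}.
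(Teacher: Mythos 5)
Your argument is correct. Note that the paper itself offers no proof of Lemma \ref{sarah55}: it is quoted from \cite{RT}, and the proof you give is essentially the standard one behind that citation — the pointwise bound $t^{s}\leq\Gamma(s+1)({\rm e}^{t}-1)$ for $t\geq 0$, $s=r/2\geq 1$, applied with $t=|u(x)|^{2}/\alpha^{2}$, $\alpha=\|u\|_{\EP}$, followed by integration and the Luxemburg-norm property \eqref{med1} (the case $\|u\|_{\EP}=0$ being trivial). Your reduction of the scalar inequality to $\Gamma(s)\geq\bigl((s-1)/e\bigr)^{s-1}$ is also sound, and the restriction $r\geq 2$ is exactly what makes the pointwise inequality valid (it fails for $s<1$ near $t=0$). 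Two small remarks. First, the fallback you mention, ``combine the integer case with the log-convexity of $\log\Gamma$ on $[k,k+1]$,'' goes the wrong way as stated: convexity of $\log\Gamma$ places it \emph{below} the chord between integer nodes, i.e.\ it yields upper bounds for $\Gamma$ there, so you would instead need the supporting-line (monotone slope) form of convexity to get a lower bound. Second, this is moot because the Stirling-type estimate you call the main obstacle has a one-line proof from the integral representation: for $x\geq 0$,
\begin{equation*}
\Gamma(x+1)=\int_{0}^{\infty}\tau^{x}{\rm e}^{-\tau}\,d\tau\;\geq\;\int_{x}^{\infty}\tau^{x}{\rm e}^{-\tau}\,d\tau\;\geq\;x^{x}\int_{x}^{\infty}{\rm e}^{-\tau}\,d\tau\;=\;\Bigl(\frac{x}{{\rm e}}\Bigr)^{x},
\end{equation*}
which with $x=s-1$ gives exactly $\Gamma(s)\geq\bigl((s-1)/e\bigr)^{s-1}$ and closes the proof of \eqref{2.1}.
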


The following lemma will be useful in the { proofs.}
\begin{lemma}
\label{med}
Let $\lambda>0$, $1\leq p<\infty$ and ${ K>0}$ such that $\lambda\,p\,{K}^2\leq 1$. Assume that
$$
\|u\|_{\EP}\leq {K}\,.
$$
Then
$$
\|{\rm e}^{\lambda u^2}-1\|_p\leq \left(\lambda\,p\,{K}^2\right)^{{1\over p}}\,.
$$
\end{lemma}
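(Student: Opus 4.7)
The plan is to reduce the $L^p$ estimate to an $L^1$-type estimate via an elementary pointwise inequality and then to exploit the Taylor series expansion together with the Orlicz-norm bound.

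\textbf{Step 1 (Pointwise reduction).} I would first establish the elementary inequality
\[
({\rm e}^{s}-1)^{p}\leq {\rm e}^{ps}-1,\qquad s\geq 0,\; p\geq 1.
\]
This follows from the differential inequality $\frac{d}{ds}\bigl[{\rm e}^{ps}-1-({\rm e}^{s}-1)^{p}\bigr]=p{\rm e}^{s}\bigl[{\rm e}^{(p-1)s}-({\rm e}^{s}-1)^{p-1}\bigr]\geq 0$ since ${\rm e}^{s}-1\leq {\rm e}^{s}$, together with the vanishing at $s=0$. Applying this with $s=\lambda u(x)^{2}$ yields
\[
\|{\rm e}^{\lambda u^{2}}-1\|_{p}^{p}=\int_{\R^{N}}\!\bigl({\rm e}^{\lambda u(x)^{2}}-1\bigr)^{p}dx\leq \int_{\R^{N}}\!\bigl({\rm e}^{p\lambda u(x)^{2}}-1\bigr)dx.
\]

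\textbf{Step 2 (Taylor expansion and Orlicz-norm input).} Expanding in Taylor series and inserting the factor $K^{2k}/K^{2k}$ I rewrite
\[
\int_{\R^{N}}\!\bigl({\rm e}^{p\lambda u^{2}}-1\bigr)dx=\sum_{k=1}^{\infty}\frac{(p\lambda K^{2})^{k}}{k!\,K^{2k}}\,\|u\|_{2k}^{2k}.
\]
Under the hypothesis $p\lambda K^{2}\leq 1$ we have $(p\lambda K^{2})^{k}\leq p\lambda K^{2}$ for every $k\geq 1$, so factoring this out gives
\[
\int_{\R^{N}}\!\bigl({\rm e}^{p\lambda u^{2}}-1\bigr)dx\leq p\lambda K^{2}\sum_{k=1}^{\infty}\frac{\|u\|_{2k}^{2k}}{k!\,K^{2k}}=p\lambda K^{2}\int_{\R^{N}}\!\bigl({\rm e}^{u^{2}/K^{2}}-1\bigr)dx.
\]

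\textbf{Step 3 (Conclusion via the definition of the Luxemburg norm).} Since $\|u\|_{\EP}\leq K$, the monotonicity property of Proposition \ref{fatou} (iii) applied to $\phi(s)={\rm e}^{s^{2}}-1$ (or directly \eqref{med1} combined with the fact that the defining set of $\alpha$'s is an interval $[\|u\|_{\EP},\infty)$) yields
\[
\int_{\R^{N}}\!\bigl({\rm e}^{u^{2}/K^{2}}-1\bigr)dx\leq 1.
\]
Combining Steps 1--3 gives $\|{\rm e}^{\lambda u^{2}}-1\|_{p}^{p}\leq p\lambda K^{2}$, which is the desired estimate after taking $p$-th roots.

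\textbf{Expected difficulty.} None of the steps is serious; the only subtle point is the pointwise inequality in Step 1, which replaces the absence of a triangle-type inequality for $({\rm e}^{\cdot}-1)^{p}$, and verifying that the hypothesis $p\lambda K^{2}\leq 1$ is used \emph{only} to bound $(p\lambda K^{2})^{k}\leq p\lambda K^{2}$ uniformly in $k$. The rest is bookkeeping with Taylor series and the definition of the Orlicz norm.
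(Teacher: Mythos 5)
Your proof is correct and follows essentially the same route as the paper: the pointwise inequality $({\rm e}^{\lambda u^2}-1)^p\le {\rm e}^{\lambda p u^2}-1$, factoring out $\lambda p K^2\le 1$ (you do this coefficient-wise in the Taylor series, while the paper does it pointwise via ${\rm e}^{\theta s}-1\le\theta({\rm e}^{s}-1)$ after replacing $u^2$ by $K^2 u^2/\|u\|_{\EP}^2$), and the Luxemburg-norm normalization \eqref{med1}. One small nit: Proposition \ref{fatou}~(iii) alone does not yield $\int_{\R^N}({\rm e}^{u^2/K^2}-1)\,dx\le 1$; the correct justification is your parenthetical one, namely \eqref{med1} together with the fact that the admissible set of $\alpha$'s is the interval $[\|u\|_{\EP},\infty)$, so that $K\ge\|u\|_{\EP}$ is admissible.
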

\begin{proof}
Write
\begin{eqnarray*}
\int_{\R^N}\,\left({\rm e}^{\lambda u^2}-1\right)^p\,dx &\leq& \int_{\R^N}\,\left({\rm e}^{\lambda\,p\,u^2}-1\right)\,dx\\
&\leq&\int_{\R^N}\,\left({\rm e}^{\lambda\,p\,{K}^2\frac{u^2}{\|u\|_{\exp L^2}^2}}-1\right)\,dx\\
&\leq&\,\lambda\,p\,{K}^2\,\int_{\R^N}\,\left({\rm e}^{\frac{u^2}{\|u\|_{\exp L^2}^2}}-1\right)\,dx\leq \,\lambda\,p\,{K}^2,
\end{eqnarray*}
where we have used the fact that ${\rm e}^{\theta s}-1\leq \theta\left({\rm e}^{s}-1\right)$, $0\leq \theta\leq 1$, $s\geq 0$ and \eqref{med1}.
\end{proof}

We state the following proposition which is needed for the local well-posedness in the space $\EPO$.
\begin{proposition}\label{step1} Let $u\in C([0,T]; \EPO)$. Assume that $f$ satisfies \eqref{1.9}. Then for every $2\leq p<\infty$ there holds
\begin{equation*}
    f(u)\in C\left([0, T]; L^p\right).
\end{equation*}
\end{proposition}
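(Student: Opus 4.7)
My plan is to establish the proposition in two steps: (a) for each $t \in [0, T]$ show that $f(u(t)) \in L^p$, and (b) show continuity of $t \mapsto f(u(t))$ into $L^p$. Both rely on four ingredients: the pointwise bounds $|f(w)| \leq C|w|\,e^{\lambda w^2}$ (take $v = 0$ in \eqref{1.9}) and the full Lipschitz bound in \eqref{1.9}; the elementary inequality $(e^{s} - 1)^{q} \leq e^{qs} - 1$ for $s \geq 0$ and $q \geq 1$ (a consequence of convexity of $t \mapsto t^q$); the embedding $\EPO \hookrightarrow L^{r}$ for $r \geq 2$ from Lemma \ref{sarah55}; and the characterization \eqref{exp L0}, which says that $\int(e^{\beta u^{2}} - 1)\,dx < \infty$ for every $\beta > 0$ whenever $u \in \EPO$.

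For step (a), raise $|f(u)| \leq C|u|\,e^{\lambda u^{2}}$ to the $p$-th power and write $e^{p\lambda u^{2}} = 1 + (e^{p\lambda u^{2}} - 1)$ to get
\[
\|f(u)\|_{p}^{p} \leq C^{p}\,\|u\|_{p}^{p} + C^{p}\,\bigl\| |u|^{p}\,(e^{p\lambda u^{2}} - 1)\bigr\|_{1}.
\]
The first summand is finite by Lemma \ref{sarah55}. For the second, Cauchy--Schwarz produces $\|u\|_{2p}^{p}\,\|e^{p\lambda u^{2}} - 1\|_{2}$; the $L^{2p}$ factor is again finite by Lemma \ref{sarah55}, and the elementary inequality gives $\|e^{p\lambda u^{2}} - 1\|_{2}^{2} \leq \int(e^{2p\lambda u^{2}} - 1)\,dx < \infty$ because $u \in \EPO$.

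For step (b), fix $t_0 \in [0,T]$ and put $v := u(t_0)$, $w := u(t)$; then $\|w - v\|_{\EP} \to 0$ as $t \to t_0$. The Lipschitz bound together with the same splitting yields an estimate of the form
\[
\|f(w) - f(v)\|_{p} \leq C\,\|w - v\|_{p} + C\,\|w - v\|_{2p}\,(J_w + J_v),
\]
where $J_z := \bigl(\int(e^{2p\lambda z^{2}} - 1)\,dx\bigr)^{1/2}$. By Lemma \ref{sarah55} the factors $\|w - v\|_p$ and $\|w - v\|_{2p}$ tend to zero, and $J_v$ is a fixed finite constant (since $v \in \EPO$). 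The crux is to show $J_w$ stays bounded as $t \to t_0$. I would achieve this by writing $w = v + r$ with $\|r\|_{\EP} \to 0$, using $w^{2} \leq 2v^{2} + 2r^{2}$ to bound $e^{2p\lambda w^{2}} \leq A B$ with $A := e^{4p\lambda v^{2}}$ and $B := e^{4p\lambda r^{2}}$, and invoking the algebraic identity $AB - 1 = (A - 1)(B - 1) + (A - 1) + (B - 1)$. The contributions involving $A$ are finite and independent of $t$ (apply Cauchy--Schwarz on the cross term together with $(A - 1)^{2} \leq A^{2} - 1$, then use $v \in \EPO$); the contributions involving $B$ are controlled by Lemma \ref{med}: once $\|r\|_{\EP}$ is small enough that $8p\lambda \|r\|_{\EP}^{2} \leq 1$, Lemma \ref{med} gives $\int(B - 1)\,dx$ and $\int(B^{2} - 1)\,dx$ both of order $\|r\|_{\EP}^{2}$, hence tending to zero. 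The main obstacle is exactly this uniform control of $J_w$: Lemma \ref{med} cannot be applied to $w$ itself because $\|w\|_{\EP}$ is comparable to $\|v\|_{\EP}$, which need not be small; exploiting the smallness of the perturbation $r = w - v$ through the multiplicative split $e^{2p\lambda w^{2}} \leq A B$ is the key technical point of the proof.
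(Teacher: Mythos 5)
Your argument is correct, and its opening estimate (Lipschitz bound \eqref{1.9}, the splitting ${\rm e}^{\lambda z^2}=1+({\rm e}^{\lambda z^2}-1)$, H\"older, and Lemma \ref{sarah55}) is exactly the paper's; the two proofs diverge only in how the exponential factor is controlled. The paper reduces everything to the statement that $\|{\rm e}^{\lambda u(t_n)^2}-1\|_{2p}\to\|{\rm e}^{\lambda u(t)^2}-1\|_{2p}$ when $u(t_n)\to u(t)$ in $\EP$, which it does not prove but imports from \cite{IRT} (Proposition 2.3 there). You instead prove directly the weaker fact that is actually needed, namely that $\int\big({\rm e}^{2p\lambda u(t)^2}-1\big)\,dx$ stays bounded as $t\to t_0$: writing $w=v+r$ with $\|r\|_{\EP}\to 0$, using $w^2\le 2v^2+2r^2$, the identity $AB-1=(A-1)(B-1)+(A-1)+(B-1)$, Cauchy--Schwarz with $(A-1)^2\le A^2-1$, the characterization \eqref{exp L0} of $\EPO$ for the $v$-terms, and Lemma \ref{med} for the small perturbation $r$. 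This makes the proposition self-contained within the paper's own toolbox, at the cost of being longer; the paper's route is shorter but rests on an external continuity result. Two harmless bookkeeping points: with your definition $J_z=\big(\int({\rm e}^{2p\lambda z^2}-1)\,dx\big)^{1/2}$, H\"older together with $({\rm e}^{s}-1)^{2p}\le {\rm e}^{2ps}-1$ gives the factor $J_z^{1/p}$ rather than $J_z$ (which changes nothing, since $J_z^{1/p}\le 1+J_z$ and only boundedness is used), and your step (a) is strictly speaking already contained in the continuity estimate of step (b) once $f(u(t_0))\in L^p$ is known for a single $t_0$.
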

\begin{proof}[Proof of Proposition \ref{step1}]
Fix $2\leq p<\infty$, $0\leq t\leq T$ and let $(t_n)\subset [0,T]$ such that $t_n\to t$. Using H\"older inequality, we obtain
\begin{eqnarray*}
\|f(u(t_n))-f(u(t))\|_p&\leq & 2C\|u(t_n)-u(t)\|_p+\\
&&C\||u(t_n)-u(t)|({\rm e}^{\lambda \,u(t_n)^2}-1+{\rm e}^{\lambda \,u(t)^2}-1)\|_p\\
&\leq&  2C\|u(t_n)-u(t)\|_p+C\|u(t_n)-u(t)\|_{{2p}}\times \\
&&\left(\|{\rm e}^{\lambda \,u(t_n)^2}-1\|_{{2p}}+\|{\rm e}^{\lambda \,u(t)^2}-1\|_{{2p}}\right)\\
&&\hspace{-3cm}\leq C\|u(t_n)-u(t)\|_{\EP}\left(1+\|{\rm e}^{\lambda \,u(t_n)^2}-1\|_{{2p}}+\|{\rm e}^{\lambda \,u(t)^2}-1\|_{{2p}}\right),
\end{eqnarray*}
where we have used Lemma \ref{sarah55} in the last inequality. From \cite{IJMS, IRT} (see Proposition 2.3 in \cite{IRT} for instance) we know that $\|{\rm e}^{\lambda \,u(t_n)^2}-1\|_{{2p}}\to \|{\rm e}^{\lambda \,u(t)^2}-1\|_{{2p}}$ as $n\to\infty$. It follows that $\|f(u(t_n))-f(u(t))\|_{p}\to 0$ which is the desired conclusion.
\end{proof}
\begin{remark}
{\rm The assumption $p\geq 2$ is crucial in order to apply Lemma \ref{sarah55}. We believe that the conclusion of Proposition \ref{step1} fails when $1\leq p<2$.}
\end{remark}

We close this section by recalling some properties of the functions $\Gamma$ and ${\mathcal{B}}$ given by
\begin{eqnarray*}
\Gamma(x)&=&\displaystyle\int_0^{\infty}\tau^{x-1}{\rm e}^{-\tau}\,d\tau, \; x>0.\\
 {\mathcal{B}}(x,y)&=&\int_0^1\tau^{x-1}(1-\tau)^{y-1}d\tau,\quad x,\; y>0.
 \end{eqnarray*}
  We have
\begin{equation}
\label{gamma4}
 {\mathcal{B}}(x,y)=\frac{\Gamma(x+y)}{\Gamma(x)\Gamma(y)},\; \forall\; x,\; y>0,
\end{equation}

\begin{equation}
\label{gamma1}
\Gamma(x)\geq C>0,\; \forall\; x>0,
\end{equation}

\begin{equation}
\label{gamma2}
\Gamma(x+1)\sim \left({x\over \rm{e}}\right)^x\, \sqrt{2\pi x}, \; \mbox{ as }\; x \to \infty,
\end{equation}
and
\begin{equation}
\label{gamma3}
\Gamma(x+1)\leq C x^{x+{1\over 2}},\; \forall \; x\geq 1.
\end{equation}

These estimates will be needed in the proof of Theorem \ref{GE}.

%%%%%%%%%%%%%%%%%%%%%%%%%%%%%%%%%%%%%%%%%%%%%%%%%%%%%%%%%%%%%%%%%%%%%%%%%%%%%%%%%%%%%%%%%%%%%%%ù%%%%%%%%%%%%%%%%%%%%%%%%%%%%%%%%%%

\section{Linear estimates}

%%%%%%%%%%%%%%%%%%%%%%%%%%%%%%%%%%%%%%%%%%%%%%%%%%%%%%%%%%%%%%%%%%%%%%%%%%%%%%%%%%%%%%%%%%%%%%%ù%%%%%%%%%%%%%%%%%%%%%%%%%%%%%%%%%%
In this section we establish some results needed for the proofs of the main theorems. We first establish some basic estimates for the linear biharmonic heat semigroup ${\rm e}^{-t\Delta^2}.$   We consider  the problem
\begin{equation}\label{4444444}
\left\{\begin{array}{cc}
\partial_{t} u=- \Delta^2 u,\; t>0,\; x\in \R^N,\\
u(0,x)=u_{0}(x).
\end{array}
\right.
\end{equation}
The solution of \eqref{4444444} can be written as a convolution:
\begin{equation*}
    u(t,x)=\bigr(E_t\star u_0\bigl)(x):=\bigr({\rm e}^{-t\Delta^2}u_0\bigl)(x),
\end{equation*}
where
\begin{equation*}
   E_t(x)=\frac{1}{(2\pi)^N}\int_{\R^N}{\rm e}^{- t|\xi|^4}\,{\rm e}^{{\rm i}x\cdot\xi}\,d\xi,
\end{equation*}
is the biharmonic heat kernel.  Clearly
$$
E_t(x)=t^{-{N\over 4}}\,E_1(t^{-{1\over 4}}x).
$$
It is known that $E_1$ is given by
 $$E_1(x)=|x|^{1-N}\int_0^\infty (|x|s)^{N/2}J_{(N-2)/2}(|x|s){\,{\rm e}^{-s^4}}\,ds,$$
where $J_{\nu}$ denotes the $\nu$-th Bessel function of the first kind. See \cite{Gazz}  and references therein. This implies in particular that the kernel $E$  changes sign, therefore, the
associated semigroup is not order-preserving. This is different from the case of the heat equation. In fact, if $u_0\in C(\R^N)\cap L^\infty(\R^N),$ the unique global solution of \eqref{4444444} can be written as follows
\begin{equation*}
u(t,x)={t^{-\frac{N}{4}}\int_{\R^N} E_1\left(t^{-\frac{1}{4}}y\right)}u_0(x-y)dy.
\end{equation*}

We will frequently use the $L^p-L^q$ estimate as stated in the proposition below.
\begin{proposition}\label{LPLQQ} There exists a positive constant $ \mathcal{H}$ such that for all $1\leq p\leq q\leq\infty$, we have
\begin{equation}\label{x}
\|{\rm e}^{-t\Delta^2}\varphi\|_{q}\leqslant \mathcal{H}  t^{-\frac{N}{4}(\frac{1}{p}-\frac{1}{q})}\|\varphi\|_{p},\qquad\,\, \forall\; t>0,\,\, \forall\; \varphi\in L^p.
\end{equation}
\end{proposition}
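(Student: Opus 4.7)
The plan is to reduce inequality \eqref{x} to Young's convolution inequality combined with a scaling argument and a pointwise majorizing kernel for $E_1$.

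Since the solution is given by $u(t,x) = (E_t * \varphi)(x)$ and the exponents satisfy $1 \leq p \leq q \leq \infty$, Young's inequality with $r \in [1,\infty]$ defined by $\tfrac{1}{r} = 1 + \tfrac{1}{q} - \tfrac{1}{p}$ yields
\begin{equation*}
\|{\rm e}^{-t\Delta^2}\varphi\|_q \leq \|E_t\|_r \, \|\varphi\|_p.
\end{equation*}
Using the scaling identity $E_t(x) = t^{-N/4} E_1(t^{-1/4} x)$, a change of variable gives $\|E_t\|_r = t^{-\frac{N}{4}(\frac{1}{p}-\frac{1}{q})} \|E_1\|_r$, so it suffices to show that $\|E_1\|_r$ is bounded by a single constant independent of $r \in [1,\infty]$. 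By the elementary interpolation inequality $\|E_1\|_r \leq \max(\|E_1\|_1, \|E_1\|_\infty)$, valid for all $r \in [1,\infty]$ whenever $E_1 \in L^1 \cap L^\infty$, the whole problem reduces to proving $E_1 \in L^1(\R^N) \cap L^\infty(\R^N)$ with quantitative bounds.

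The crux of the argument, and the main obstacle, is the sign-changing, oscillatory nature of $E_1$. One cannot simply integrate $|E_1|$ out of the Fourier representation, since $E_1$ is not positive and mass is not preserved. I would overcome this by establishing the pointwise majorant
\begin{equation*}
|E_1(x)| \leq C \, {\rm e}^{-c |x|^{4/3}}, \quad x \in \R^N,
\end{equation*}
for some positive constants $C$ and $c$. The exponent $4/3$ is dictated by the saddle point of the phase ${\rm i}\, x\cdot\xi - |\xi|^4$ in the Fourier representation of $E_1$: the critical frequency scales like $|\xi| \sim |x|^{1/3}$, producing a phase value of order $-|x|^{4/3}$. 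In practice one may derive this bound either by a complex contour deformation applied directly to the Fourier integral, or by starting from the Bessel representation displayed before the proposition and applying stationary phase / steepest descent to the resulting one-dimensional integral. A boundedness estimate on a neighborhood of the origin is obtained trivially from $\|E_1\|_\infty \leq (2\pi)^{-N} \int_{\R^N} {\rm e}^{-|\xi|^4}\, d\xi < \infty$, so only the large-$|x|$ regime requires the saddle point analysis.

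Once the majorant is in hand, both $\|E_1\|_1$ and $\|E_1\|_\infty$ are finite, and combining Young's inequality with the scaling identity above proves \eqref{x} with $\mathcal{H} := \max(\|E_1\|_1, \|E_1\|_\infty)$, a constant independent of $p$ and $q$. This uniformity in the exponents is precisely what the subsequent global existence argument requires when it sums over $L^p$ norms at different indices.
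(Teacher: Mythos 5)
Your argument is essentially the paper's own: Young's inequality, the scaling identity $E_t(x)=t^{-N/4}E_1(t^{-1/4}x)$, and an exponential majorant $|E_1(x)|\leq C\,{\rm e}^{-c|x|^{4/3}}$ controlling $\|E_1\|_r$ uniformly in $r$ by interpolation between $L^1$ and $L^\infty$, giving a constant independent of $p$ and $q$. The only difference is that the paper simply quotes this majorizing-kernel bound from Galaktionov--Pohozaev (Proposition \ref{abir2}) rather than re-deriving it by steepest descent as you sketch, so your proposal is correct and follows the same route.
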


\begin{remarks} \quad\\
\vspace{-0.5cm}
{\rm
\begin{itemize}
\item[1)] The fact that the constant $ \mathcal{H}$ does not depend on $p$ and $q$ is crucial in our approach. In fact in \cite{W, SMG} the estimate \eqref{x} was obtained {for $1<p\leq q<\infty$ and } with a constant depending on $p$ and $q.$  Although in these references the estimates are for bounded domains they are valid for the hole space. See \cite[Lemma 4.1, p. 293]{W} and \cite[Lemma 2.1 , p. 7264 ]{SMG}.   This is not helpful for our case and the constant should be independent of $p$ and $q.$
\item[2)] For the standard linear heat  equation, it is known that we may take the constant smaller than $1$ and hence independent of $p$ and $q.$
\item[3)] It seems that for $p=q$ we may take the constant in \eqref{x} smaller than $1.$ See \cite[Formula (2.4)]{IKK}. Since this is not important for our proofs, we do not use it.
\end{itemize}
}
\end{remarks}

To prove Proposition \ref{LPLQQ} we need the following proposition which gives an exponential decay of the biharmonic heat kernel.

\begin{proposition}(\cite[Proposition 2.1]{GP}) \label{abir2} There exist two constants $\mathcal{K} > 1$ and $\mu>0$ such that
\begin{equation*}
    |E_1(\eta)|\leq \mathcal{K}\, F(\eta), \; \forall\; \eta\in \R^N,
\end{equation*}
where
$$F(\eta)=\omega {\rm e}^{-\mu|\eta|^{4/3}}, \, \eta \in\,\, \R^N,\quad \omega^{-1}=\int_{\R^N}{\rm e}^{-\mu|\eta|^{4/3}}d\,\eta.
$$
\end{proposition}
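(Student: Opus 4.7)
The plan is to bound the biharmonic heat semigroup by a genuine convolution with a positive Gaussian-like kernel of unit $L^1$ mass, and then invoke Young's convolution inequality. The crucial point is that the optimal constant in Young's inequality is $1$, so the only $p,q$-dependence comes through the $L^r$-norm of the majorizing kernel, which I must show is uniformly bounded for $r\in[1,\infty]$.

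Concretely, I will start from the self-similar form $E_t(x)=t^{-N/4}E_1(t^{-1/4}x)$, which together with Proposition \ref{abir2} gives
\begin{equation*}
|E_t(x)|\;\leq\;\mathcal{K}\,t^{-N/4}\,F(t^{-1/4}x)\;=:\;\mathcal{K}\,F_t(x),
\end{equation*}
with $F_t\geq 0$ and $\int_{\R^N}F_t\,dx=\int_{\R^N}F\,d\eta=1$ by the choice of $\omega$. Thus $|{\rm e}^{-t\Delta^2}\varphi|\leq \mathcal{K}(F_t\ast|\varphi|)$ pointwise. For $1\leq p\leq q\leq\infty$, defining $r\in[1,\infty]$ by $\tfrac{1}{r}=1+\tfrac{1}{q}-\tfrac{1}{p}$ (note $r\geq 1$ precisely because $p\leq q$), Young's inequality yields
\begin{equation*}
\|{\rm e}^{-t\Delta^2}\varphi\|_{q}\;\leq\;\mathcal{K}\,\|F_t\|_{r}\,\|\varphi\|_{p}.
\end{equation*}
A direct change of variables $y=t^{-1/4}x$ gives $\|F_t\|_{r}=t^{-\frac{N}{4}(1-1/r)}\|F\|_{r}=t^{-\frac{N}{4}(1/p-1/q)}\|F\|_{r}$, which is exactly the scaling exponent claimed in \eqref{x}.

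It remains to show that $\sup_{r\in[1,\infty]}\|F\|_{r}<\infty$, and here the special form $F(\eta)=\omega\,{\rm e}^{-\mu|\eta|^{4/3}}$ pays off. The endpoints are explicit: $\|F\|_{1}=1$ by construction, and $\|F\|_{\infty}=\omega$. For intermediate $r$, I can either interpolate by log-convexity, obtaining $\|F\|_{r}\leq\|F\|_{1}^{1/r}\|F\|_{\infty}^{1-1/r}\leq\max(1,\omega)$, or compute directly via $y=r^{3/4}\eta$ to get $\|F\|_{r}=\omega^{1-1/r}\,r^{-3N/(4r)}$, which is bounded on $[1,\infty]$ since $r^{-1/r}\in[e^{-1/e},1]$. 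Either way $\mathcal{H}:=\mathcal{K}\,\sup_{r\in[1,\infty]}\|F\|_{r}$ is a finite constant independent of $p,q$, completing the proof.

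The only subtlety is the uniform bound in $r$: this is what forces the use of the stretched-exponential majorant $F$ rather than some crude bound, and it is the reason the authors stress that the constant in the quoted references \cite{W,SMG} depends on $p,q$ whereas here it does not. Once this bound is established, Young's inequality and the self-similar scaling handle everything routinely.
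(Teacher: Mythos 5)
Your proposal does not actually prove the statement in question. The statement is Proposition \ref{abir2} itself, i.e.\ the pointwise stretched-exponential bound $|E_1(\eta)|\leq \mathcal{K}\,\omega\,{\rm e}^{-\mu|\eta|^{4/3}}$ on the biharmonic heat kernel. Your very first substantive step is ``which together with Proposition \ref{abir2} gives\dots'', so you assume the result you were asked to establish and then go on to derive the $L^p$--$L^q$ smoothing estimate of Proposition \ref{LPLQQ}. That derivation is fine (and is essentially identical to the paper's proof of Proposition \ref{LPLQQ}: Young's inequality, self-similar scaling, and the uniform-in-$r$ bound $\|F\|_r\leq \omega^{1-1/r}\leq 1+\omega$), but it is an application of the proposition, not a proof of it. The paper itself offers no proof either; it quotes the bound from \cite[Proposition 2.1]{GP}.

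What is actually missing is any argument for the decay of the oscillatory integral
$E_1(x)=(2\pi)^{-N}\int_{\R^N}{\rm e}^{-|\xi|^4}\,{\rm e}^{{\rm i}x\cdot\xi}\,d\xi$.
Unlike the Gaussian case, this kernel is sign-changing and has no closed form; the claimed decay rate ${\rm e}^{-\mu|x|^{4/3}}$ (the exponent $4/3=2m/(2m-1)$ with $m=2$) is genuinely anomalous and must be extracted by deforming the integration contour into the complex domain (a saddle-point/steepest-descent analysis of $-|\xi|^4+{\rm i}x\cdot\xi$), or by the majorizing order-preserving kernel construction of Galaktionov--Pohozaev. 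None of this appears in your write-up. If your intent was to prove Proposition \ref{LPLQQ} taking the kernel bound as given, say so explicitly; as a proof of Proposition \ref{abir2} the argument is circular.
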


\begin{proof}[Proof of Proposition \ref{LPLQQ}] Let $1\leq p\leq q \leq \infty,\; t>0$ and $\varphi\in L^p.$ Using Young inequality, we have
$$\|{\rm e}^{-t\Delta^2}\varphi\|_{q}=\|E_t{\star }\varphi\|_{q}\leq \|E_t\|_{r}\|\varphi\|_{p},$$
where
$$\frac{1}{q}+1=\frac{1}{p}+\frac{1}{r}.$$
Since
$E_t(x)=t^{-\frac{N}{4}}E_1\left(t^{-\frac{1}{4}}x\right),$ we get
$$
\|E_t\|_{r}=t^{-\frac{N}{4}(1-\frac{1}{r})}\|E_1\|_{r}=t^{-\frac{N}{4}({1\over p}-{{1\over q}})}\|E_1\|_{r}.
$$
Using Proposition \ref{abir2}, we obtain
$$
   \|E_1\|_{r}\leq \mathcal{K} \|F\|_{r}.
$$
By interpolation and since  $ \|F\|_{\infty}\leq \omega,\;  \|F\|_{1}\leq 1,$ we obtain
$$\|F\|_{r}\leq \omega^{1-{1\over r}}\leq 1+\omega.$$
Finally,
$$\|{\rm e}^{-t\Delta^2}\varphi\|_{q}\leq \mathcal{K}(1+\omega)t^{-\frac{N}{4}({1\over p}-{{1\over q}})}\|\varphi\|_{p}.$$
This finishes the proof of Proposition \ref{LPLQQ} with $\mathcal{H}=\mathcal{K}(1+\omega).$
\end{proof}

The following Proposition is a generalization of \cite[Lemma 2.2, {p.} 1176]{Ioku} to the biharmonic operator.

\begin{proposition}\label{pre} Let $1\leqslant p\leqslant 2,\,\,1\leqslant q\leqslant\infty.$ Then the following estimates hold:
\begin{enumerate}
  \item [(i)] $\|{\rm e}^{-t\Delta^2}\varphi\|_{\EP}\leqslant   \mathcal{H}\|\varphi\|_{\EP},\; \forall\; t>0,$\; $\forall\; \varphi\in\EP.$
  \item [(ii)] $\|{\rm e}^{-t\Delta^2}\varphi\|_{\EP}\leqslant  \mathcal{H}\,t^{\frac{-N}{4p}}\left(\log(t^{\frac{-N}{4}}+1)\right)
    ^{-\frac{1}{2}} \|\varphi\|_{p},\; \forall\; t>0,$\; $\forall\; \varphi\in L^p.$
  \item [(iii)] $ \|{\rm e}^{-t\Delta^2}\varphi\|_{\EP}\leqslant  {\mathcal{H}\over \sqrt{\log 2}}\,\left[\,t^{\frac{-N}{4q}}\|\varphi\|_{q}+\|\varphi\|_{2}\right], \; \forall\; t>0,$ $\forall\; \varphi\in L^q\cap L^2.$
\end{enumerate}
\end{proposition}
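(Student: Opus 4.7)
The plan is to derive each of the three estimates by combining the $L^p$--$L^q$ smoothing of Proposition~\ref{LPLQQ} (together with the exponential kernel domination from Proposition~\ref{abir2}) with a different structural property of the Luxemburg norm: Jensen's inequality for (i), a Taylor expansion of the Orlicz integrand for (ii), and the Lebesgue embedding of Lemma~\ref{Orl-Leb}(iv) for (iii).

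For (i), I would start from the pointwise bound $|E_t(x)|\leq\mathcal{K}\,F_t(x)$, where $F_t(x)=t^{-N/4}F(t^{-1/4}x)$ is a probability density since $\int F=1$ by construction. The monotonicity of the Luxemburg norm (Proposition~\ref{fatou}(iii)) reduces the claim to bounding $\|F_t\ast|\varphi|\|_{\EP}$ by $\|\varphi\|_{\EP}$. Applying Jensen's inequality with the convex map $s\mapsto{\rm e}^{s^2/\alpha^2}-1$ and the probability measure $F_t(y)\,dy$ gives
\begin{equation*}
\int_{\R^N}\left({\rm e}^{(F_t\ast|\varphi|)(x)^2/\alpha^2}-1\right)dx\leq\int_{\R^N}F_t(y)\int_{\R^N}\left({\rm e}^{|\varphi(x-y)|^2/\alpha^2}-1\right)dx\,dy,
\end{equation*}
which, by Fubini and \eqref{med1}, is at most $1$ whenever $\alpha\geq\|\varphi\|_{\EP}$. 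Taking the infimum yields (i) with constant $\mathcal{K}\leq\mathcal{H}$.

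For (ii), the idea is to expand the Orlicz integrand as a power series. For $u={\rm e}^{-t\Delta^2}\varphi$, Proposition~\ref{LPLQQ} gives $\|u\|_{2k}\leq\mathcal{H}\,t^{-\frac{N}{4}(\frac{1}{p}-\frac{1}{2k})}\|\varphi\|_p$, valid for every $k\geq 1$ since $p\leq 2\leq 2k$. Summing this against the Taylor coefficients yields
\begin{equation*}
\int_{\R^N}\left({\rm e}^{u(x)^2/\alpha^2}-1\right)dx=\sum_{k=1}^{\infty}\frac{\|u\|_{2k}^{2k}}{k!\,\alpha^{2k}}\leq t^{N/4}\left({\rm e}^{\mathcal{H}^2\|\varphi\|_p^2/(\alpha^2 t^{N/(2p)})}-1\right).
\end{equation*}
Requiring the right-hand side to be $\leq 1$ and inverting the exponential produces exactly the factor $t^{-N/(4p)}\bigl(\log(t^{-N/4}+1)\bigr)^{-1/2}$.

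For (iii), I would simply invoke the embedding $L^2\cap L^\infty\hookrightarrow\EP$ from Lemma~\ref{Orl-Leb}(iv), then bound $\|{\rm e}^{-t\Delta^2}\varphi\|_2\leq\mathcal{H}\|\varphi\|_2$ and $\|{\rm e}^{-t\Delta^2}\varphi\|_\infty\leq\mathcal{H}\,t^{-N/(4q)}\|\varphi\|_q$ via Proposition~\ref{LPLQQ}. The main obstacle is part (ii): the series summation genuinely exploits the uniformity of the constant in Proposition~\ref{LPLQQ} (its independence of the integrability exponent), as emphasized in the remarks following that proposition --- without that uniformity the geometric series would fail to telescope into an exponential, and the logarithmic correction $\bigl(\log(t^{-N/4}+1)\bigr)^{-1/2}$ arises precisely from inverting $\alpha\mapsto{\rm e}^{c/\alpha^2}-1$ against $t^{-N/4}$.
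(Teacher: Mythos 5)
Your proposal is correct, and for parts (ii) and (iii) it coincides with the paper's argument: the same Taylor expansion of the Orlicz integrand combined with the $L^p$--$L^{2k}$ bounds of Proposition~\ref{LPLQQ} (with the constant independent of $k$, exactly as you stress) and the same inversion giving the logarithmic factor, and for (iii) the same embedding $L^2\cap L^\infty\hookrightarrow\EP$ followed by the $L^q$--$L^\infty$ and $L^2$--$L^2$ smoothing estimates. Where you genuinely diverge is part (i): the paper again expands $\int\bigl(\exp\bigl(\frac{{\rm e}^{-t\Delta^2}\varphi}{\alpha}\bigr)^2-1\bigr)dx$ into $\sum_k \|{\rm e}^{-t\Delta^2}\varphi\|_{2k}^{2k}/(k!\alpha^{2k})$ and applies the $L^{2k}$--$L^{2k}$ estimate termwise, so it only uses the operator bounds of Proposition~\ref{LPLQQ} and never touches the kernel again; you instead go back to the pointwise domination $|E_t|\le\mathcal{K}F_t$ of Proposition~\ref{abir2}, observe that $F_t$ is a probability density, and apply Jensen's inequality with the convex map $s\mapsto{\rm e}^{s^2/\alpha^2}-1$ together with Tonelli and \eqref{med1}. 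Your route is valid (convexity, positivity of $F_t$, and homogeneity plus monotonicity of the Luxemburg norm are all in order) and it even yields the slightly sharper constant $\mathcal{K}\le\mathcal{H}=\mathcal{K}(1+\omega)$; what it costs is that it relies on the order-preserving majorizing kernel rather than only on the $L^r$--$L^r$ bounds, whereas the paper's expansion argument would survive in any setting where one merely has a uniform $L^{2k}$--$L^{2k}$ estimate for the semigroup. Both (i)-arguments are equally rigorous, so this is a stylistic rather than substantive difference; the one point worth making explicit in your write-up is the step $\mathcal{K}\le\mathcal{H}$, so that the stated inequality with $\mathcal{H}$ follows from your sharper one.
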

\begin{proof} We begin by proving (i). For any $\alpha>0,$  expanding the exponential function leads to
\begin{eqnarray*}
 \int_{\R^N}\bigg(\exp\left(\frac{{\rm e}^{-t\Delta^2}\varphi}{\alpha}\right)^2-1\bigg)dx &=& \sum_{k=1}^{\infty}
\frac{\|{\rm e}^{-t\Delta^2}\varphi\|^{2k}_{{2k}}}{k! \alpha^{2k}}.
\end{eqnarray*}
Then by the $L^{2k}-L^{2k}$ estimate of the biharmonic semi-group \eqref{x}, we obtain
\begin{eqnarray*}
\int_{\R^N}\bigg(\exp\left(\frac{{\rm e}^{-t\Delta^2}\varphi}{\alpha}\right)^2-1\bigg)dx
&\leqslant & \sum_{k=1}^{\infty}\frac{\mathcal{H}^{2k}\|\varphi\|^{2k}_{{2k}}}{k! \alpha^{2k}}\\
&=&\int_{\R^N}\left(\exp\left(\frac{\mathcal{H}\varphi}{\alpha}\right)^2-1\right)dx.
\end{eqnarray*}
Therefore we obtain
\begin{eqnarray*}
  \|{\rm e}^{-t\Delta^2}\varphi\|_{\EP} &=& \inf\left\{\alpha>0,\,\,\,\int_{\R^N}\left(\exp
  \left(\frac{{\rm e}^{-t\Delta^2}\varphi}{\alpha}\right)^2-1\right)dx \leq1\right\} \\
  &\leqslant &\inf\left\{\alpha>0,\,\,\,\int_{\R^N}\left(\exp\left(\frac{\mathcal{H}\varphi}{\alpha}\right)^2-1\right)dx \leq1\right\}\\
  &{=}& {\mathcal{H}\|\varphi\|_{\EP}.}
\end{eqnarray*}
This proves (i).

We now turn to the proof of (ii). Using \eqref{x} with $p\leq 2,$ we have
 \begin{eqnarray*}
 \int_{\R^N}\left(\exp\left(\frac{{\rm e}^{-t\Delta^2}\varphi}{\alpha}\right)^2-1\right)dx
 &=&\sum_{k=1}^{\infty}\frac{\|{\rm e}^{-t\Delta^2}\varphi\|^{2k}_{{2k}}}{k!\alpha^{2k}}\nonumber\\
 &\leqslant & \sum_{k=1}^{\infty}\frac{\mathcal{H}^{2k}t^{-\frac{N}{4}(\frac{1}{p}-\frac{1}{2k})2k}\|\varphi\|^{2k}_{p}}{k!\alpha^{2k}}\nonumber\\
  &=&t^{\frac{N}{4}}\left(\exp\left(\frac{\mathcal{H}\, t^{-{\frac{N}{4p}}}\|\varphi\|_{p}}{\alpha}\right)^2-1\right).
\end{eqnarray*}
If we have
$$t^{\frac{N}{4}}\left(\exp\left(\frac{\mathcal{H}\, t^{-{\frac{N}{4p}}}\|\varphi\|_{p}}{\alpha}\right)^2-1\right)\leq 1,$$
then,
$$\mathcal{H}\,t^{-\frac{N}{4p}}\left(\log(t^{-\frac{N}{4}}+1)\right)^{-\frac{1}{2}} \|\varphi\|_{p}\leq \alpha.$$
It follows that
\begin{eqnarray*}
  \|{\rm e}^{-t\Delta^2}\varphi\|_{\EP} &\leqslant & \inf\left\{\alpha>0,\,\,\,\int_{\R^N}
  \left(\exp\left(\frac{{\rm e}^{-t\Delta^2}\varphi}{\alpha}\right)^2-1\right)dx \leq1\right\} \\
  &\leq&\mathcal{H}\,t^{-\frac{N}{4p}}\left(\log(t^{-\frac{N}{4}}+1)\right)^{-\frac{1}{2}} \|\varphi\|_{p}.
\end{eqnarray*}
This proves (ii).

We now prove (iii). By the embedding $L^2\cap L^{\infty}\hookrightarrow
\EP$, we have
\begin{eqnarray*}
   \|{\rm e}^{-t\Delta^2}\varphi\|_{\EP}&\leqslant & {1\over \sqrt{\log2}}\left[ \|{\rm e}^{-t\Delta^2}\varphi\|_{{\infty}}+
   \|{\rm e}^{-t\Delta^2}\varphi\|_{2}\right].
\end{eqnarray*}
Using the  $L^q-L^{\infty}$ estimate \eqref{x}, we get
\begin{eqnarray*}
  \|{\rm e}^{-t\Delta^2}\varphi\|_{\EP} &\leqslant & {\mathcal{H}\over \sqrt{\log2}}\left[t^{-\frac{N}{4q}} \|\varphi\|_{q}+\|\varphi\|_{2}\right].
\end{eqnarray*}
This proves (iii). The proof of the proposition is now complete.
\end{proof}
As a consequence we have the following.
\begin{corollary}
\label{lemme estimates}
Let $N\geq 9, \; q> {N\over 4}.$ Then, for every $g\in L^1\cap L^q,$ we have
 \begin{equation*}
    \|{\rm e}^{-t\Delta^2}g\|_{\EP}\leq \kappa(t)\,\|g\|_{L^1\cap L^q},\; \forall \; t>0,
\end{equation*}
where $\kappa\in L^1(0,\infty)$ is given by
$$\kappa(t)=\frac{ 2\mathcal{H}}{\sqrt{\log 2}}\min\biggr\{ t^{-\frac{N}{4q}}+1,\; t^{-\frac{N}{4}}\Big(\log(t^{-\frac{N}{4}}+1)\Big)^{-\frac{1}{2}}\biggl\}.$$
\end{corollary}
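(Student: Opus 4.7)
The plan is to combine parts (ii) and (iii) of Proposition \ref{pre} to produce the two competing bounds that make up the minimum in the definition of $\kappa(t)$, and then to verify the $L^1$ integrability of $\kappa$ by examining each bound in the regime where it is useful. The key point I want to exploit is that Proposition \ref{pre}(ii) with $p=1$ yields a bound of the form $t^{-N/4}\bigl(\log(t^{-N/4}+1)\bigr)^{-1/2}\|g\|_1$, which decays nicely as $t\to\infty$ but blows up too fast as $t\to 0^+$, while Proposition \ref{pre}(iii) gives a bound involving $t^{-N/(4q)}\|g\|_q+\|g\|_2$ which is moderate near $t=0$ (precisely when $q>N/4$) but only bounded (not decaying) at infinity. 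Taking the minimum patches together the two behaviors.

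For the first step, I would apply Proposition \ref{pre}(iii) to $\varphi=g$: since $N\geq 9$ and $q>N/4\geq 9/4>2$, the interpolation inequality $\|g\|_2\leq \|g\|_1^{1-\theta}\|g\|_q^{\theta}$ (with $\theta=q/(2(q-1))$) together with Young's inequality gives $\|g\|_2\leq \|g\|_1+\|g\|_q\leq\|g\|_{L^1\cap L^q}$; combining this with the estimate from (iii) produces
\begin{equation*}
\|{\rm e}^{-t\Delta^2}g\|_{\EP}\leq \tfrac{\mathcal{H}}{\sqrt{\log 2}}\bigl(t^{-N/(4q)}+1\bigr)\|g\|_{L^1\cap L^q}.
\end{equation*}
For the second step, I apply Proposition \ref{pre}(ii) with $p=1$ (noting $1\leq 2$) to obtain
\begin{equation*}
\|{\rm e}^{-t\Delta^2}g\|_{\EP}\leq \mathcal{H}\,t^{-N/4}\bigl(\log(t^{-N/4}+1)\bigr)^{-1/2}\|g\|_1.
\end{equation*}
Taking the minimum of these two bounds and absorbing the constants into the single factor $\tfrac{2\mathcal{H}}{\sqrt{\log 2}}$ (which dominates both $\tfrac{\mathcal{H}}{\sqrt{\log 2}}$ and $\mathcal{H}$) yields the claimed inequality $\|{\rm e}^{-t\Delta^2}g\|_{\EP}\leq \kappa(t)\,\|g\|_{L^1\cap L^q}$.

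The substantive part, and the main obstacle to overcome, is verifying that $\kappa\in L^1(0,\infty)$; here the two hypotheses $q>N/4$ and $N\geq 9$ each play a precise role. I split the integral at $t=1$. Near $t=0$ I use the first bound: $\int_0^1 (t^{-N/(4q)}+1)\,dt<\infty$ exactly when $N/(4q)<1$, i.e.\ when $q>N/4$. Near $t=\infty$ I use the second bound; substituting $s=t^{-N/4}$ reduces the tail integral to a constant multiple of $\int_0^1 s^{-4/N}\bigl(\log(1+s)\bigr)^{-1/2}\,ds$, whose integrand behaves like $s^{-4/N-1/2}$ near $s=0$. Integrability at $0$ then requires $4/N+1/2<1$, i.e.\ $N>8$, which is precisely the assumption $N\geq 9$. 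Thus the two sharp exponent constraints in the hypothesis are exactly what is needed to ensure each half of the min is integrable on its respective half-line, completing the proof.
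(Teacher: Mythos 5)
Your proposal is correct and follows essentially the same route as the paper: apply Proposition \ref{pre}(ii) with $p=1$ for the decaying bound, Proposition \ref{pre}(iii) plus interpolation ($\|g\|_2\leq\|g\|_1+\|g\|_q$) for the bound near $t=0$, take the minimum, and use $q>N/4$ and $N\geq 9$ for integrability of the two pieces. The only difference is that you spell out the $L^1(0,\infty)$ verification (the substitution $s=t^{-N/4}$ and the exponent count $4/N+1/2<1$), which the paper leaves implicit.
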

Here we use $\|g\|_{L^1\cap L^q}=\|g\|_{1}+\|g\|_{q}.$
\begin{proof}
We have, by Proposition \ref{pre} (ii),
\begin{equation}\label{ii9}
    \|{\rm e}^{-t\Delta^2}g\|_{\EP}\leq \mathcal{H}\, t^{-\frac{N}{4}}\Big(\log(t^{-\frac{N}{4}}+1)\Big)^{-\frac{1}{2}}\,\|g\|_{1}.
\end{equation}
Using Proposition \ref{pre} (iii) and interpolation inequality, we get
\begin{equation}\label{iii99}
    \|{\rm e}^{-t\Delta^2}g\|_{\EP}\leq \frac{2\mathcal{H}}{\sqrt{\log 2}}\,( t^{-\frac{N}{4q}}+1 )\Big[\|g\|_{q}+\|g\|_{1}\Big].
\end{equation}
Combining the inequalities \eqref{ii9} and \eqref{iii99}, we obtain
\begin{equation}\label{2.8}
    \|{\rm e}^{-t\Delta^2}g\|_{\EP}\leq \kappa(t)\;\left(\|g\|_{1}+\|g\|_{q}\right).
\end{equation}
By the assumption $N\geq9,\,\,q>\frac{N}{4}$, we can see that $\kappa \in L^{1}(0,\,\infty).$
\end{proof}

For $N=8$ we have a similar result. For this we need to introduce an appropriate Orlicz space. Let $\phi(u):={\rm e}^{u^2}-1-u^2$ and $L^{\phi}(\R^8)$ be the associated Orlicz space with the Luxembourg norm \eqref{Luxemb}. From the definition, we have
\begin{equation}\label{slim}
    C_1\|u\|_{\exp L^2(\R^8)}\leqslant\|u\|_{2}+\|u\|_{L^{\phi}(\R^8)}\leqslant C_2\|u\|_{\exp L^2(\R^8)},
\end{equation}
for some $C_1,\,C_2>0.$
\begin{corollary}
\label{forn=8}
For every $g\in L^1(\R^8)\cap L^4(\R^8),$ we have
 \begin{equation}\label{2.8b}
    \|{\rm e}^{-t\Delta^2}g\|_{L^\phi(\R^8)}\leq \zeta(t)\|g\|_{L^1\cap L^4(\R^8)},\; \forall \; t>0,
\end{equation}
where $\zeta\in L^1(0,\infty)$ is given by
$$
\zeta(t)=\frac{\mathcal{H}}{\sqrt{\log 2}}\,\min\biggr\{1+t^{-\frac{1}{2}},\; t^{-2}\Big(\log(t^{-2}+1)\Big)^{-\frac{1}{4}}\biggl\}.
$$
\end{corollary}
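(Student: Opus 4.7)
The plan is to mirror the proof of Corollary \ref{lemme estimates}, but with the target space $\EP$ replaced by $L^\phi(\R^8)$ and the auxiliary exponent $q>N/4$ replaced by $q=4$. The whole point of switching to $L^\phi$ at the critical dimension $N=8$ is that $\phi(Y)=\sum_{k\ge 2}Y^{2k}/k!$ starts at $k=2$, so every $L^p$--$L^{2k}$ bound we pick up carries an extra factor of $t^{N/4}=t^2$ compared with its $\EP$-counterpart. This is exactly what is needed to restore integrability of $\zeta$ at infinity, which would fail for $\EP$ in this dimension.

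First I would establish the $L^\phi$-analogue of \eqref{log2}, namely the embedding
$$
\|u\|_{L^\phi(\R^8)}\le \frac{1}{\sqrt{\log 2}}\bigl(\|u\|_4+\|u\|_\infty\bigr).
$$
The argument is the one used in Lemma \ref{Orl-Leb}(iv): H\"older interpolation $\|u\|_{2k}\le \|u\|_4^{2/k}\|u\|_\infty^{1-2/k}\le \|u\|_4+\|u\|_\infty$ (valid for $k\ge 2$) and the expansion of $\phi$ give $\int_{\R^8}\phi(u/\alpha)\,dx\le \phi((\|u\|_4+\|u\|_\infty)/\alpha)\le \exp((\|u\|_4+\|u\|_\infty)^2/\alpha^2)-1$, which is $\le 1$ as soon as $\alpha\ge (\|u\|_4+\|u\|_\infty)/\sqrt{\log 2}$. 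Applying this to $u={\rm e}^{-t\Delta^2}\varphi$ and using the $L^4$--$L^4$ and $L^4$--$L^\infty$ instances of Proposition \ref{LPLQQ} (the latter giving decay $t^{-N/16}=t^{-1/2}$) yields the first entry of the minimum, $\frac{\mathcal{H}}{\sqrt{\log 2}}(1+t^{-1/2})\|\varphi\|_4$.

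For the second entry I would repeat, word for word, the proof of Proposition \ref{pre}(ii) with $p=1$, but integrating $\phi$ in place of ${\rm e}^{\cdot^2}-1$. The $L^1$--$L^{2k}$ bounds from Proposition \ref{LPLQQ} collapse the series to
$$
\int_{\R^8}\phi\Bigl(\tfrac{{\rm e}^{-t\Delta^2}\varphi}{\alpha}\Bigr)\,dx\le t^2\,\phi(Y),\qquad Y:=\frac{\mathcal{H}\,t^{-2}\|\varphi\|_1}{\alpha},
$$
so the constraint on $\alpha$ becomes $\phi(Y)\le t^{-2}$, i.e. $Y\le \phi^{-1}(t^{-2})$. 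The crucial elementary input is the pointwise comparison
$$
\phi^{-1}(z)\ge \sqrt{\log 2}\,\bigl(\log(1+z)\bigr)^{1/4},\qquad z\ge 0,
$$
which rewrites as $\exp(Y^4/\log 2)+Y^2\ge {\rm e}^{Y^2}$ for every $Y\ge 0$. Asymptotically this is clear: Taylor expansion around $0$ gives the leading coefficient $Y^4(1/\log 2-1/2)>0$, and for $Y\ge \sqrt{\log 2}$ one has $Y^4/\log 2\ge Y^2$, so $\exp(Y^4/\log 2)\ge {\rm e}^{Y^2}$ already. The intermediate range reduces to a one-variable calculus check on the function $g(Y):=\exp(Y^4/\log 2)+Y^2-{\rm e}^{Y^2}$. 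Setting $z=t^{-2}$ then gives the second entry of the minimum, $\frac{\mathcal{H}}{\sqrt{\log 2}}\,t^{-2}\bigl(\log(1+t^{-2})\bigr)^{-1/4}\|\varphi\|_1$.

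Taking the minimum of these two bounds and absorbing $\|\varphi\|_4,\,\|\varphi\|_1\le \|\varphi\|_{L^1\cap L^4(\R^8)}$ produces \eqref{2.8b}. The integrability $\zeta\in L^1(0,\infty)$ is then immediate: as $t\to 0^+$ the first entry dominates and $\zeta(t)\sim t^{-1/2}$, while as $t\to\infty$ the second dominates and, since $\log(1+t^{-2})\sim t^{-2}$, $\zeta(t)\sim t^{-3/2}$; both tails are integrable. The delicate step I expect is precisely the inverse-function estimate for $\phi^{-1}$ with exponent $1/4$: the cruder bound $\phi(Y)\le {\rm e}^{Y^2}-1$ would give only the exponent $1/2$, producing a non-integrable $t^{-1}$ at infinity, so one must genuinely retain the subtracted $Y^2$ in $\phi$ throughout the comparison.
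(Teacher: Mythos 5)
Your proposal is correct and follows essentially the same route as the paper: the same splitting into two bounds (the embedding $L^4(\R^8)\cap L^\infty(\R^8)\hookrightarrow L^\phi(\R^8)$ combined with the $L^4$--$L^4$ and $L^4$--$L^\infty$ smoothing estimates for the first entry, and the series computation collapsing to $t^2\phi\bigl(\mathcal{H}t^{-2}\|g\|_1/\alpha\bigr)$ for the second), the only difference being that the paper converts the constraint via the elementary bound $\phi(Y)\le {\rm e}^{Y^4}-1$ while you phrase the same fact as a lower bound on $\phi^{-1}$, losing only a factor $\sqrt{\log 2}$ that is already present in $\zeta$. One small slip: the exact rewriting of $\phi^{-1}(z)\ge\sqrt{\log 2}\,(\log(1+z))^{1/4}$ is $\exp\bigl(Y^4/(\log 2)^2\bigr)+Y^2\ge {\rm e}^{Y^2}$, not $\exp\bigl(Y^4/\log 2\bigr)+Y^2\ge {\rm e}^{Y^2}$; since $(\log 2)^2<\log 2$ your stated inequality is the stronger one and it does hold (for $Y^2\le\log 2$ one has $\phi(Y)\le Y^4(1-\log 2)/(\log 2)^2\le Y^4/\log 2$, and for $Y^2\ge\log 2$ one has $Y^4/\log 2\ge Y^2$), so the argument is unaffected.
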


\begin{proof} We have, using Proposition \ref{LPLQQ},
\begin{eqnarray*}
  \int_{\R^8}\phi\left(\frac{\left|{\rm e}^{-t\Delta^2}g\right|}{\alpha}\right)\,dx &=&
\sum_{k\geq2}\frac{\|{\rm e}^{-t\Delta^2}g\|^{2k}_{{2k}}}{\alpha^{2k}k!} \\
   &\leq& \sum_{k\geq2}\frac{\mathcal{H}^{2k}t^{-2(1-\frac{1}{2k}){2k}}\|g\|_{1}^{2k}}{\alpha^{2k}k!}\\
   &=& t^{2}\phi\left(\frac{\mathcal{H}t^{-2}\|g\|_{1}}{\alpha}\right) \\
   &\leq& t^{2}\left(\exp\Big\{\left(\frac{\mathcal{H}t^{-2}\|g\|_{1}}{\alpha}\right)^4\Big\}-1\right).
\end{eqnarray*}
Therefore we obtain that
\begin{eqnarray}\label{2.13}
  \|{\rm e}^{-t\Delta^2}g\|_{L^{\phi}(\R^8)} &\leq& \inf\left\{\alpha>0,
  t^{2}\left(\exp\Big\{\left(\frac{\mathcal{H}t^{-2}\|g\|_{1}}{\alpha}\right)^4\Big\}-1\right)\leq1 \right\} \nonumber\\
   &=& \mathcal{H}\,t^{-2}\Big(\log\left(t^{-2}+1\right)\Big)^{-\frac{1}{4}} \|g\|_{1}.
\end{eqnarray}
On the other hand, from the embedding $L^4(\R^8)\cap L^{\infty}(\R^8)\hookrightarrow L^{\phi}(\R^8)$,  we see that
\begin{equation*}
\begin{split}
\|{\rm e}^{-t\Delta^2}g\|_{L^{\phi}(\R^8)}&\leq\,\frac{1}{\sqrt{\log 2}}\Big[\|{\rm e}^{-t\Delta^2}g\|_{{\infty}}+\|{\rm e}^{-t\Delta^2}g\|_{{4}}\Big].
\end{split}
\end{equation*}
Using Proposition \ref{LPLQQ}, we obtain that
\begin{equation}\label{2.14}
\begin{split}
\|{\rm e}^{-t\Delta^2}g\|_{L^{\phi}(\R^8)}&\leq\, \frac{\mathcal{H}}{\sqrt{\log 2}}\, \Big[ t^{-\frac{1}{2}}\|g\|_{{4}}+ \|g\|_{{4}}\Big].
\end{split}
\end{equation}
Combining the inequalities \eqref{2.13} and \eqref{2.14}, we have
\begin{equation*}
   \|{\rm e}^{-t\Delta^2}g\|_{L^{\phi}(\R^8)}\leq \zeta(t)\|g\|_{L^1\cap L^4(\R^8)}.
\end{equation*}
We remark that $\zeta \in L^{1}(0,\infty)$.
\end{proof}
We will also need the following result for the proofs.
\begin{proposition}\label{continu0}
 If $u_0\in \EPO$ then  ${\rm e}^{-t\Delta^2}u_0 \in C([0,\infty); \EPO).$
\end{proposition}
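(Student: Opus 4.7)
The plan is to establish the three ingredients in sequence: (a) strong continuity at $t=0$ from the right on $\EPO$, (b) invariance of $\EPO$ under the semigroup for each $t\geq 0$, and (c) continuity at every $t_0>0$ by reducing it to (a) via the semigroup property.

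For (a), I would exploit density. Given $u_0\in\EPO$ and $\varepsilon>0$, pick $\varphi\in C_0^\infty(\R^N)$ with $\|u_0-\varphi\|_{\EP}<\varepsilon$ and decompose
\[
{\rm e}^{-t\Delta^2}u_0-u_0 = {\rm e}^{-t\Delta^2}(u_0-\varphi) + \bigl({\rm e}^{-t\Delta^2}\varphi-\varphi\bigr) + (\varphi-u_0).
\]
Proposition \ref{pre}(i) controls the first and third terms by $(\mathcal{H}+1)\varepsilon$. For the middle term, Lemma \ref{Orl-Leb}(iv) gives
\[
\|{\rm e}^{-t\Delta^2}\varphi-\varphi\|_{\EP} \leq \frac{1}{\sqrt{\log 2}}\bigl(\|{\rm e}^{-t\Delta^2}\varphi-\varphi\|_2 + \|{\rm e}^{-t\Delta^2}\varphi-\varphi\|_\infty\bigr),
\]
and since $\varphi\in C_0^\infty$, the identity ${\rm e}^{-t\Delta^2}\varphi-\varphi = -\int_0^t{\rm e}^{-s\Delta^2}\Delta^2\varphi\,ds$ combined with Proposition \ref{LPLQQ} (at $p=q=2$ and $p=q=\infty$) yields
\[
\|{\rm e}^{-t\Delta^2}\varphi-\varphi\|_r \leq \mathcal{H}\,t\,\|\Delta^2\varphi\|_r, \qquad r\in\{2,\infty\},
\]
both of which vanish as $t\to 0^+$. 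Letting $t\to 0^+$ and then $\varepsilon\to 0$ finishes (a).

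For (b), fix $t>0$ and approximate $u_0$ by $\varphi_n\in C_0^\infty$ in the $\EP$-norm. Then ${\rm e}^{-t\Delta^2}\varphi_n\in L^2\cap L^\infty$ by Proposition \ref{LPLQQ}, hence lies in $\EPO$ by Lemma \ref{Orl-Leb}(iv). Proposition \ref{pre}(i) gives ${\rm e}^{-t\Delta^2}\varphi_n\to{\rm e}^{-t\Delta^2}u_0$ in $\EP$, and since $\EPO$ is closed in $\EP$ (it is the closure of $C_0^\infty$ for the Luxemburg norm), we conclude ${\rm e}^{-t\Delta^2}u_0\in\EPO$. For (c), fix $t_0>0$. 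If $t>t_0$, apply (a) to the function ${\rm e}^{-t_0\Delta^2}u_0\in\EPO$ (by (b)) with time $t-t_0\to 0^+$:
\[
\|{\rm e}^{-t\Delta^2}u_0 - {\rm e}^{-t_0\Delta^2}u_0\|_{\EP} = \bigl\|\bigl({\rm e}^{-(t-t_0)\Delta^2}-I\bigr)\,{\rm e}^{-t_0\Delta^2}u_0\bigr\|_{\EP} \longrightarrow 0.
\]
If $t<t_0$, use the semigroup identity ${\rm e}^{-t_0\Delta^2}={\rm e}^{-t\Delta^2}{\rm e}^{-(t_0-t)\Delta^2}$ together with Proposition \ref{pre}(i):
\[
\|{\rm e}^{-t\Delta^2}u_0 - {\rm e}^{-t_0\Delta^2}u_0\|_{\EP} \leq \mathcal{H}\,\|u_0 - {\rm e}^{-(t_0-t)\Delta^2}u_0\|_{\EP} \longrightarrow 0
\]
by (a), giving continuity from the left at $t_0$.

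I expect the main obstacle to be establishing strong continuity at $t=0$ on smooth compactly supported data, i.e.\ the bound on $\|{\rm e}^{-t\Delta^2}\varphi-\varphi\|_{\EP}$. The standard heat-equation argument through positivity of the kernel is unavailable here since $E_1$ changes sign; instead one must rely on the fundamental theorem of calculus in time together with the uniform $L^p$–$L^q$ estimate of Proposition \ref{LPLQQ} and the Orlicz-into-$L^2\cap L^\infty$ embedding of Lemma \ref{Orl-Leb}(iv). Once this is in place, (b) and (c) are routine density/semigroup bookkeeping.
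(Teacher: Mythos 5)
Your proof is correct and its core---approximating $u_0$ by elements of $C_0^\infty(\R^N)$, splitting ${\rm e}^{-t\Delta^2}u_0-u_0$ by the triangle inequality, controlling the two error terms through Proposition \ref{pre}(i), and handling the smooth middle term via the embedding $L^2\cap L^\infty\hookrightarrow\EP$---is exactly the paper's argument for continuity at $t=0$. The remaining differences are added detail rather than a different route: you justify $\|{\rm e}^{-t\Delta^2}\varphi-\varphi\|_{2}+\|{\rm e}^{-t\Delta^2}\varphi-\varphi\|_{\infty}\to 0$ by the fundamental-theorem-of-calculus identity and Proposition \ref{LPLQQ} (the paper simply asserts this for $C_0^\infty$ data), you obtain ${\rm e}^{-t\Delta^2}u_0\in\EPO$ by approximation and closedness of $\EPO$ where the paper invokes the series expansion of Proposition \ref{pre}(i) together with the characterization \eqref{exp L0}, and you write out continuity at $t_0>0$ via the semigroup property, a step the paper leaves implicit.
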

\begin{proof} Let $u_0\in \EPO.$ Similarly as in the proof of Part (i) of Proposition \ref{pre} and by the definition of $\EPO$ in \eqref{exp L0}, we have that ${\rm e}^{-t\Delta^2}u_0\in \EPO$ for every $t>0.$ Thus, it remains only to prove the continuity at $t=0.$ That is
$$\displaystyle\lim_{t\rightarrow0}\|{\rm e}^{-t\Delta^2}u_0-u_0\|_{\EP}=0.$$

Since $u_0\in \EPO,$ there exist $\{u_n\}_{n=1}^{\infty}\subset C_0^{\infty}(\R^N)$  such that $\displaystyle\lim_{n\rightarrow \infty}\|u_n-u_0\|_{\EP}=0.$ Thus, we have
\begin{eqnarray*}
  \|{\rm e}^{-t\Delta^2}u_0-u_0\|_{\EP}&\leq& \|{\rm e}^{-t\Delta^2}(u_0-u_n)\|_{\EP}+\|{\rm e}^{-t\Delta^2}u_n-u_n\|_{\EP} \\
  &&\qquad+  \|u_n-u_0\|_{\EP}.
\end{eqnarray*}
We use the fact that $L^2\cap L^{\infty}\subset\EP$ and Part (i) of Proposition \ref{pre}, we obtain
\begin{eqnarray*}
  \|{\rm e}^{-t\Delta^2}u_0-u_0\|_{\EP}
 &\leq& \frac{1}{\sqrt{\log 2}}\left(\|{\rm e}^{-t\Delta^2}u_n-u_n\|_{2}+\|{\rm e}^{-t\Delta^2}u_n-u_n\|_{{\infty}}\right)\\
&&\qquad+(\mathcal{H}+1)\|u_n-u_0\|_{\EP}.
\end{eqnarray*}
Since $u_n\in C_0^{\infty}(\R^N),$ we have  $\displaystyle\lim_{t\rightarrow0}\left(\|{\rm e}^{-t\Delta^2}u_n-u_n\|_{2}+\|{\rm e}^{-t\Delta^2}u_n-u_n\|_{{\infty}}\right)=0.$
Hence $$\displaystyle\limsup_{t\rightarrow0} \|{\rm e}^{-t\Delta^2}u_0-u_0\|_{\EP}\leq { (\mathcal{H}+1)}\|u_n-u_0\|_{\EP},$$
for every $n\in \mathbb{N}.$ This finishes the proof of the proposition.
\end{proof}

It is known that ${\rm e}^{-t\Delta^2}$ is a $C^0-$semigroup on $L^p.$ By Proposition \ref{continu0}, it is also a $C^0-$semigroup on $ \EPO.$ We will show that is not the case on $\EP.$ That is, we will prove that ${\rm e}^{-t\Delta^2}$ is not a $C^0-$ semigroup on $\EP.$ In fact, we show that   ${\rm e}^{-t\Delta^2}$ is not continuous at $t=0$ in $\EP$. We have the following result.

\begin{proposition}
\label{pdiscontinuite}
There exist $u_0\in \EP$ and a constant $C>0$ such that
\begin{equation}
\label{ineq1}
 \|{\rm e}^{-t\Delta^2} u_0-u_0\|_{\EP} \geqslant C,\; \forall\; t>0.
\end{equation}
\end{proposition}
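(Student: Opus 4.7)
The plan is to exhibit a concrete counterexample and leverage the closedness of $\EPO$ as a subspace of $\EP$. I would take the same function as in the proof of Lemma \ref{Orl-Leb}(i), namely
\[
u_0(x) \;=\; \bigl(-\log|x|\bigr)^{1/2}\,\chi_{\{|x|\leq 1\}}(x),
\]
which that proof already shows lies in $\EP\setminus\EPO$. The strategy is then to verify that ${\rm e}^{-t\Delta^2}u_0\in \EPO$ for every $t>0$, and to deduce \eqref{ineq1} by noting that the distance from $u_0$ to the closed subspace $\EPO$ is a positive constant independent of $t$.

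The key step is the inclusion ${\rm e}^{-t\Delta^2}u_0\in\EPO$ for $t>0$. Note that $u_0\in L^2(\R^N)$, either by Lemma \ref{sarah55} (since $u_0\in\EP$) or by the direct radial computation
\[
\int_{\R^N} u_0(x)^2\,dx \;=\; |\mathcal{S}^{N-1}|\int_0^1 r^{N-1}(-\log r)\,dr \;<\;\infty.
\]
Applying Proposition \ref{LPLQQ} with $(p,q)=(2,2)$ and $(p,q)=(2,\infty)$, I obtain
\[
\|{\rm e}^{-t\Delta^2}u_0\|_2 \;+\; \|{\rm e}^{-t\Delta^2}u_0\|_\infty \;\leq\; \mathcal{H}\bigl(1+t^{-N/8}\bigr)\|u_0\|_2 \;<\;\infty,\quad \forall\; t>0.
\]
Hence ${\rm e}^{-t\Delta^2}u_0\in L^2\cap L^\infty$, and Lemma \ref{Orl-Leb}(iv) yields ${\rm e}^{-t\Delta^2}u_0\in\EPO$.

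To finish, I would use that $\EPO$ is, by definition, the closure of $C_0^\infty(\R^N)$ in the Luxemburg norm, hence a closed subspace of $\EP$. Since $u_0\notin\EPO$, the quantity
\[
C \;:=\; {\rm dist}_{\EP}\!\bigl(u_0,\,\EPO\bigr) \;=\; \inf_{v\in\EPO}\|u_0-v\|_{\EP}
\]
is strictly positive. Because ${\rm e}^{-t\Delta^2}u_0\in\EPO$ for each $t>0$ by the previous step, this immediately gives
\[
\|{\rm e}^{-t\Delta^2}u_0-u_0\|_{\EP} \;\geq\; {\rm dist}_{\EP}\!\bigl(u_0,\,\EPO\bigr) \;=\; C,\quad \forall\; t>0,
\]
which is precisely \eqref{ineq1}.

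The only nontrivial point is really the first step: one must know that the biharmonic semigroup sends $\EP$ (and in particular this specific singular $u_0$) into $\EPO$ for positive times. This would fail if the semigroup were only bounded from $\EP$ into itself; it works here because the smoothing estimate of Proposition \ref{LPLQQ}, combined with the embedding $\EP\hookrightarrow L^2$ from Lemma \ref{sarah55}, places the image inside the much smaller space $L^2\cap L^\infty$, which embeds into $\EPO$ by Lemma \ref{Orl-Leb}(iv). Everything after that is a soft functional-analytic consequence of $\EPO$ being a proper closed subspace of $\EP$.
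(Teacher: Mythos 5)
Your proof is correct, and it takes a genuinely different route from the paper. The paper argues quantitatively through rearrangements: it invokes the Ioku-type lower bound of Lemma \ref{(5.2)} involving the average function $u^{\sharp\sharp}$, shows via Proposition \ref{plinfty} that $({\rm e}^{-t\Delta^2}u_0)^{\sharp\sharp}$ is bounded so its contribution vanishes as $r\to 0$, and then picks a specific $u_0$ whose average rearrangement equals $\left(\log\frac{e}{r}\right)^{1/2}$ near the origin, so that the limit in \eqref{ineqgeneral} is bounded below. You instead use a soft functional-analytic argument: the smoothing estimate of Proposition \ref{LPLQQ} together with $\EP\hookrightarrow L^2$ (Lemma \ref{sarah55}) places ${\rm e}^{-t\Delta^2}u_0$ in $L^2\cap L^\infty$, which embeds into $\EPO$ by Lemma \ref{Orl-Leb}(iv); since $\EPO$ is a closed proper subspace and your $u_0$ (the function from Lemma \ref{Orl-Leb}(i)) lies in $\EP\setminus\EPO$, the orbit stays at distance at least ${\rm dist}_{\EP}(u_0,\EPO)>0$ from $u_0$. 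Your argument is in fact more general, since it shows that \emph{every} $u_0\in\EP\setminus\EPO$ witnesses the discontinuity, whereas the paper's argument is tied to a particular profile; the trade-off is that your constant $C$ is a non-explicit distance, while the paper's rearrangement computation is more quantitative and does not rely on the characterization \eqref{exp L0} of $\EPO$ or on Lemma \ref{Orl-Leb}(iv). Both proofs share the same essential mechanism, namely the instantaneous $L^\infty$-smoothing of the biharmonic semigroup.
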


To prove the previous proposition, we recall the notion of rearrangement of functions. Let $u$ be measurable function defined on $\R^N$ which is finite almost everywhere. The distribution function $\mu_u$ of $u$ is given by
$$\mu_u(\tau)=\Big|\left\{x\in \R^N;\, |u(x)|>\tau\right\}\Big|.$$
The decreasing rearrangement of u is the function $u^{\sharp}$ defined on $[0,\infty)$ by
\begin{equation*}
    u^{\sharp}(r):=\inf\biggr\{\tau>0;\,\mu_u(\tau)\leq r\biggl\}.
\end{equation*}
 Let $u^{\sharp \sharp}$ be the average function of $u^{\sharp},$ namely
\begin{equation}\label{mean}
   u^{\sharp \sharp}(r):=\frac{1}{r}\int_0^ru^{\sharp}(\eta)\,d\eta.
\end{equation}

We recall the following lemma.
\begin{lemma}\label{(5.2)}(\cite{Ioku}) Let $u\in\EP$. Then the
following inequality holds
\begin{equation*}\label{83....}
    \sup_{0<r<1}\frac{u^{\sharp\sharp}(r)}{(\log(\frac{e}{r}))^{\frac{1}{2}}}+\|u\|_{2}\leq C\|u\|_{\EP}.
\end{equation*}
\end{lemma}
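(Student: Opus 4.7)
The statement splits into two independent bounds. The $L^2$ estimate is immediate from Lemma \ref{sarah55} applied with $r=2$: since $\Gamma(2)=1$, we get $\|u\|_2\le\|u\|_{\EP}$. It therefore suffices to prove $u^{\sharp\sharp}(r)\le C(\log(e/r))^{1/2}\|u\|_{\EP}$ uniformly for $r\in(0,1)$, and by homogeneity of the Luxemburg norm I may normalize $\|u\|_{\EP}=1$.

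The central input is the equimeasurability of $u$ and its rearrangement $u^\sharp$: for every Borel function $F:[0,\infty)\to[0,\infty)$,
\begin{equation*}
\int_{\R^N}F(|u(x)|)\,dx=\int_0^\infty F(u^\sharp(r))\,dr.
\end{equation*}
Taking $F(t)={\rm e}^{t^2}-1$ and combining with the modular inequality \eqref{med1} at $\alpha=\|u\|_{\EP}=1$ gives $\int_0^\infty({\rm e}^{u^\sharp(r)^2}-1)\,dr\le 1$. Since $u^\sharp$ is nonincreasing, for every $r>0$,
\begin{equation*}
r\bigl({\rm e}^{u^\sharp(r)^2}-1\bigr)\le\int_0^r\bigl({\rm e}^{u^\sharp(s)^2}-1\bigr)ds\le 1,
\end{equation*}
which rearranges to the pointwise bound $u^\sharp(r)\le\bigl(\log(1+1/r)\bigr)^{1/2}$.

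To pass from $u^\sharp$ to its average $u^{\sharp\sharp}$, I would substitute $s=r\tau$ in \eqref{mean} to obtain
\begin{equation*}
u^{\sharp\sharp}(r)=\int_0^1 u^\sharp(r\tau)\,d\tau\le\int_0^1\bigl(\log(1+1/(r\tau))\bigr)^{1/2}d\tau.
\end{equation*}
For $r\in(0,1)$ and $\tau\in(0,1)$ one has $r\tau\in(0,1)$, hence $\log(1+1/(r\tau))\le\log 2+\log(1/r)+\log(1/\tau)$. Using subadditivity of $\sqrt{\cdot\,}$ and the fact that $\int_0^1(\log(1/\tau))^{1/2}d\tau=\Gamma(3/2)$ is finite, this yields
\begin{equation*}
u^{\sharp\sharp}(r)\le(\log(1/r))^{1/2}+C\le C'\bigl(\log(e/r)\bigr)^{1/2},
\end{equation*}
where the last inequality uses that $\log(e/r)\ge 1$ for $r\in(0,1)$. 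Undoing the normalization by multiplying through by $\|u\|_{\EP}$ completes the proof.

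The only nonroutine ingredient is the equimeasurability identity for the exponential modular, which is a standard layer-cake computation; the remaining work is bookkeeping with logarithms. I do not expect a serious obstacle, but the comparisons $\log(1+1/r)\lesssim\log(e/r)$ and $\log(e/r)\ge 1$ on $(0,1)$ are essential to collapse the two-variable estimate into the one-variable weight $(\log(e/r))^{1/2}$, so the restriction $r<1$ cannot be dropped without changing the statement.
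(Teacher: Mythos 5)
Your proof is correct. Note that the paper does not prove this lemma at all: it is quoted from \cite{Ioku} without argument, so there is no internal proof to compare against; your write-up supplies exactly the missing details. The route you take is the standard one and works as stated: the $L^2$ bound is indeed immediate from Lemma \ref{sarah55} with $r=2$ (since $\Gamma(2)=1$), the modular inequality \eqref{med1} together with equimeasurability of $|u|$ and $u^{\sharp}$ gives $\int_0^\infty\bigl({\rm e}^{u^{\sharp}(s)^2}-1\bigr)ds\leq 1$ after normalizing $\|u\|_{\EP}=1$, monotonicity of $u^{\sharp}$ then yields the pointwise bound $u^{\sharp}(r)\leq\bigl(\log(1+1/r)\bigr)^{1/2}$, and the averaging step with $\int_0^1(\log(1/\tau))^{1/2}d\tau=\Gamma(3/2)<\infty$ converts this into the claimed bound on $u^{\sharp\sharp}$ for $0<r<1$. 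Two small precisions: the layer-cake identity you invoke requires $F(0)=0$ (otherwise the set where $|u|=0$, of infinite measure, causes trouble), which is satisfied by $F(t)={\rm e}^{t^2}-1$; and one should dispose of the trivial case $\|u\|_{\EP}=0$ before normalizing, since \eqref{med1} is stated for $\|u\|_{L^\phi}>0$. Neither affects the validity of the argument.
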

We also give the following result for $u_0\in \EP.$
\begin{proposition}
\label{plinfty}
For any $t>0$ and any $u_0\in \EP,$ we have
$$ ({\rm e}^{-t\Delta^2}u_0)^{\sharp\sharp}\in L^\infty(0,\infty).$$
\end{proposition}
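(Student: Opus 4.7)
The plan is to reduce the claim to the smoothing effect of the biharmonic heat semigroup, namely that ${\rm e}^{-t\Delta^2}u_0\in L^\infty(\R^N)$ for every $t>0$. Once this is established, the conclusion follows from the elementary monotonicity property of the rearrangement.

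More precisely, first I would note that the Orlicz--Lebesgue embedding of Lemma \ref{sarah55} (taking $r=2$) gives
\[
\|u_0\|_{2}\leq \Gamma(2)^{1/2}\,\|u_0\|_{\EP}=\|u_0\|_{\EP},
\]
so that $u_0\in L^2(\R^N)$ whenever $u_0\in \EP$. Then the $L^2$--$L^\infty$ smoothing estimate obtained in Proposition \ref{LPLQQ} with $p=2$, $q=\infty$ yields
\[
\|{\rm e}^{-t\Delta^2}u_0\|_\infty\leq \mathcal{H}\,t^{-N/8}\,\|u_0\|_{2}\leq \mathcal{H}\,t^{-N/8}\,\|u_0\|_{\EP}<\infty,\quad\forall\;t>0.
\]
Hence, for every fixed $t>0$, the function $v_t:={\rm e}^{-t\Delta^2}u_0$ lies in $L^\infty(\R^N)$.

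To conclude, I use the standard fact that the decreasing rearrangement satisfies $v_t^{\sharp}(\eta)\leq \|v_t\|_\infty$ for all $\eta>0$ (by definition of $v_t^\sharp$ as the infimum of $\tau$ with $\mu_{v_t}(\tau)\leq \eta$, taking $\tau=\|v_t\|_\infty$). Plugging this into the definition \eqref{mean} of the average function gives
\[
v_t^{\sharp\sharp}(r)=\frac{1}{r}\int_0^r v_t^{\sharp}(\eta)\,d\eta\leq \|v_t\|_\infty\leq \mathcal{H}\,t^{-N/8}\,\|u_0\|_{\EP},\quad\forall\;r>0,
\]
which proves $v_t^{\sharp\sharp}\in L^\infty(0,\infty)$. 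There is no genuine obstacle here; the only point to emphasize is that for a general $u\in\EP$ the quantity $u^{\sharp\sharp}$ may blow up logarithmically at the origin (as quantified by Lemma \ref{(5.2)}), so the statement is really a manifestation of the instantaneous $L^\infty$-regularization provided by ${\rm e}^{-t\Delta^2}$.
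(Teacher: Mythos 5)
Your argument is correct and is essentially the paper's own proof: both deduce ${\rm e}^{-t\Delta^2}u_0\in L^\infty$ from the $L^2$--$L^\infty$ smoothing estimate of Proposition \ref{LPLQQ} combined with the embedding $\EP\hookrightarrow L^2$ of Lemma \ref{sarah55}, and then pass to the rearranged function. The only cosmetic difference is that the paper invokes the identity $\|{\rm e}^{-t\Delta^2}u_0\|_{\infty}=\|({\rm e}^{-t\Delta^2}u_0)^{\sharp\sharp}\|_{L^{\infty}(0,\infty)}$, whereas you verify the (one-sided, and sufficient) bound $v_t^{\sharp\sharp}(r)\leq\|v_t\|_\infty$ directly from the definitions.
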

\begin{proof} Using Proposition \ref{LPLQQ} and Lemma \ref{sarah55}, we have
$$
\|{\rm e}^{-t\Delta^2}u_0\|_{\infty}\leq \mathcal{H}  t^{-\frac{N}{8}}\|u_0\|_{2}\leq \mathcal{H} t^{-\frac{N}{8}}\|u_0\|_{\EP}<\infty.
$$
Hence ${\rm e}^{-t\Delta^2}u_0\in L^\infty.$ By using the rearrangement property, we have
$$\|{\rm e}^{-t\Delta^2}u_0\|_{{\infty}}=\|({\rm e}^{-t\Delta^2}u_0)^{\sharp\sharp}\|_{L^{\infty}(0,\infty)},$$ and then  $({\rm e}^{-t\Delta^2}u_0)^{\sharp\sharp}\in L^\infty(0,\infty).$
\end{proof}

We now turn to the proof of  inequality \eqref{ineq1}.

\begin{proof}[Proof of  Proposition \ref{pdiscontinuite}] Using Lemma \ref{(5.2)}, we have
\begin{eqnarray*}
% \nonumber to remove numbering (before each equation)
  \|{\rm e}^{-t\Delta^2}u_0-u_0\|_{\EP} &\geq& C\sup_{0<r<1}\frac{\left({\rm e}^{-t\Delta^2}u_0-u_0\right)^{\sharp\sharp}(r)}{\left(\log\frac{e}{r}\right)^{\frac{1}{2}}} \\
   &\geq& C \lim_{r\rightarrow 0} \frac{\left({\rm e}^{-t\Delta^2}u_0-u_0\right)^{\sharp\sharp}(r)}{\left(\log\frac{e}{r}\right)^{\frac{1}{2}}}.
\end{eqnarray*}
Thanks to the triangle inequality of the average function: $(u+v)^{\sharp\sharp}(r)\leq u^{\sharp\sharp}(r)+v^{\sharp\sharp}(r),$
 and the mean value \eqref{mean}, we obtain
\begin{equation*}
    \|{\rm e}^{-t\Delta^2}u_0-u_0\|_{\EP}\geq C \lim_{r\rightarrow 0} \frac{u_0^{\sharp\sharp}(r)- ({\rm e}^{-t\Delta^2}u_0)^{\sharp\sharp}(r)}{\left(\log\frac{e}{r}\right)^{\frac{1}{2}}}.
\end{equation*}
By Proposition \ref{plinfty}, we see that
$$\lim_{r\rightarrow 0}\frac{({\rm e}^{-t\Delta^2}u_0)^{\sharp\sharp}(r)}{\left(\log\frac{e}{r}\right)^{\frac{1}{2}}}=0.$$
Thus, we get
\begin{equation}
\label{ineqgeneral}
    \|{\rm e}^{-t\Delta^2}u_0-u_0\|_{\EP}\geq C \lim_{r\rightarrow 0} \frac{u_0^{\sharp\sharp}(r)}{\left(\log\frac{e}{r}\right)^{\frac{1}{2}}}.
\end{equation}
To conclude we make the choice of $u_0$ as follows. Let $\omega_N$ be measure of the unit ball in $\R^N$ and
\begin{equation*}
    u_0(x):=\left\{
              \begin{array}{ll}
              \displaystyle \frac{1-2\log\left(\omega_N|x|^N\right)}{2\sqrt{1-\log(\omega_N|x|^N)}}, & \hbox{if} \,\,0<|x|<1, \\
                0, &  \hbox{if}\; |x|\geq 1.
              \end{array}
            \right.
\end{equation*}
Then we have $$u_0^{\sharp\sharp}(r)=\left(\log\frac{e}{r}\right)^{\frac{1}{2}},\; \mbox{ for }\; 0<r<\omega_N.$$ Therefore, using \eqref{ineqgeneral}, we obtain
\begin{equation*}
    \|{\rm e}^{-t\Delta^2}u_0-u_0\|_{\EP}\geq C .
\end{equation*}
This finishes the proof of Proposition \ref{pdiscontinuite}.
\end{proof}

%%%%%%%%%%%%%%%%%%%%%%%%%%%%%%%%%%%%%%%%%%%%%%%%%%%%%%%%%%%%%%%%%%%%%%%%%%%%%%%%%%%%%%%%%%%%%%%ù%%%%%%%%%%%%%%%%%%%%%%%%%%%%%%%%%%
\section{Local well-posedness in $\EPO$}

%%%%%%%%%%%%%%%%%%%%%%%%%%%%%%%%%%%%%%%%%%%%%%%%%%%%%%%%%%%%%%%%%%%%%%%%%%%%%%%%%%%%%%%%%%%%%%%ù%%%%%%%%%%%%%%%%%%%%%%%%%%%%%%%%%%
In this section we prove the existence and the uniqueness of solution to \eqref{1.1} in $C([0,T]; \EPO)$ for some $T>0$, namely Theorem \ref{local}. Throughout this section we assume that the nonlinearity $f :\R\to\R$ satisfies $f(0)=0$ and
\begin{equation}\label{Nonlin}
|f(u)-f(v)|\leq C|u-v|\left({\rm e}^{\lambda \,u^2}+{\rm e}^{\lambda \,v^2}\right),\quad\forall\;u,\; v\in\R
\end{equation}
for some constants $C>0,\;\lambda>0$. We emphasize that, thanks to Proposition \ref{step1}, the Cauchy problem \eqref{1.1} admits the equivalent integral formulation \eqref{integral}.
\begin{proposition}\label{4.1} Let $T > 0$ and $u_0$ be in $\EPO$. If $u$ belongs to $C([0,T]; \EPO)$, then $u$ is a weak solution of \eqref{1.1} if and only if $u(t)$ satisfies the integral equation \eqref{integral} for any  $t\in(0, T )$.
\end{proposition}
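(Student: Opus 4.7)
My plan is to prove the claimed iff by using Duhamel's formula to bridge the distributional and mild formulations, with $C_0^\infty$ test functions in one direction and adjoint--semigroup test functions in the other. The preparatory facts I will use are Proposition \ref{step1}, giving $f(u)\in C([0,T];L^p)$ for every $2\le p<\infty$, Proposition \ref{continu0}, giving strong continuity of ${\rm e}^{-t\Delta^2}$ on $\EPO$ at $t=0$, and the linear estimates in Propositions \ref{LPLQQ} and \ref{pre}.

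For the direction ``integral $\Rightarrow$ weak'', I would decompose $u=v+w$ with $v(t)={\rm e}^{-t\Delta^2}u_0$ and $w(t)=\int_0^t{\rm e}^{-(t-s)\Delta^2}f(u(s))\,ds$. The linear part $v$ is smooth for $t>0$ and satisfies $\partial_t v+\Delta^2 v=0$ classically, so contributes nothing to the distributional equation. For $w$, testing against $\varphi\in C_0^\infty((0,T)\times\R^N)$, applying Fubini in $(s,t)$, and then integrating by parts in $t$ on $[s,T]$, the identity $(\partial_t+\Delta^2){\rm e}^{-(t-s)\Delta^2}f(u(s))=0$ for $t>s$ leaves exactly the boundary contribution at $t=s$, namely $\int_0^T\!\!\int f(u(s))\varphi(s,\cdot)\,dx\,ds$. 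This is the weak form of \eqref{1.1}. For the initial trace, $\|v(t)-u_0\|_{\EP}\to 0$ by Proposition \ref{continu0}, whereas Proposition \ref{pre}(iii) with any $q\ge 2$ satisfying $q>N/4$ yields
\[
\|w(t)\|_{\EP}\leq\frac{\mathcal{H}}{\sqrt{\log 2}}\int_0^t\bigl[(t-s)^{-N/(4q)}\|f(u(s))\|_q+\|f(u(s))\|_2\bigr]\,ds,
\]
which tends to $0$ as $t\searrow 0$; norm convergence in $\EP$ forces the required weak$^*$ convergence.

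For the converse ``weak $\Rightarrow$ integral'', I fix $t\in(0,T)$ and $\psi\in C_0^\infty(\R^N)$ and introduce the backward adjoint profile $\Phi(s,x)=({\rm e}^{-(t-s)\Delta^2}\psi)(x)$ on $[0,t]\times\R^N$, which satisfies $(-\partial_s+\Delta^2)\Phi=0$ and, by Proposition \ref{abir2} together with $\psi\in C_0^\infty$, is Schwartz-class in $x$ uniformly in $s$. I would substitute the truncation $\varphi_{\varepsilon,R}(s,x)=\chi_\varepsilon(s)\eta_R(x)\Phi(s,x)$ into the distributional equation, with $\chi_\varepsilon\nearrow\mathbf{1}_{(0,t)}$ a time cutoff and $\eta_R\nearrow 1$ a space cutoff. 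Passing first $R\to\infty$ by dominated convergence kills the commutator terms carrying $\nabla\eta_R$ and $\Delta^2\eta_R$; passing then $\varepsilon\to 0$, the $-\chi_\varepsilon'\Phi$ contribution converges to the difference of boundary values at $s=t$ and $s=0$, the former handled by $\EPO$-continuity of $u$ and the latter precisely by the weak$^*$ convergence $u(s)\rightharpoonup u_0$ assumed in Definition \ref{weak}. Applying self-adjointness of ${\rm e}^{-\tau\Delta^2}$ (even kernel) and Fubini on the right-hand side collapses the identity to
\[
\int_{\R^N}u(t)\psi\,dx=\int_{\R^N}\Bigl[{\rm e}^{-t\Delta^2}u_0+\int_0^t{\rm e}^{-(t-s)\Delta^2}f(u(s))\,ds\Bigr]\psi\,dx
\]
for all $\psi\in C_0^\infty$, whence \eqref{integral}.

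The only delicate step is this double limit in the converse: the choice of $\Phi$ is made precisely so that $(-\partial_s+\Delta^2)\varphi_{\varepsilon,R}$ collapses to $-\chi_\varepsilon'\eta_R\Phi$ plus $R$-dependent remainders that vanish as $R\to\infty$, and the weak$^*$ hypothesis at $s=0$ enters the argument exactly at the $\varepsilon\to 0$ boundary term; beyond this, the computations are routine applications of dominated convergence using $f(u)\in L^p$ for all $p\ge 2$ from Proposition \ref{step1} and the Schwartz decay of $\Phi$ coming from Proposition \ref{abir2}.
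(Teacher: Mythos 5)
Your argument is essentially correct, but it follows a genuinely different route from the paper. The paper does not carry out a duality proof of Proposition \ref{4.1} at all: it states the proposition after remarking that, thanks to Proposition \ref{step1}, the integral formulation is equivalent, and the only argument it later supplies (in the paragraph ``Proof of the equivalence between the differential and the integral equations'') is a regularity bootstrap showing that a solution $u\in C([0,T];\EPO)$ of \eqref{integral} belongs to $L^{\infty}_{loc}(0,T;L^{\infty})$, via the smoothing estimate \eqref{x} with $p>\max(2,N/4)$ and Proposition \ref{step1}; the equivalence is then left to follow from the classical theory for locally bounded solutions. You instead prove both implications directly at the level of $\EPO$ regularity: the direction ``integral $\Rightarrow$ weak'' by splitting off the free evolution and integrating by parts in Duhamel's term (with the initial trace obtained in norm from Proposition \ref{continu0} and Proposition \ref{pre}(iii) with $q\ge 2$, $q>N/4$), and the converse by testing the distributional equation against truncations of the backward profile ${\rm e}^{-(t-s)\Delta^2}\psi$ and using the weak$^*$ hypothesis only at the $s=0$ boundary term. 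What your approach buys is a self-contained proof of the stated ``if and only if'' that never needs $L^\infty$ bounds on $u$ and makes explicit where each hypothesis of Definition \ref{weak} enters; what the paper's route buys is brevity, at the price of leaving the actual equivalence implicit. One small repair to your sketch: Proposition \ref{abir2} only bounds the kernel itself, not its derivatives, so to control $\Delta^2\Phi$ and the commutator terms with $\eta_R$ you should commute the derivatives with the semigroup so they fall on $\psi\in C_0^\infty(\R^N)$ (i.e.\ $\Delta^2\Phi(s,\cdot)={\rm e}^{-(t-s)\Delta^2}\Delta^2\psi$) and then apply the kernel bound; with that adjustment the dominated-convergence steps you invoke go through, and the uniform-in-$s$ bounds near $s=t$ follow likewise since the derivatives of $\psi$ are fixed Schwartz functions.
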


Now we are ready to prove Theorem \ref{local}. As explained in the introduction, the idea here is to split the initial data $u_0\in\EPO$ into a small part in $\EP$ and a smooth one. This will be done using the density of $C^\infty_0(\R^N)$ in $\EPO$. First we solve the initial value problem with smooth initial data to obtain a local and bounded solution $v$. Then we consider the perturbed equation satisfied by $w := u-v$ and with small initial data. Now we come to the details. For $\varepsilon>0$ to be chosen later, we write $u_0=v_0+w_0$, where $v_0\in C_0^\infty(\R^N)$ and $\|w_0\|_{\EP}\leq \varepsilon$. Then, we consider the two Cauchy problems:
\begin{equation*}
(\mathcal{P}_1)\qquad\left\{
\begin{array}{ll}
\partial_t v+\Delta^2 v=f(v), & \qquad t>0,\,x\in\R^N,\qquad\qquad\qquad\\
v(0)=v_0,&
\end{array}
\right.
\end{equation*}

and

\begin{equation*}
(\mathcal{P}_2)\qquad\left\{
\begin{array}{ll}
\partial_t w+\Delta^2 w=f(w+v)-f(v), & \qquad t>0,\,x\in\R^N, \\
w(0)=w_0.&
  \end{array}
\right.
\end{equation*}
We prove the following existence result concerning $(\mathcal{P}_1).$
\begin{proposition}\label{C0} Let $v_0\in L^2\cap L^{\infty}$. Then there exist a time $T>0$ and a mild solution $v\in C([0,T],\EPO)\cap L^{\infty}(0,T;L^{\infty})$ to $(\mathcal{P}_1)$.
\end{proposition}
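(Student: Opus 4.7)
The strategy is a Banach fixed-point argument for the integral equation
\[
v(t) = e^{-t\Delta^2} v_0 + \int_0^t e^{-(t-s)\Delta^2} f(v(s))\, ds
\]
carried out in the space $X_T := L^\infty\big((0,T);\, L^2 \cap L^\infty\big)$ equipped with the norm $\|v\|_{X_T} := \mathrm{ess\,sup}_{t \in [0,T]}\big(\|v(t)\|_2 + \|v(t)\|_\infty\big)$, followed by an upgrade of the solution to be $\EPO$-continuous in time.

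First I would set up the fixed point. Let $R := 2\mathcal{H}(\|v_0\|_2 + \|v_0\|_\infty) + 1$ and $B_R := \{v \in X_T : \|v\|_{X_T} \le R\}$, which is complete. For $v \in B_R$, applying \eqref{1.9} with second argument $0$ (and using $f(0) = 0$) yields pointwise $|f(v)| \le 2C e^{\lambda R^2} |v|$, so $\|f(v(s))\|_p \le 2C e^{\lambda R^2} R$ for $p \in \{2, \infty\}$. Proposition \ref{LPLQQ} with $p = q$ gives $\|e^{-\tau\Delta^2}\varphi\|_p \le \mathcal{H} \|\varphi\|_p$, whence for the map $\Phi$ sending $v$ to the right-hand side of the integral equation,
\[
\|\Phi(v)(t)\|_p \le \mathcal{H} \|v_0\|_p + 2C\mathcal{H}\, e^{\lambda R^2} R\, t, \qquad p \in \{2,\infty\}.
\]
Choosing $T$ small enough (depending on $R$ and $\lambda$) gives $\Phi(B_R) \subset B_R$. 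The same hypothesis \eqref{1.9} implies $|f(u) - f(v)| \le 2C e^{\lambda R^2} |u - v|$ on $B_R \times B_R$, so $\|\Phi(u) - \Phi(v)\|_{X_T} \le 4C\mathcal{H} e^{\lambda R^2} T \|u - v\|_{X_T}$, which is a contraction for $T$ shrunk further. Banach's theorem produces a unique fixed point $v \in B_R$, giving a mild solution in $L^\infty(0,T; L^\infty)$.

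The second step is continuity in $\EPO$. The elementary bound
\[
\int_{\R^N} \Big(e^{|w|^2/\alpha^2} - 1\Big)\, dx = \sum_{k \ge 1} \frac{\|w\|_{2k}^{2k}}{k!\, \alpha^{2k}} \le \frac{\|w\|_2^2}{\|w\|_\infty^2}\Big(e^{\|w\|_\infty^2/\alpha^2} - 1\Big)
\]
(valid whenever $w \in L^2 \cap L^\infty$) shows in particular that $L^2 \cap L^\infty \subset \EPO$, so $v(t) \in \EPO$ for every $t$. For the time continuity, I would first establish $v \in C([0,T]; L^2)$: the strong continuity of $e^{-\tau\Delta^2}$ on $L^2$ gives continuity of the linear term $e^{-t\Delta^2} v_0$, and since $f(v) \in L^\infty(0,T; L^2)$, a standard dominated-convergence estimate handles the Duhamel integral. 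Setting $w := v(t_2) - v(t_1)$, the $X_T$-bound yields $\|w\|_\infty \le 2R$ while $\|w\|_2 \to 0$ as $t_2 \to t_1$; the displayed inequality then gives
\[
\|w\|_{\EP} \le \frac{\|w\|_\infty}{\sqrt{\log\big(1 + \|w\|_\infty^2/\|w\|_2^2\big)}} \xrightarrow[t_2 \to t_1]{} 0.
\]

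The main obstacle is the last step. The direct route — invoking Proposition \ref{continu0} on the linear part and then passing a limit inside the Duhamel integral in the $\EP$-norm — is delicate because $e^{-\tau\Delta^2}$ is \emph{not} $\EP$-continuous at $\tau = 0$ by Proposition \ref{pdiscontinuite}, so one must funnel through $\EPO$-membership of $f(v(s))$ and dominated convergence. The interpolation-type inequality above sidesteps this entirely by packaging the $L^2$-continuity with the uniform $L^\infty$-bound to recover continuity in $\EP$.
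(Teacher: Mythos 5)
Your proof is correct, and the fixed-point half is essentially the paper's argument: the paper also runs Banach's theorem for $\Phi$ with the norm $\|v\|_{T}=\|v\|_{L^\infty(0,T;L^2)}+\|v\|_{L^\infty(0,T;L^\infty)}$, using the $L^\infty$ bound to freeze the exponential factor and taking $T$ small. Where you genuinely diverge is the $\EPO$-continuity. The paper bakes $C([0,T];\EPO)$ into the fixed-point space $\mathcal{Y}(T)$ and checks stability of $\Phi$ there by combining Proposition \ref{continu0} (the semigroup is $C^0$ on $\EPO$) with the observation that $f(v)\in L^1(0,T;\EPO)$ for $v\in \mathcal{Y}(T)$; you instead work in the larger space $L^\infty(0,T;L^2\cap L^\infty)$ and recover continuity a posteriori from $L^2$-continuity plus the uniform $L^\infty$ bound, via the log-interpolation estimate $\|w\|_{\EP}\le \|w\|_\infty\big(\log(1+\|w\|_\infty^2/\|w\|_2^2)\big)^{-1/2}$. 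Your route buys independence from the Orlicz-space semigroup machinery for the Duhamel term (only strong continuity on $L^2$ is needed), at the price of a separate continuity step; the paper's route is shorter once Proposition \ref{continu0} is available, since continuity comes for free from the ambient space. Two small points to tighten in your write-up: to pass from your displayed bound to convergence you should note that $x\mapsto x\big(\log(1+x^2/\varepsilon^2)\big)^{-1/2}$ is nondecreasing (since $u/(1+u)\le\log(1+u)$), so that $\|w\|_{\EP}\le 2R\big(\log(1+4R^2/\|w\|_2^2)\big)^{-1/2}\to 0$ uniformly as $\|w\|_2\to 0$; and since $\EPO$ is by definition closed in $\EP$, your pointwise membership $v(t)\in\EPO$ together with $\EP$-continuity indeed yields $v\in C([0,T];\EPO)$, which is the statement claimed.
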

\begin{proof}[Proof of Proposition \ref{C0}] We use a fixed point argument. We introduce, for any positive time $T$  the following complete metric space
$${\mathcal Y}(T):=\Big\{\;v\in C([0,T];\EPO)\cap L^{\infty}(0,T;L^{\infty});\quad \|v\|_{T}\leq 2{\mathcal H}\|v_0\|_{L^2\cap L^{\infty}}\Big\},$$
where $\|v\|_{T}:=\|v\|_{L^{\infty}(0,T;L^{2})}+\|v\|_{L^{\infty}(0,T;L^{\infty})},$ and $\|v_0\|_{L^2\cap L^{\infty}}=\|v_0\|_{2}+\|v_0\|_{{\infty}}$.
Set
$$\Phi(v)(t):={\rm e}^{-t\Delta^2}v_0+\int_0^{t}{\rm e}^{-(t-s)\Delta^2}f(v(s))\,ds.$$

We will prove that if $T>0$ is small enough then $\Phi$ is a contraction map from ${\mathcal Y}(T)$ into itself. First let us remark that by Proposition \ref{continu0} and the fact that $f(v)\in L^1(0,T; \EPO)$ whenever $v\in C([0,T];\EPO)\cap L^{\infty}(0,T;L^{\infty})$,
we have $\Phi(v)\in C([0,T];\EPO)\cap L^{\infty}(0,T;L^{\infty})$. Now, for every $v_1,\, v_2\in {\mathcal Y}(T)$, we have thanks to \eqref{Nonlin},
\begin{eqnarray*}
\|\Phi(v_1)-\Phi(v_2)\|_{L^{\infty}(0,T;L^q)}&\leq& {\mathcal H}\int_0^T \|f(v_1(s))-f(v_2(s))\|_{q}\,ds\\
 &\leq& {\mathcal H}T \|f(v_1)-f(v_2)\|_{L^{\infty}(0,T;L^q)}\\
&\leq& C{\mathcal H}T\left({\rm e}^{\lambda  \|v_1\|^2_{L^\infty_t(L^\infty_x)}}+{\rm e}^{\lambda\|v_2\|^2_{L^\infty_t(L^\infty_x)}}\right)\|v_1-v_2\|_{L^\infty(0,T; L^q)}
\end{eqnarray*}
where $q=2$ or $q=\infty$. Then, it follows that
\begin{equation}\label{4.3}
 \|\Phi(v_1)-\Phi(v_2)\|_{T}\leq 2C\,{\mathcal H}\,T\,{ {\rm e}^{4\lambda {\mathcal H}^2\|v_0\|_{L^2\cap L^\infty}^2}}\|v_1-v_2\|_{T}.
\end{equation}
Similarly we have
\begin{equation}\label{4.6}
  \|\Phi(v)\|_{T}  \leq C\,{\mathcal H}\,T\,{ {\rm e}^{4\lambda {\mathcal H}^2\|v_0\|_{L^2\cap L^\infty}^2}}\,\|v\|_{T}+\mathcal{H}\|v_0\|_{L^2\cap L^{\infty}}.
\end{equation}
From \eqref{4.3} and \eqref{4.6} we conclude that for $T=T(\|v_0\|_{L^2\cap L^\infty})>0$ small enough, $\Phi$ is a contraction map on ${\mathcal Y}(T)$. This finishes the proof of Proposition \ref{C0}.
\end{proof}

We now prove the following concerning problem $(\mathcal{P}_2).$
\begin{proposition}\label{UU2} Let $T>0$ and $v\in L^{\infty}(0,T;L^{\infty})$. Let $w_0\in\EPO$. Then for $\|w_0\|_{\EP}\leq \varepsilon$, with $\varepsilon>0$ small enough,
there exist a time $\widetilde{T}=\widetilde{T}(w_0,\varepsilon,v)>0$ and a mild solution $w\in C([0,\widetilde{T}],\EPO)$ to problem $(\mathcal{P}_2)$.
\end{proposition}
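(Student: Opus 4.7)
My plan is to run a Banach contraction argument on a small ball of $C([0,\widetilde T]; \EPO)$, freezing $v$ as an $L^\infty$ coefficient. Introduce the operator
\begin{equation*}
\Psi(w)(t) := e^{-t\Delta^2}w_0 + \int_0^t e^{-(t-s)\Delta^2}\bigl[f(w(s)+v(s)) - f(v(s))\bigr]\,ds,
\end{equation*}
and work in the complete metric space
\begin{equation*}
\mathcal{Z}(\widetilde T) := \bigl\{w \in C([0,\widetilde T];\EPO) : \|w\|_{L^\infty(0,\widetilde T;\EP)} \leq 2\mathcal{H}\varepsilon\bigr\}
\end{equation*}
endowed with the induced $L^\infty(\EP)$ distance. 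The aim is to choose $\varepsilon$ small (depending only on $\lambda$ and a fixed auxiliary exponent $q$) and $\widetilde T = \widetilde T(w_0,\varepsilon,v)$ small so that $\Psi$ becomes a contraction self-map on $\mathcal{Z}(\widetilde T)$.

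The heart of the argument is a bound on the nonlinearity in $L^r$. Set $V := \|v\|_{L^\infty(0,T;L^\infty)}$. From \eqref{Nonlin} together with $(w+v)^2 \leq 2w^2 + 2v^2$,
\begin{equation*}
|f(w+v) - f(v)| \leq C\,e^{2\lambda V^2}\,|w|\bigl(e^{2\lambda w^2} + 1\bigr),
\end{equation*}
so $V$ contributes only a multiplicative constant. Fix $q \geq 2$ with $q > N/4$. The splitting $|w|e^{2\lambda w^2} = |w|(e^{2\lambda w^2}-1) + |w|$, H\"older with conjugate pair $(r,2r)$, Lemma \ref{sarah55} (bounding $\|w\|_r$ and $\|w\|_{2r}$ by $\|w\|_{\EP}$), and Lemma \ref{med} (applicable once $\varepsilon$ is chosen so small that $16\lambda q\mathcal{H}^2\varepsilon^2 \leq 1$) together yield
\begin{equation*}
\|f(w+v) - f(v)\|_r \leq C\,e^{2\lambda V^2}\,\|w\|_{\EP},\qquad r\in\{2,q\}.
\end{equation*}
Inserting these estimates into Proposition \ref{pre}(iii) pointwise in $s$ and integrating, combined with $\|e^{-t\Delta^2}w_0\|_{\EP} \leq \mathcal{H}\varepsilon$ from Proposition \ref{pre}(i), delivers
\begin{equation*}
\|\Psi(w)\|_{L^\infty(0,\widetilde T;\EP)} \leq \mathcal{H}\varepsilon + C\,e^{2\lambda V^2}\bigl(\widetilde T^{\,1-N/(4q)}+\widetilde T\bigr)\|w\|_{L^\infty(\EP)}.
\end{equation*}
Choosing $\widetilde T$ so that the Duhamel prefactor is at most $1/2$ produces the self-map bound $\|\Psi(w)\|_{L^\infty(\EP)} \leq 2\mathcal{H}\varepsilon$. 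An identical computation starting from $|f(w_1+v)-f(w_2+v)| \leq C\,e^{2\lambda V^2}|w_1-w_2|(e^{2\lambda w_1^2}+e^{2\lambda w_2^2})$ yields a strict contraction after possibly further shrinking $\widetilde T$.

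The main technical obstacle is to verify that $\Psi(w)$ actually takes values in $\EPO$ and not merely in $\EP$, so that $\mathcal{Z}(\widetilde T)$ is a genuine closed subset of $C([0,\widetilde T];\EPO)$. The linear piece is handled by Proposition \ref{continu0}. For the Duhamel term, an adaptation of the continuity argument of Proposition \ref{step1} shows that $f(w+v)-f(v) \in C([0,\widetilde T]; L^2 \cap L^q)$ whenever $w\in C([0,\widetilde T];\EPO)$ and $v\in L^\infty$; then each $e^{-(t-s)\Delta^2}[f(w+v)-f(v)]$ belongs to $L^2 \cap L^\infty \hookrightarrow \EPO$ by Proposition \ref{LPLQQ} and Lemma \ref{Orl-Leb}(iv), while continuity in $t$ (including at $t = 0$) follows from the uniform $\EP$ bound derived above. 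The Banach fixed-point theorem applied in $\mathcal{Z}(\widetilde T)$ then produces the unique mild solution $w \in C([0,\widetilde T];\EPO)$ of $(\mathcal{P}_2)$.
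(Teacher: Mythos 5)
Your proposal is correct and follows essentially the same route as the paper: a Banach fixed point for the same Duhamel map on a small ball of $C([0,\widetilde T];\EPO)$ with the $L^{\infty}(\EP)$ metric, with the nonlinearity controlled in $L^2$ and $L^q$ ($q>N/4$) via H\"older, Lemma \ref{sarah55} and Lemma \ref{med}, and the $\EP$ bound recovered through the smoothing estimates and the embedding $L^2\cap L^\infty\hookrightarrow\EP$ (your use of Proposition \ref{pre}(iii) is just a packaged form of the paper's split into $L^2$ and $L^\infty$ bounds). The only cosmetic difference is that you fix the ball radius as $2\mathcal{H}\varepsilon$ while the paper keeps a separate parameter $\widetilde M$ subject to $4\lambda p\widetilde M^2\leq 1$; both choices are compatible with Lemma \ref{med}.
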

To prove Proposition \ref{UU2} we establish first the following lemma.
\begin{lemma}\label{4.3333}Let $v\in L^{\infty}$ and $w_1,\,w_2\in \EP$ with $\|w_1\|_{\EP},\, \|w_2\|_{\EP}\leq { K}$ for some constant ${ K}>0$. Let $2\leq q<\infty$, and assume that $4\lambda q { K}^2\leq 1$ where $\lambda$ is given by \eqref{Nonlin}. Then there exists a constant $C_q=C(q)>0$ such that
$$\Big\|f(w_1+v)-f(w_2+v)\Big\|_{q}\leq\,C_q\,{\rm e}^{2\lambda \|v\|_{\infty}^2}\Big\|w_1-w_2\Big\|_{\EP}.$$
\end{lemma}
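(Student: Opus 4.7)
The plan is to carry out the same kind of chain of estimates that works for analogous exponential-growth nonlinearities, splitting off the $L^\infty$ function $v$ via the elementary inequality $(w_i+v)^2\leq 2w_i^2+2v^2$, then reducing the bound for $f(w_1+v)-f(w_2+v)$ to a product of a linear factor in $w_1-w_2$ and an exponential factor $e^{2\lambda w_i^2}-1$, and finally handling each factor by Lemma \ref{sarah55} and Lemma \ref{med}.

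More precisely, first I would apply the structural assumption \eqref{Nonlin} with arguments $w_1+v$ and $w_2+v$ to obtain the pointwise bound
\[
|f(w_1+v)-f(w_2+v)|\leq C\,|w_1-w_2|\bigl(e^{\lambda(w_1+v)^2}+e^{\lambda(w_2+v)^2}\bigr).
\]
Using $(w_i+v)^2\leq 2w_i^2+2v^2$ and $v\in L^\infty$, this becomes
\[
|f(w_1+v)-f(w_2+v)|\leq C\,e^{2\lambda\|v\|_\infty^2}\,|w_1-w_2|\bigl(e^{2\lambda w_1^2}+e^{2\lambda w_2^2}\bigr).
\]
The next step is to write $e^{2\lambda w_i^2}=1+(e^{2\lambda w_i^2}-1)$ and apply H\"older's inequality with the conjugate exponents $2$ and $2$, producing
\[
\|f(w_1+v)-f(w_2+v)\|_q\leq C\,e^{2\lambda\|v\|_\infty^2}\Bigl(2\|w_1-w_2\|_q+\|w_1-w_2\|_{2q}\sum_{i=1}^2\|e^{2\lambda w_i^2}-1\|_{2q}\Bigr).
\]

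To bound the Lebesgue norms of $w_1-w_2$ by the Orlicz norm I invoke Lemma \ref{sarah55}, which gives
\[
\|w_1-w_2\|_q\leq \bigl(\Gamma(\tfrac{q}{2}+1)\bigr)^{1/q}\|w_1-w_2\|_{\EP},\qquad \|w_1-w_2\|_{2q}\leq \bigl(\Gamma(q+1)\bigr)^{1/(2q)}\|w_1-w_2\|_{\EP}.
\]
For the exponential factors I would apply Lemma \ref{med} with the substitutions $\lambda\mapsto 2\lambda$ and $p\mapsto 2q$: the hypothesis of that lemma becomes $2\lambda\cdot 2q\cdot K^2\leq 1$, which is precisely the assumption $4\lambda qK^2\leq 1$, so
\[
\|e^{2\lambda w_i^2}-1\|_{2q}\leq (4\lambda q K^2)^{1/(2q)}\leq 1,\qquad i=1,2.
\]

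Combining these bounds yields
\[
\|f(w_1+v)-f(w_2+v)\|_q\leq C\,e^{2\lambda\|v\|_\infty^2}\Bigl(2\bigl(\Gamma(\tfrac{q}{2}+1)\bigr)^{1/q}+2\bigl(\Gamma(q+1)\bigr)^{1/(2q)}\Bigr)\|w_1-w_2\|_{\EP},
\]
and setting $C_q$ equal to the $q$-dependent constant in the parenthesis (multiplied by $C$) gives the desired inequality. There is no single hard step here; the only point requiring care is the bookkeeping of exponents, namely arranging the H\"older pairing so that Lemma \ref{med} is applied at exponent $2q$ with the doubled parameter $2\lambda$, which is exactly what makes the smallness hypothesis $4\lambda qK^2\leq 1$ match.
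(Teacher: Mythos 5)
Your proposal is correct and follows essentially the same route as the paper: the pointwise use of \eqref{Nonlin} with $(w_i+v)^2\leq 2w_i^2+2v^2$, the splitting ${\rm e}^{2\lambda w_i^2}=1+({\rm e}^{2\lambda w_i^2}-1)$, H\"older at exponent $2q$, and then Lemma \ref{sarah55} together with Lemma \ref{med} applied with $2\lambda$ and $2q$ so that the hypothesis $4\lambda qK^2\leq 1$ is exactly what is needed. No gaps.
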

\begin{proof}[Proof of the Lemma \ref{4.3333}] By the assumption \eqref{Nonlin} on $f$, we have
\begin{eqnarray*}
\Big\|f(w_1+v)-f(w_2+v)\Big\|_{q}&\leq& C \Big\||w_1-w_2|\left( {\rm e}^{2\lambda w_1^2+2\lambda v^2}+{\rm e}^{2\lambda w_2^2+2\lambda v^2}\right)\Big\|_{q}\\&&\hspace{-4cm}\leq  {\rm e}^{2\lambda \|v\|_{\infty}^2}\bigg(2C\Big\|w_1-w_2\Big\|_{q}+C\Big\||w_1-w_2|\left({\rm e}^{2\lambda w_1^2}-1\right)\Big\|_{q}\bigg)\\&&\hspace{-3cm}+C{\rm e}^{2\lambda \|v\|_{\infty}^2}\,\Big\||w_1-w_2|\left({\rm e}^{2\lambda w_2^2}-1\right)\Big\|_{q}\\&&\hspace{-4cm}\leq{\rm e}^{2\lambda \|v\|_{\infty}^2}\bigg(2C\Big\|w_1-w_2\Big\|_{q}+C\Big\|w_1-w_2\Big\|_{{2q}}\Big\|{\rm e}^{2\lambda w_1^2}-1\Big\|_{{2q}}\bigg)\\&&\hspace{-3cm}+C{\rm e}^{2\lambda \|v\|_{\infty}^2}\,\Big\|w_1-w_2\Big\|_{{2q}}\Big\|{\rm e}^{2\lambda w_2^2}-1\Big\|_{{2q}}\\&&\hspace{-4cm}\leq\,C_q\,{\rm e}^{2\lambda \|v\|_{\infty}^2}\Big\|w_1-w_2\Big\|_{\EP},
\end{eqnarray*}
where we have used H\"older inequality, Lemma \ref{sarah55} and Lemma \ref{med}. This finishes the proof of Lemma \ref{4.3333}.
\end{proof}
Now we come to the proof of Proposition \ref{UU2}.
\begin{proof}[Proof of Proposition \ref{UU2}]
For $\widetilde{T}>0$, $\widetilde{M}>0$, we define
$$W_{\widetilde{M},\widetilde{T}}:=\left\{w\in C([0,\widetilde{T}],\EPO);\; \|w\|_{L^{\infty}(0,\widetilde{T};\EP)}\leq \widetilde{M}\right\}.$$ On the space $W_{\widetilde{M},\widetilde{T}}$ consider the map
$$\mathcal{F}(w)(t):={\rm e}^{-t\Delta^2}w_0+\int_0^{t}{\rm e}^{-(t-s)\Delta^2}\Big(f(w(s)+v(s))-f(v(s))\Big)\,ds.$$
We will prove that for $\widetilde{M}>0$ and $\widetilde{T}>0$ sufficiently small,  $\mathcal{F}$ is a contraction map from $W_{\widetilde{M},\widetilde{T}}$ into itself. From the embedding $L^2\cap L^{\infty}\hookrightarrow\EP$, we have
\begin{equation}
\label{(4.11)}
 \|\mathcal{F}(w_1)-\mathcal{F}(w_2)\|_{\EP}\leq\frac{1}{\sqrt{\log 2}}\,\bigg(\|\mathcal{F}(w_1)-\mathcal{F}(w_2)\|_{2}+\|\mathcal{F}(w_1)-\mathcal{F}(w_2)\|_{{\infty}}\bigg).
\end{equation}
Fix $p>\max\left(2,\,\frac{N}{4}\right).$ Then, thanks to Proposition \ref{LPLQQ} and Lemma \ref{4.3333}, we have
\begin{eqnarray}\label{(4.12)}\nonumber
\|\mathcal{F}(w_1)-\mathcal{F}(w_2)\|_{{\infty}}&\leq &\mathcal{H}\int_0^{t}(t-s)^{-\frac{N}{4p}}\|f(w_1(s)+v(s))-f(w_2(s)+v(s))\|_{p}\,ds\\
  &\leq&C_p\,\mathcal{H}\,{\rm e}^{2\lambda \|v\|_{\infty}^2}\biggr(\int_0^t\,(t-s) ^{-\frac{N}{4p}}\,ds\biggl)\|w_1-w_2\|_{\EP}\nonumber\\
  &\leq&  C_p\,\mathcal{H}\, {\rm e}^{2\lambda \|v\|_{\infty}^2}\widetilde{T}^{1-\frac{N}{4p}}\|w_1-w_2\|_{\EP}.
\end{eqnarray}
Applying again Lemma \ref{4.3333} with $q=2$ and $\widetilde{M}$ sufficiently small, it holds
\begin{eqnarray}\label{(4.13)}
  \|\mathcal{F}(w_1)-\mathcal{F}(w_2)\|_{2} &\leq&C \mathcal{H}\, {\rm e}^{2\lambda \|v\|_{\infty}^2}\int_0^t\|w_1-w_2\|_{\EP} ds\nonumber\\
  &\leq&  C \mathcal{H}\, {\rm e}^{2\lambda \|v\|_{\infty}^2}\widetilde{T}\|w_1-w_2\|_{\EP}.
\end{eqnarray}
Plugging \eqref{(4.11)}, \eqref{(4.12)} and \eqref{(4.13)} together, we get, for $w_1,w_2\in W_{\widetilde{M}, \widetilde{T}}$,
\begin{equation}\label{(4.14)}
\|\mathcal{F}(w_1)-\mathcal{F}(w_2)\|_{L^{\infty}(0, \widetilde{T};\,\EP)}\leq C{\rm e}^{2\lambda \|v\|_{\infty}^2}\left(\widetilde{T}+\widetilde{T}^{1-\frac{N}{4\,p}}\right)\|w_1-w_2\|_{L^{\infty}{(0, \widetilde{T}};\,\EP)}.
\end{equation}
The estimates \eqref{(4.12)} and \eqref{(4.13)} with $w_2=0$ show that the nonlinear term satisfies
$$\mathcal{F}(w)-{\rm e}^{-t\Delta^2}w_0\in L^{\infty}(0,\widetilde{T};\, L^2\cap L^{\infty}).$$
Since $f(w+v)-f(v) \in L^1(0, \widetilde{T}; \EPO)$, it follows that
\begin{equation}\label{(4.15)}
 \mathcal{F}(w)-{\rm e}^{-t\Delta^2}w_0\in C([0,\widetilde{T}];\exp L_0^2).
\end{equation}
Moreover, by Proposition \ref{continu0}, we have
$${\rm e}^{-t\Delta^2}w_0\in C([0,\widetilde{T}];\exp L^2_0).$$
This and \eqref{(4.15)} proves that $ \mathcal{F}(w)\in  C([0,\widetilde{T}];\exp L^2_0).$ Now again by \eqref{(4.14)} with $w_2=0,$ we have
\begin{equation}\label{(4.16)}
\|\mathcal{F}(w)\|_{L^{\infty}(\exp L^2)}\leq \mathcal{H}\|w_0\|_{\exp L^2}+ C{\rm e}^{2\lambda \|v\|_{\infty}^2}\left(\widetilde{T}+\widetilde{T}^{1-\frac{N}{4\,p}}\right)\|w\|_{L^{\infty}(\exp L^2)}.
\end{equation}
 Choose $\varepsilon>0$, $\widetilde{M}>0$ and $\widetilde{T}>0$ such that
\begin{equation}
\label{lem4.4hold}
4\lambda p \widetilde{M}^2\leq 1,\; p>\max\left(2, N/4\right),\end{equation}
\begin{equation}
\label{contraction}
C{\rm e}^{2\lambda \|v\|_{\infty}^2}\left(\widetilde{T}+\widetilde{T}^{1-\frac{N}{4\,p}}\right)<1,
\end{equation}
and
\begin{equation}
\label{stability}
\mathcal{H}\varepsilon + C{\rm e}^{2\lambda \|v\|_{\infty}^2}\left(\widetilde{T}+\widetilde{T}^{1-\frac{N}{4\,p}}\right)\widetilde{M}\leq \widetilde{M}.
\end{equation}
In particular, Lemma \ref{4.3333} holds for  $K=\mathcal{H}\varepsilon$ or $K=\widetilde{M}$ and $q=p$.
Hence, $\mathcal{F}$ is a contraction map from $W_{\widetilde{M}, \widetilde{T}}$ into itself. This proves Proposition \ref{UU2}.
\end{proof}
We now prove that  if $v$ and $w$ are mild solutions of $(\mathcal{P}_1)$ and $(\mathcal{P}_2)$ respectively, then $u:=v+w$ is a solution of \eqref{integral}.
\begin{proof}[Proof of the existence part in Theorem \ref{local}]  Let $p>\max(2,{N\over 4}).$ Fix $\varepsilon>0$ such that  $$\left(4\lambda p\right)^{1/2}2\mathcal{H}\varepsilon\leq {1\over 2}.$$ Next one can decompose $u_0 = v_0 + w_0$ with $v_0\in C_0^{\infty}(\R^N)$ and $\|w_0\|_{\EP} < \varepsilon$.
By Proposition \ref{C0}, there exists $v$ solution of $(\mathcal{P}_1)$ on $[0,T_1], T_1=T_1(\|v_0\|_{L^2\cap L^{\infty}})$ and $\|v\|_{T_1}\leq 2{\mathcal H}\|v_0\|_{L^2\cap L^{\infty}}.$ Choose $\widetilde{T}>0,\; \widetilde{T}<T_1, $ such that
$$C{\rm e}^{8\lambda {\mathcal H}^2\|v_0\|_{L^2\cap L^{\infty}}^2}\left(\widetilde{T}+\widetilde{T}^{1-\frac{N}{4\,p}}\right)<{1\over 2}.$$ Choose now $\widetilde{M}>0$ such that
$$\mathcal{H}\varepsilon+{1\over 2}\widetilde{M}\leq \widetilde{M},\; \widetilde{M} \leq {1\over \left(4\lambda p\right)^{1/2}},$$
that is $$2\mathcal{H}\varepsilon\leq \widetilde{M}\leq {1\over \left(4\lambda p\right)^{1/2}}.$$
This is possible by the condition on $\varepsilon$.

Now the assumptions on $\widetilde{T}$ and $\widetilde{M}$ imply that
$$\mathcal{H}\varepsilon+ C{\rm e}^{8\lambda {\mathcal H}\|v_0\|_{L^2\cap L^{\infty}}^2}\left(\widetilde{T}+\widetilde{T}^{1-\frac{N}{4\,p}}\right)\widetilde{M}\leq \mathcal{H}\varepsilon+{1\over 2}\widetilde{M}\leq \widetilde{M},$$
$$C{\rm e}^{8\lambda {\mathcal H}^2\|v_0\|_{L^2\cap L^{\infty}}^2}\left(\widetilde{T}+\widetilde{T}^{1-\frac{N}{4\,p}}\right)<1.$$
and $$4\lambda p\widetilde{M}^2\leq 1.$$
Then $\widetilde{M}$ satisfies the hypotheses of  Lemma \ref{4.3333} with $K=\widetilde{M}.$ Using the fact that $\|v\|_{T_1}\leq 2{\mathcal H}\|v_0\|_{L^2\cap L^{\infty}},\; \widetilde{T}<T_1$, then \eqref{(4.16)} and \eqref{(4.14)} are satisfied. Then by Proposition \ref{UU2} there exists $w$ a solution of $(\mathcal{P}_2)$ on $[0, \widetilde{T}].$ Hence $u:=v+w$ is a solution of \eqref{integral} in $C([0,\min\{T_1,\widetilde{T}\}=T]; \EPO)$.
\end{proof}

Let us now prove that $u$ is solution of \eqref{1.1}.

\begin{proof}[Proof of the equivalence between the differential and the integral equations] We claim that the solution $u$ constructed above belongs to $L^{\infty}_{loc}(0,T;L^{\infty})$. Let $u\in C([0, T];\EPO)$ be the solution of the integral equation \eqref{integral}. Then using smoothing effect \eqref{x}, Lemma \ref{sarah55} and Proposition \ref{pre}, we get for $t>0$,
\begin{eqnarray*}
\|{\rm e}^{-t\Delta^2}u_0\|_{{\infty}}&\leq&\mathcal{H}\left({t\over 2}\right)^{-{N\over 8}}\, \|{\rm e}^{-{t\over 2}\Delta^2}u_0\|_{2}\\
&\leq& C t^{-{N\over 8}}\,\|{\rm e}^{-{t\over 2}\Delta^2}u_0\|_{\EP}\\
&\leq&C t^{-{N\over 8}}\,\|u_0\|_{\EP}<\infty,
 \end{eqnarray*}
 and hence  ${\rm e}^{-t\Delta^2}u_0\in L^{\infty}$. Let us consider now the nonlinear term. Fix $p > \max\left(2,\,\frac{N}{4}\right)$. It follows from \eqref{x} and the assumption \eqref{Nonlin} on $f$  that there exists some  constant $\tilde{C}>\lambda \,p$, such that for any $0 < t < T$,
\begin{eqnarray*}
\int_0^t\|{\rm e}^{-(t-s)\Delta^2}f(u(s))\|_{{\infty}}\, ds&\leq& \mathcal{H}\int_0^t (t-s)^{-\frac{N}{4 p}}\|f(u(s))\|_{p} \,ds\\
&\leq& C\int_0^t (t-s)^{-\frac{N}{4 p}}\left(\int_{\R^N}|u(s)|^p\,{\rm e}^{\lambda pu^2(s)}\,dx\right)^{\frac{1}{p}}\,ds\\
&\leq&C\int_0^t (t-s)^{-\frac{N}{4 p}}\left(\int_{\R^N}\left({\rm e}^{\tilde{C}u^2(s)}-1\right)\,dx\right)^{\frac{1}{p}}\,ds\\
&\leq&C t^{1-\frac{N}{4 p}}\displaystyle\sup_{0\leq s\leq T}\left(\int_{\R^N}\left({\rm e}^{\tilde{C}u^2(s)}-1\right)\,dx\right)^{\frac{1}{p}}.
\end{eqnarray*}
Since $u\in C([0, T];\EPO)$, then by Proposition \ref{step1}
\begin{equation*}
   \sup_{0\leq s\leq T}\left(\int_{\R^N}\left({\rm e}^{\tilde{C}u^2(s)}-1\right)\,dx\right)^{\frac{1}{p}}< \infty.
\end{equation*}
This shows that $u\in L^{\infty}_{loc}(0,T;L^{\infty})$ and the claim follows.
\end{proof}
\begin{proof}[Proof of the uniqueness part in Theorem \ref{local}] Let $u,\, v\in C([0,T];\EPO)$ be two solutions of \eqref{integral} with the same initial data $u(0)=v(0)=u_0.$ Let
$$t_0=\sup\biggl\{t\in[0,T]\,\;\;\mbox{such \,that}\,\,\;\;u(s)=v(s)\,\,\;\; \mbox{for  every}\,\;\;s\in[0,t] \,\biggr\}.$$
Let us suppose by contradiction that $0\leq t_0<T.$ Since $u(t)$ and $v(t)$ are continuous in time we have $u(t_0)=v(t_0).$ Let us denote  by $\tilde{u}(t):=u(t+t_0)$ and $\tilde{v}(t):=v(t+t_0)$. Then $\tilde{u}$ and $\tilde{v}$ satisfy \eqref{integral} on $(0,T-t_0]$ and $\tilde{u}(0)=\tilde{v}(0)=u(t_0).$ We will prove that there exists a positive time $0<\tilde{t}\leq T-t_0$ such that
\begin{equation}\label{5.1}
\sup_{0<t<\tilde{t}}\|\tilde{u}-\tilde{v}\|_{\EP}\leq C(\tilde{t})\, \sup_{0<t<\tilde{t}}\|\tilde{u}-\tilde{v}\|_{\EP},
\end{equation}
for a constant $C(\tilde{t})<1,$ and so $\tilde{u}(t)=\tilde{v}(t)$ for any $t\in[0,\tilde{t}].$ Therefore $u(t+t_0)=v(t+t_0)$ for any $t\in[0,\tilde{t}]$ in contradiction with the definition of $t_0$. In order to establish inequality \eqref{5.1} we control both the $L^2-$norm and the $L^{\infty}-$norm of the
difference of the two solutions. Thanks to Proposition \ref{LPLQQ} and H\"older inequality for some $p,q$ such that $\frac{1}{p}+\frac{1}{q}=\frac{1}{2}$ and $p, q>2,$ we have
\begin{eqnarray*}
% \nonumber to remove numbering (before each equation)&&
&&\|\tilde{u}(t)-\tilde{v}(t)\|_{2}\\
&&\qquad\leq\mathcal{H}C\int_0^t\bigg\||\tilde{u}(s)-\tilde{v}(s)|({\rm e}^{\lambda\tilde{u}^2(s)}+{\rm e}^{\lambda\tilde{v}^2(s)})\bigg\|_{2}\,ds \\
&&\qquad\leq C\int_0^t \|\tilde{u}(t)-\tilde{v}(t)\|_{2}ds\\
&&\qquad\qquad+C\int_0^t\left(\|\tilde{u}(s)-\tilde{v}(s)\|_{q}\bigg\|({\rm e}^{\lambda\tilde{u}^2(s)}-1)+
({\rm e}^{\lambda\tilde{v}^2(s)}-1)\bigg\|_{p}\right)\,ds\\
&&\qquad\leq C t\sup_{0<s<t}\|\tilde{u}(t)-\tilde{v}(t)\|_{\EP} \\
&&\qquad \qquad+C t\sup_{0<s<t}\|\tilde{u}(t)-\tilde{v}(t)\|_{\EP}\int_0^t\bigg\|\left({\rm e}^{\lambda\tilde{u}^2(s)}-1\right)+\left({\rm e}^{\lambda\tilde{v}^2(s)}-1\right)\bigg\|_{p}\,ds.
\end{eqnarray*}
Moreover, thanks to Proposition \ref{step1}, the term in the integral is uniformly bounded in time. Indeed,
\begin{eqnarray}\label{5.2}
&& \displaystyle\sup_{0<s<T-t_0} \bigg\|\left({\rm e}^{\lambda\tilde{u}^2(s)}-1\right)+\left({\rm e}^{\lambda\tilde{v}^2(s)}-1\right)\bigg\|_{p}
\qquad\qquad\qquad\qquad\qquad\qquad\qquad\\ &&\nonumber\qquad\qquad\qquad\leq\displaystyle\sup_{0<s<T-t_0}\biggl[\bigg(\int_{\R^N}({\rm e}^{\lambda p\tilde{u}^2(s)}-1)ds\bigg)^{1/p}+\bigg(\int_{\R^N}({\rm e}^{\lambda p\tilde{v}^2(s)}-1)ds\bigg)^{1/p}\biggr]\\
&&\qquad\qquad\qquad\leq C(T,t_0,\tilde{u},\tilde{v})<\infty.\nonumber
\end{eqnarray}
Thus, we obtain
\begin{equation}\label{5.3}
\displaystyle\sup_{0<s<t}\|\tilde{u}(t)-\tilde{v}(t)\|_{2} \leq C(T,t_0,\tilde{u},\tilde{v}) t\displaystyle\sup_{0<s<t}\|\tilde{u}(t)-\tilde{v}(t)\|_{\EP}.
\end{equation}
Similarly, we have
\begin{eqnarray*}
&&\!\!\!\!\|\tilde{u}(t)-\tilde{v}(t)\|_{{\infty}}\\
&&\leq C\mathcal{H}\int_0^t(t-s)^{-\frac{N}{4p}}\bigg\||\tilde{u}(s)-\tilde{v}(s)|({\rm e}^{\lambda\tilde{u}^2(s)}+{\rm e}^{\lambda\tilde{v}^2(s)})\bigg\|_{p}\,ds \\
&&\leq C\int_0^t(t-s)^{-\frac{N}{4p}} \|\tilde{u}(t)-\tilde{v}(t)\|_{p}ds\\
&&+C\int_0^t(t-s)^{-\frac{N}{4p}}\left(\|\tilde{u}(s)-\tilde{v}(s)\|_{{\bar{q}}}
\bigg\|({\rm e}^{\lambda\tilde{u}^2(s)}-1)+({\rm e}^{\lambda\tilde{v}^2(s)}-1)
\bigg\|_{{\bar{p}}}\right)\,ds,
\end{eqnarray*}
for some $p>\max\left(2,\,\frac{N}{4}\right)$ and some $\bar{q},\,\bar{p}$ such that $\frac{1}{\bar{q}}+\frac{1}{\bar{p}}=\frac{1}{p}.$ Since $\bar{p}, \bar{q}\geq 2$, one can apply an estimate similar to \eqref{5.2} via Lemma \ref{sarah55} and Proposition \ref{step1}, and obtain that
\begin{equation}\label{5.4}
   \displaystyle\sup_{0<s<t}\|\tilde{u}(t)-\tilde{v}(t)\|_{\infty} \leq C(T,t_0,\tilde{u},\tilde{v}) t^{1-\frac{N}{4p}}\displaystyle\sup_{0<s<t}\|\tilde{u}(t)-\tilde{v}(t)\|_{\EP}.
\end{equation}
Therefore the two inequalities \eqref{5.3} and \eqref{5.4} with the embedding $L^2\cap L^\infty \hookrightarrow \EP$ imply
\begin{equation*}
   \displaystyle\sup_{0<s<t}\|\tilde{u}(t)-\tilde{v}(t)\|_{\EP} \leq C(T,t_0,\tilde{u},\tilde{v}) (t^{1-\frac{N}{4p}}+t)\displaystyle\sup_{0<s<t}\|\tilde{u}(t)-\tilde{v}(t)\|_{\EP},
\end{equation*}
and for $t$ small enough we obtain the desired estimate. This finishes the proof of Theorem \ref{local}.
\end{proof}

%%%%%%%%%%%%%%%%%%%%%%%%%%%%%%%%%%%%%%%%%%%%%%%%%%%%%%%%%%%%%%%%%%%%%%%%%%%%%%%%%%%%%%%%%%%%%%%ù%%%%%%%%%%%%%%%%%%%%%%%%%%%%%%%%%%
\section{Global Existence}

%%%%%%%%%%%%%%%%%%%%%%%%%%%%%%%%%%%%%%%%%%%%%%%%%%%%%%%%%%%%%%%%%%%%%%%%%%%%%%%%%%%%%%%%%%%%%%%ù%%%%%%%%%%%%%%%%%%%%%%%%%%%%%%%%%%
This section is devoted to the proof of Theorem \ref{GE}. The proof uses a fixed point argument on the associated integral equation
 \begin{equation}\label{NN9}
   u(t)= {\rm e}^{-t\Delta^2}u_{0}+\int_{0}^{t}{\rm e}^{-(t-s)\Delta^2}\,f(u(s)) ds,
\end{equation}
 where  $\|u_0\|_{\EP}\leq \varepsilon$, with  small $\varepsilon>0$ to be fixed later. The  nonlinearity $f$ satisfies $f(0)=0$ and
\begin{equation}\label{NNN9}
|f(u)-f(v)|\leq C \left|u-v\right|\bigg(|u|^{m-1}{\rm e}^{\lambda u^2}+|v|^{m-1}{\rm e}^{\lambda v^2}\bigg),
\end{equation}
for some constants $C>0$ and $\lambda>0,$ and $m$ is a real number larger than $1+{8\over N},$ and $m\geq 2.$ From \eqref{NNN9}, we deduce that
\begin{equation}
\label{taylorm}
 |f(u)-f(v)|\leq C|u-v|\sum_{k=0}^{\infty} \frac{\lambda^{k}}{k!} \left(|u|^{2k+m-1}+|v|^{2k+m-1}\right).
\end{equation}
We will perform a fixed point argument on a suitable metric space.  For $M>0$ we  introduce the space
$$Y_M :=\left\{u\in L^\infty(0,\infty, \EP);\;\displaystyle\sup_{t>0}  t^{\sigma}\|u(t)\|_{p}+\|u\|_{L^{\infty}(0,\infty;\EP)}\leq M\right\},$$ where $p>{N(m-1)\over 4}(\geq 2)$ and $$\sigma={1\over m-1}-\frac{N}{4p}={N\over 4}\left({4\over N(m-1)}-{1\over p}\right)>0.$$ Endowed with the metric $d(u,v)=\displaystyle\sup_{t>0} \Big(t^{\sigma}\|u(t)-v(t)\|_{p}\Big)$, $Y_M$ is a complete metric space. This follows by Proposition \ref{fatou}.  Note that in  our case the parameter $\sigma$ depends on $m$. This is not the case in \cite{Ioku}. A similar choice of $\sigma$ was performed in \cite{CW} for the heat equation with a power nonlinearity.

For $u\in Y_M,$ we define $\Phi(u)$ by
\begin{equation}\label{Phi}
    \Phi(u)(t):={\rm e}^{-t\Delta^2}u_{0}+\int_{0}^{t}{\rm e}^{-(t-s)\Delta^2}(f(u(s))) ds.
\end{equation}
By Proposition \ref{LPLQQ}, Lemma \ref{sarah55} and since $p>{N(m-1)\over 4}\geq 2$, we have
\begin{equation}
\label{med3}
t^{\sigma}\|{\rm e}^{-t\Delta^2}u_{0}\|_{p}\leq  {\mathcal H}\,t^{\sigma-(\frac{1}{m-1}-\frac{N}{4p})}\|u_{0}\|_{{N(m-1)/4}}\leq C\,\|u_{0}\|_{\EP}.
\end{equation}

To show that $\Phi$ is a contraction on $Y_M$, we will treat the cases $N\geq 9$, $N=8$ and $1\leq N\leq 7$ separately.

%%%%%%%%%%%%%%%%%%%%%%%%%%%%%%%%%%%%%%%%%%%%%%%%%%%%%%%%%%%%%%%%%%%%%%%%%%%%%%%%%%%%%%%%%%%%%%%%%%%%%%%%%

\subsection{The case $N\geq 9$}

%%%%%%%%%%%%%%%%%%%%%%%%%%%%%%%%%%%%%%%%%%%%%%%%%%%%%%%%%%%%%%%%%%%%%%%%%%%%%%%%%%%%%%%%%%%%%%%%%%%%%%%%

%%%%%%%%%%%%%%%%%%%%%%%%%%%%%%%%%%%%%%%%%%%%%%%%%%%%%%%%%%%%%%%%%%%%%%%%%%%%%%%%%%%%%%%%%%%%%%%%%%%%%%%%

Let $u\in Y_M$. Using Proposition \ref{pre} and Corollary \ref{lemme estimates}, we get for $q>{N\over 4}$,
\begin{eqnarray*}
\|\Phi(u)(t)\|_{\EP}&\leq&\|{\rm e}^{-t\Delta^2}u_{0}\|_{\EP}+\int_{0}^{t}\left\|{\rm e}^{-(t-s)\Delta^2}
(f(u(s))) \right\|_{\EP}\,ds\\
&\leq&\|{\rm e}^{-t\Delta^2}u_{0}\|_{\EP}+\int_{0}^{t}\kappa(t-s)\bigg(\|f(u(s))\|_{L^1\cap L^q}
\bigg)\,ds\\
&\leq&\|{\rm e}^{-t\Delta^2}u_{0}\|_{\EP}+ \|f(u)\|_{L^{\infty}(0,\infty;(L^1\cap L^q))}\int_{0}^{\infty}\kappa(s)\,ds\\
&\leq& \|{\rm e}^{-t\Delta^2}u_{0}\|_{\EP}+C \|f(u)\|_{L^{\infty}(0,\infty;(L^1\cap L^q))}.
\end{eqnarray*}
Hence by Part (i) of Proposition \ref{pre}, we get
$$
\|\Phi(u)\|_{L^\infty(0{,\infty; \EP)}}\leq \mathcal{H} \|u_{0}\|_{\EP}+ C \|f(u)\|_{L^{\infty}(0,\infty;\,L^1\cap L^q)}.
$$
It remains to estimate the nonlinearity $f(u)$ in $L^r$ for $r=1,\,q.$ To this end, let us remark that
\begin{equation}\label{3.4}
|f(u)|
\leq C|u|^m\left({\rm e}^{\lambda u^2}-1\right)+C|u|^m.
\end{equation}
By H\"{o}lder inequality and Lemma \ref{sarah55}, we have for $1\leq r\leq q$ and since $m\geq 2$,
\begin{eqnarray}\label{3.6}
\nonumber
\|f(u)\|_{{r}} &\leq & C\|u\|_{{mr}}^m +C\||u|^m({\rm e}^{\lambda u^2}-1)\|_{{r}}\\ & \leq & C\|u\|_{{mr}}^m +C\|u\|_{{2mr}}^m\|{\rm e}^{\lambda u^2}-1\|_{{2r}} \\ &\leq& \nonumber C\|u\|_{\EP}^m\bigg(\|{\rm e}^{\lambda u^2}-1\|_{{2r}}+1\bigg).
\end{eqnarray}
According to Lemma \ref{med},  and the fact that $u\in Y_M$, we have for $2q\lambda M^2\leq 1$,
\begin{equation}
\label{3.8}
\|f(u)\|_{L^\infty(0,\infty; L^{r})}\leq CM^m.
\end{equation}

 Finally, we obtain
\begin{eqnarray*}
\|\Phi(u)\|_{L^\infty(0,\infty, \EP)}&\leq& \mathcal{H} \|u_{0}\|_{\EP}+C M^m\\
&\leq& \mathcal{H}\varepsilon+CM^m.
\end{eqnarray*}

Let  $u,\,v$ be two elements of $Y_M.$ By using \eqref{taylorm} and Proposition \ref{LPLQQ}, we obtain
\begin{eqnarray*}
 t^{\sigma}\|\Phi(u)(t)-\Phi(v)(t)\|_{p}&\leq&
t^{\sigma}\int_{0}^{t}\left\| {\rm e}^{-(t-s)\Delta^2}
(f(u(s))-f(v(s)))\right\|_{p} ds\\&\leq& {\mathcal H} t^{\sigma}
\int_{0}^{t}(t-s)^{-{N\over 4}(\frac{1}{r}-\frac{1}{p})}\left\|f(u(s))-f(v(s))\right\|_{r}\,ds\\
&&\hspace{-3cm}\leq {\mathcal H}C\sum_{k=0}^{\infty}\frac{\lambda^{k}}{k!} t^{\sigma}\int_{0}^{t}(t-s)^{-{N\over 4}(\frac{1}{r}-\frac{1}{p})}
\|(u-v)(|u|^{2k+m-1}+|v|^{2k+m-1})\|_{r}ds,
\end{eqnarray*}
where $1\leq r\leq p.$ We use the H\"{o}lder inequality with ${1\over r}={1\over p}+{1\over q}$ to obtain
\begin{eqnarray*}
 t^{\sigma}\|\Phi(u)(t)-\Phi(v)(t)\|_{p}&\leq&{\mathcal H} C\sum_{k=0}^{\infty}\frac{\lambda^{k}}{k!} t^{\sigma}\int_{0}^{t}(t-s)^{-{N\over 4}(\frac{1}{r}-\frac{1}{p})}
\|u-v\|_{p}\times\\
&&\||u|^{2k+m-1}+|v|^{2k+m-1}\|_{q}ds,\\
&\leq& {\mathcal H} C\sum_{k=0}^{\infty}\frac{\lambda^{k}}{k!} t^{\sigma}\int_{0}^{t}(t-s)^{-{N\over 4}(\frac{1}{r}-\frac{1}{p})}
\|u-v\|_{p}\times\\ &&\left(\|u\|^{2k+m-1}_{{q(2k+m-1)}}+\|v\|^{2k+m-1}_{{q(2k+m-1)}}\right)ds.
\end{eqnarray*}

Using interpolation inequality where $\frac{1}{q(2k+m-1)}=\frac{\theta}{p}+\frac{1-\theta}{\rho},\; 2\leq \rho<\infty,$  we have
\begin{equation*}
\begin{split}
t^{\sigma}\left\|\int_0^t{\rm e}^{-(t-s)\Delta^2}\,(f(u)-f(v))\,ds\right\|_{p}\,\,\!\!\!\!
&\leq \sum_{k=0}^{\infty}\frac{\lambda^{k}}{k!}t^{\sigma}\int_0^t(t-s)^{-\frac{N}{4}
(\frac{1}{r}-\frac{1}{p})}\|u-v\|_{p}\\
&\hspace{-3cm}\times\biggl(\|u\|^{(2k+m-1)\theta}_{{p}}
  \|u\|^{(2k+m-1)(1-\theta)}_{{\rho}}
  +\|v\|^{(2k+m-1)\theta}_{{p}}\|v\|^{(2k+m-1)(1-\theta)}_{{\rho}}\biggr)\,ds.
\end{split}
\end{equation*}
By Lemma \ref{sarah55}, we obtain
\begin{eqnarray}\label{4.16b}
&&t^{\sigma}\left\|\int_0^t{\rm e}^{-(t-s)\Delta^2}\,(f(u)-f(v))\,ds\right\|_{p}\nonumber\\
&&\qquad\qquad\leq C\sum_{k=0}^{\infty}\frac{\lambda^{k}}{k!}t^{\sigma}
\int_0^t(t-s)^{-\frac{N}{4}(\frac{1}{r}-\frac{1}{p})}\|u-v\|_{p}\Gamma\left(\frac{\rho}{2}+1\right)
^{\frac{(2k+m-1)(1-\theta)}{\rho}}\qquad\qquad\nonumber\\
&&\qquad\times\left(\|u\|^{(2k+m-1)\theta}_{{p}}\|u\|^{(2k+m-1)(1-\theta)}_{\EP}+
\|v\|^{(2k+m-1)\theta}_{{p}}\|v\|^{(2k+m-1)(1-\theta)}_{\EP}\right)\,ds.
\end{eqnarray}
Applying the fact that $u,\; v\in\; Y_M $ in  \eqref{4.16b}, we see that
\begin{eqnarray}\label{4.18sb}
&&t^{\sigma}\left\|\int_0^t{\rm e}^{-(t-s)\Delta^2}\,(f(u)-f(v))\,ds\right\|_{p}\nonumber\\
&&\qquad\leq Cd(u,v)\sum_{k=0}^{\infty}\frac{\lambda^{k}}{k!}\Gamma\left(\frac{\rho}{2}+1\right)^{\frac{(2k+m-1)(1-\theta)}{\rho}} M^{2k+m-1}
\nonumber\\
&&\qquad \qquad\qquad\times t^{\sigma}\bigg(\int_0^t(t-s)^{-\frac{N}{4}(\frac{1}{r}-\frac{1}{p})}s^{-\sigma(1+(2k+m-1)\theta)}\,ds\bigg)\nonumber\\
&&\qquad\leq Cd(u,v)\,\sum_{k=0}^{\infty}\frac{\lambda^{k}}{k!}\Gamma\left(\frac{\rho}{2}+1\right)^{\frac{(2k+m-1)(1-\theta)}{\rho}} M^{2k+m-1}
\qquad\qquad\qquad\qquad\qquad\nonumber\\
&&\qquad\qquad\qquad\times {\mathcal{B}}\left(1-\frac{N}{4}\left(\frac{1}{r}-\frac{1}{p}\right),1-\sigma\big(1+(2k+m-1)\theta\big)\right),
\end{eqnarray}
where the exponents $p,\,q,\,r,\,\theta,\,\rho$  satisfy for all $k\geq 0,$
\begin{eqnarray*}
 p>{N(m-1)\over 4}, \quad 1\leq r\leq p, && \frac{N}{4}\left(\frac{1}{r}-\frac{1}{p}\right)<1,\quad \sigma\big(1+(2k+m-1)\theta\big)<1,\nonumber\\
  0\leq\theta=\theta_k\leq1,\quad \frac{1}{r}=\frac{1}{p}+\frac{1}{q}, &&\quad \frac{1}{(2k+m-1)q}=\frac{\theta}{p}+\frac{1-\theta}{\rho},\quad 2\leq \rho=\rho_k<\infty,\nonumber\\
  &&\hspace{-3cm}1-\frac{N}{4}\left(\frac{1}{r}-\frac{1}{p}\right)-
  (2k+m-1)\theta\sigma=0.
\end{eqnarray*}
For any $p>{N(m-1)\over 4}$, one can choose $0<\theta_k<\frac{1}{2k+m-1}\min\left(m-1,\,{1-\sigma\over \sigma}\right).$ It is obvious that for such $\theta_k$, there exist $r,\,q,\,\rho$ such that the above conditions are satisfied.
For these parameters, using \eqref{gamma4} and \eqref{gamma1}, we obtain that
\begin{eqnarray}\label{4.20b}
\nonumber
 &&\hspace{-3cm}{\mathcal{B}}\left(1-\frac{N}{4}\left(\frac{1}{r}-\frac{1}{p}\right),1-\sigma\big(1+(2k+m-1)\theta\big)\right)=\\ && {\Gamma\left({m-2\over m-1}+{N\over 4p}\right)\over \Gamma\left(1-\frac{N}{4}(\frac{1}{r}-\frac{1}{p})\right)\Gamma\Bigl(1-\sigma[1+(2k+m-1)\theta]\Bigr)}\leq C .
\end{eqnarray}
Moreover, note that $\theta_k\to 0,\; \rho_k\to \infty$ and
$$\frac{(2k+m-1)(1-\theta_k)}{2\rho_k}(1+\rho_k)\leq k,\; \forall\; k\geq 1.$$
This together with \eqref{gamma3} and \eqref{gamma2} gives
\begin{equation}\label{4.21b}
   \Gamma\left(\frac{\rho_k}{2}+1\right)^{\frac{(2k+m-1)(1-\theta_k)}{\rho_k}} \leq C^k k !.
\end{equation}
 Combining \eqref{4.18sb}, \eqref{4.20b} and \eqref{4.21b} we have
$$
t^{\sigma}\left\|\int_0^t{\rm e}^{-(t-s)\Delta^2}\,(f(u)-f(v))\,ds\right\|_{p}\leq C d(u,v) \sum_{k=0}^{\infty} \,{(C\lambda)^k} M^{2k+m-1}.$$
Then, we get for $M$ small,
\begin{equation*}\label{4.18sss}
t^{\sigma}\left\|\int_0^t{\rm e}^{-(t-s)\Delta^2}\,(f(u)-f(v))\,ds\right\|_{p}\leq C M^{m-1} d(u,v).
\end{equation*}

The above estimates show that $\Phi : Y_M \to Y_M $ is a contraction mapping.  By Banach's fixed point theorem, we thus obtain the existence of a unique
 $u$ in  $Y_M$ with $\Phi(u)=u.$ By \eqref{Phi}, $u$ solves the integral equation \eqref{NN9} with $f$ satisfying \eqref{NNN9}. The estimate \eqref{Linfinibihar} follows from $u\in Y_M.$ This terminates the proof of the existence of a  global solution to \eqref{NN9} for $N\geq 9$.

%%%%%%%%%%%%%%%%%%%%%%%%%%%%%%%%%%%%%%%%%%%%%%%%%%%%%%%%%%%%%%%%%%%%%%%%%%%%%%%%%%%%%%%%%%%%%%%ù%%%%%%%%%%%%%%%%%%%%%%%%%%%%%%%%%%
\subsection{The case $N=8$}
%%%%%%%%%%%%%%%%%%%%%%%%%%%%%%%%%%%%%%%%%%%%%%%%%%%%%%%%%%%%%%%%%%%%%%%%%%%%%%%%%%%%%%%%%%%%%%%ù%%%%%%%%%%%%%%%%%%%%%%%%%%%%%%%%%%
Let  $u,\,v$ be two elements of $Y_M.$ By using \eqref{taylorm} and Proposition \ref{LPLQQ}, we obtain
\begin{eqnarray*}
 t^{\sigma}\|\Phi(u)(t)-\Phi(v)(t)\|_{p}&\leq&
t^{\sigma}\int_{0}^{t}\left\| {\rm e}^{-(t-s)\Delta^2}
(f(u(s))-f(v(s)))\right\|_{p} ds\\&\leq& {\mathcal H} t^{\sigma}
\int_{0}^{t}(t-s)^{-2(\frac{1}{r}-\frac{1}{p})}\left\|f(u(s))-f(v(s))\right\|_{r}\,ds\\
&&\hspace{-3cm}\leq {\mathcal H}C\sum_{k=0}^{\infty}\frac{\lambda^{k}}{k!} t^{\sigma}\int_{0}^{t}(t-s)^{-2(\frac{1}{r}-\frac{1}{p})}
\|(u-v)(|u|^{2k+m-1}+|v|^{2k+m-1})\|_{r}ds,
\end{eqnarray*}
where $1\leq r\leq p.$ We use the H\"{o}lder inequality with ${1\over r}={1\over p}+{1\over q}$ to obtain
\begin{eqnarray*}
 t^{\sigma}\|\Phi(u)(t)-\Phi(v)(t)\|_{p}&\leq&{\mathcal H} C\sum_{k=0}^{\infty}\frac{\lambda^{k}}{k!} t^{\sigma}\int_{0}^{t}(t-s)^{-2(\frac{1}{r}-\frac{1}{p})}
\|u-v\|_{p}\times\\
&&\||u|^{2k+m-1}+|v|^{2k+m-1}\|_{q}ds,\\
&\leq& {\mathcal H} C\sum_{k=0}^{\infty}\frac{\lambda^{k}}{k!} t^{\sigma}\int_{0}^{t}(t-s)^{-2(\frac{1}{r}-\frac{1}{p})}
\|u-v\|_{p}\times\\ &&\left(\|u\|^{2k+m-1}_{{q(2k+m-1)}}+\|v\|^{2k+m-1}_{{q(2k+m-1)}}\right)ds.
\end{eqnarray*}
Similar calculations as in the previous subsection give
\begin{equation}\label{4.12}
t^{\sigma}\|\Phi(u)-\Phi(v)\|_{p}\leq  CM^{m-1}\sup_{s>0}s^{\sigma}\|u-v\|_{p}=CM^{m-1} d(u,v).
\end{equation}
We now estimate $\|\Phi(u)\|_{L^{\infty}(0,\infty;\exp L^2(\R^8))}.$ We have, by Proposition \ref{pre},
\begin{equation*}
\begin{split}
\|\Phi(u)\|_{L^{\infty}(0,\infty;\exp L^2(\R^8))}&\leq \|{\rm e}^{-t\Delta^2}u_{0}\|_{L^{\infty}(0,\infty;\exp L^2(\R^8))}\\
&+\left\|\int_{0}^{t}{\rm e}^{-(t-s)\Delta^2} (f(u(s))) ds\right\|_{L^{\infty}(0,\infty;\exp L^2(\R^8))}\\ &\leq {\mathcal H}\|u_{0}\|_{\exp L^2(\R^8))}\\
&+\left\|\int_{0}^{t}{\rm e}^{-(t-s)\Delta^2} (f(u(s))) ds\right\|_{L^{\infty}(0,\infty;\exp L^2(\R^8))}.
\end{split}
\end{equation*}
We first estimate $\left\|\displaystyle\int_0^t{\rm e}^{-(t-s)\Delta^2}(f(u(s)))\,ds\right\|_{L^{\infty}(0,\infty;\exp L^2(\R^8))}.$ Using \eqref{slim}, it suffice to estimate  $\left\|\displaystyle\int_0^t{\rm e}^{-(t-s)\Delta^2}(f(u(s)))\,ds\right\|_{L^{\infty}(0,\infty;L^{\phi}(\R^8))}$ and $\left\|\displaystyle\int_0^t{\rm e}^{-(t-s)\Delta^2}(f(u(s)))\,ds\right\|_{L^{\infty}(0,\infty;L^{2}(\R^8))}.$

By the same argument as in the case $N\geqslant9,$  using Corollary \ref{forn=8} instead of Corollary \ref{lemme estimates} we obtain
\begin{equation}
\label{p}
\begin{split}
\left\|\int_{0}^{t}{\rm e}^{-(t-s)\Delta^2} (f(u(s)))\,ds\right\|_{L^{\infty}(0,\infty;L^{\phi}(\R^8))} &\leq C M^m.
\end{split}
\end{equation}
Second  we estimate $\left\|\displaystyle\int_0^t{\rm e}^{-(t-s)\Delta^2}\,(f(u(s)))\, ds\right\|_{L^{\infty}(0,\infty;L^2(\R^8))}.$
By using \eqref{taylorm} and Proposition \ref{LPLQQ}, we obtain
\begin{equation*}
\begin{split}
&\left\|\int_{0}^{t}{\rm e}^{-(t-s)\Delta^2}(f(u(s))) ds\right\|_{2}\leq {\mathcal H}C\sum_{k=0}^{\infty}\frac{\lambda^k}{k!}\int_{0}^{t}(t-s)^{-2(\frac{1}{r}-\frac{1}{2})}
\|u\|^{2k+m}_{{(2k+m)r}}\,ds.
\end{split}
\end{equation*}
Using similar computations as above, we obtain
\begin{equation}
\label{4.30}
 \left \|\int_0^t{\rm e}^{-(t-s)\Delta^2}\,(f(u(s))) ds\right\|_{L^{\infty}(0,\infty; L^2(\R^8))}\leq CM^m.
\end{equation}

From \eqref{p} and \eqref{4.30}, we obtain
\begin{equation*}
 \left \|\int_0^t{\rm e}^{-(t-s)\Delta^2}\,(f(u(s))) ds\right\|_{L^{\infty}(0,\infty;\exp L^2(\R^8))}\leq C M^m.
\end{equation*}
It follows that
\begin{equation*}
\begin{split}
\|\Phi(u)\|_{L^{\infty}(0,\infty,\exp L^2(\R^8))}&\leq {\mathcal H} \|u_{0}\|_{\exp L^2(\R^8))}+CM^m.
\end{split}
\end{equation*}

Now, by  \eqref{med3} the inequality \eqref{4.12} gives
\begin{equation*}
t^{\sigma}\|\Phi(u)\|_{p}\leq {\mathcal H}\|u_{0}\|_{\exp L^2(\R^8)}+ CM^m.
\end{equation*}
If we choose  $M$ and $\varepsilon$ small then  $\Phi$ maps $Y_M$ into itself.
Moreover, thanks to the inequality \eqref{4.12}  we obtain that  $\Phi$ is a contraction  map on $Y_M$. The conclusion follows by the Banach fixed point theorem.

\begin{remark}
{\rm We do not need the restriction $p<4$ unlike in \cite{Ioku}. Indeed, such a restriction comes from a particular choice of $\theta$ which can be avoid.}
\end{remark}

%%%%%%%%%%%%%%%%%%%%%%%%%%%%%%%%%%%%%%%%%%%%%%%%%%%%%%%%%%%%%%%%%%%%%%%%%%%%%%%%%%%%%%%%%%%%%%%ù%%%%%%%%%%%%%%%%%%%%%%%%%%%%%%%%%%

\subsection{The case of $N\leq 7$}

%%%%%%%%%%%%%%%%%%%%%%%%%%%%%%%%%%%%%%%%%%%%%%%%%%%%%%%%%%%%%%%%%%%%%%%%%%%%%%%%%%%%%%%%%%%%%%%ù%%%%%%%%%%%%%%%%%%%%%%%%%%%%%%%%%%
According to \eqref{med3} and the previous calculations, it remains to establish the following two inequalities
\begin{equation}
\label{estim1}
\left\|\int_{0}^{t}{\rm e}^{-(t-s)\Delta^2}(f(u(s))) ds\right\|_{L^{\infty}(0,\infty;\,\EP)}\leq C_1(M),
\end{equation}
and
 \begin{equation}
\label{estim2}
\sup_{t>0}t^{\sigma}\left\|\int_0^t{\rm e}^{-(t-s)\Delta^2}\,(f(u)-f(v))\,ds\right\|_{p}\leq C_2(M)\sup_{s>0}s^{\sigma}\|u(s)-v(s)\|_{p},
\end{equation}
 where $u,\;v\in Y_M$ and with $C_1$ and $C_2$ are small when $M$ is small.\\

\noindent{\bf Estimate \eqref{estim1}}.  We have
\begin{equation}\label{log}
    \left(\log\left((t-s)^{-N/4}+1\right)\right)^{-\frac{1}{2}}\leq\sqrt{2}(t-s)^{\frac{N}{8}}\quad \mbox{for}\quad 0\leq s\leq t-a^{-\frac{4}{N}},
\end{equation}
where $a>1$ is the number satisfying $a=2\log(a+1).$  Therefore we have, for ${N\over 4}<q\leq 2$ and $0<t\leq a^{-4/N}$,
\begin{eqnarray*}
\left\|\int_0^t{\rm e}^{-(t-s)\Delta^2}\,f(u(s))\,ds\right\|_{\EP}&\leq&{\mathcal H}\int_0^t(t-s)^{-\frac{N}{4q}}(\log((t-s)^{-\frac{N}{4}}+1))^{-\frac{1}{2}}\|f(u(s))\|_{q}\,ds\\
&\leq&C\sup_{t>0}\|f(u(t))\|_{q},
\end{eqnarray*}
where here
$$
C={\mathcal H}\int_0^{a^{-4/N}}\tau^{-\frac{N}{4q}}\,\left(\log\left(\tau^{-\frac{N}{4}}+1\right)\right)^{-\frac{1}{2}}\,d\tau<\infty.
$$
For $t\geq a^{-4/N}$, we write
\begin{eqnarray}\label{(4.31)}
&&\qquad\qquad\left\|\int_0^t{\rm e}^{-(t-s)\Delta^2}\,f(u(s))\,ds\right\|_{\EP}\nonumber\\
&&\qquad\qquad\leq {\mathcal H} \int_0^{t-a^{-\frac{4}{N}}}(t-s)^{-\frac{N}{4q}}(\log((t-s)^{-\frac{N}{4}}+1))^{-\frac{1}{2}}\|f(u(s))\|_{q}\,ds
\qquad\qquad\qquad\\ \nonumber
&&\qquad\qquad+{\mathcal H}
\int_{t-a^{-\frac{4}{N}}}^t(t-s)^{-\frac{N}{4q}}(\log((t-s)^{-\frac{N}{4}}+1))^{-\frac{1}{2}}\|f(u(s))\|_{q}\,ds\\ \nonumber
&&\qquad\qquad\leq{\mathcal H}\sqrt{2}\int_0^t(t-s)^{-\frac{N}{4q}+{\frac{N}{8}}}\|f(u(s))\|_{q}\,ds+C\sup_{t>0}\|f(u(t))\|_{q}\nonumber= \textbf{I}+\textbf{J}.
\end{eqnarray}
We first estimate $\textbf{I}$. By \eqref{taylorm} and the fact that $f(0)=0$,  we have
\begin{equation*}
\textbf{I}\leq {\mathcal H}C\sqrt{2}\sum_{k=0}^{\infty}\frac{\lambda^k}{k!}\int_0^t(t-s)^{-\frac{N}{4q}+\frac{N}{8}}\|u\|_{{(2k+m)q}}^{2k+m}\,\,ds.
\end{equation*}
Using interpolation inequality and Lemma \ref{sarah55}, we get
\begin{eqnarray*}
\textbf{I}&\leq&C\sum_{k=0}^{\infty}\frac{\lambda^k}{k!}\int_0^t(t-s)^{-\frac{N}{4q}+\frac{N}{8}}\|u\|_{p}^{(2k+m)
\theta}\|u\|_{\rho}^{(2k+m)(1-\theta)}\,ds\nonumber\\&\leq&
C\sum_{k=0}^{\infty}\frac{\lambda^k}{k!}\int_0^t(t-s)^{-\frac{N}{4q}+\frac{N}{8}}\|u\|_{p}^{(2k+m)
\theta}\Gamma\left(\frac{\rho}{2}+1\right)
^{\frac{(2k+m)(1-\theta)}{\rho}}\|u\|_{\EP}^{(2k+m)(1-\theta)}\,ds\nonumber\\
&\leq&C\sum_{k=0}^{\infty}\frac{\lambda^k}{k!}\Gamma\left(\frac{\rho}{2}+1\right)
^{\frac{(2k+m)(1-\theta)}{\rho}}M^{2k+m}\int_0^t(t-s)^{-\frac{N}{4q}+\frac{N}{8}}s^{-(2k+m)\theta\sigma}\,ds
\nonumber\\
&\leq&C\sum_{k=0}^{\infty}\frac{\lambda^k}{k!}\Gamma\left(\frac{\rho}{2}+1\right)
^{\frac{(2k+m)(1-\theta)}{\rho}}M^{2k+m}t^{1-\frac{N}{4q}+\frac{N}{8}-(2k+m)\theta\sigma}\\
&&\times {\mathcal{B}}\left(1-\frac{N}{4q}+\frac{N}{8},1-(2k+m)\theta\sigma\right),
\end{eqnarray*}
where ${\mathcal{B}}$ is the beta function and $\rho,\,\theta,\,q$  satisfy, for all $k,$
\begin{eqnarray*}
% \nonumber to remove numbering (before each equation)
p>{2N(m-1)\over 8-N},\quad 0\leq \theta=\theta_k\leq1,\quad  \frac{N}{4} <q<2,\quad \frac{N}{4q}-{\frac{N}{8}}<1,\quad && (2k+m)\theta\sigma<1, \\
 \quad 1-\frac{N}{4q}+\frac{N}{8}-(2k+m)\theta\sigma=0, \quad \frac{1}{(2k+m)q}=\frac{\theta}{p}+\frac{1-\theta}{\rho},\quad &&\;  2\leq \rho=\rho_k<\infty.
\end{eqnarray*}
For any $p>{2N(m-1)\over 8-N}$, one can choose $\frac{N}{8(2k+m)\sigma}<\theta_k<\frac{m-1}{2k+m}.$ It is obvious that for such $\theta_k$, there exist $q,\,\rho$ such that the above conditions are satisfied.

Arguing as above, we obtain
\begin{equation}\label{(4.34)}
  {\mathcal{B}}\left(1-\frac{N}{4q}+\frac{N}{8},1-(2k+m)\theta\sigma\right)={1\over \Gamma\left(1-\frac{N}{4q}+\frac{N}{8}\right) \Gamma\Big(1-(2k+m)\theta\sigma\Big)}\leq C,
\end{equation}
and
\begin{equation}\label{(4.35)}
  \Gamma\left(\frac{\rho_k}{2}+1\right)^{\frac{(2k+m)(1-\theta_k)}{\rho_k}} \leq C^k k!,
\end{equation}

Combining  \eqref{(4.34)} and \eqref{(4.35)}, we have, for small M,
\begin{equation}\label{(4.36)}
  \textbf{I}\leq C\,M^m.
\end{equation}

To estimate the term $\textbf{J}$, we write
\begin{equation*}
    \|f(u)\|_{q}\leq C\||u|^m {\rm e}^{\lambda u^2}\|_{q}\leq C\||u|^m({\rm e}^{\lambda u^2}+1-1)\|_{q}.
\end{equation*}
By H\"older inequality, we obtain
\begin{eqnarray*}
   \|f(u)\|_{q}&\leq& C\|u\|^m_{{2mq}}\|{\rm e}^{\lambda u^2}-1\|_{{2q}}+\|u(t)\|^m_{{mq}}\\
&\leq&C\|u\|^m_{\EP}\left(\|{\rm e}^{\lambda u^2}-1\|_{{2q}}+1\right),
\end{eqnarray*}
where we have used $mq>Nm/4>N(m-1)/4\geq 2.$ Now, by Lemma \ref{med}, for $2q\lambda M^2\leq 1,$ we have
\begin{equation*}
\|{\rm e}^{\lambda u^2}-1)\|_{{2q}}\leq (2q\lambda)^{\frac{1}{2q}} M^{\frac{1}{q}}\leq 1.
\end{equation*}
Then we conclude that, for $u\in Y_M,$
\begin{equation*}
   \textbf{J}=\sup_{t>0}\|f(u(t))\|_{q}\leq C M^m.
\end{equation*}
Finally, we obtain
\begin{eqnarray}\label{(4.991)}
\left\|\int_0^t{\rm e}^{-(t-s)\Delta^2}\,(f(u))\,ds\right\|_{\EP
}&\leq&CM^m.
\end{eqnarray}

{\bf Estimate \eqref{estim2}}. By \eqref{taylorm} and Proposition \ref{LPLQQ}, we have
\begin{equation*}
\begin{split}
% \nonumber to remove numbering (before each equation)
&t^{\sigma}\left\|\int_0^t{\rm e}^{-(t-s)\Delta^2}\,(f(u)-f(v))\,ds\right\|_{p}\\
&\qquad\qquad\qquad\leq C\, \sum_{k=0}^{\infty}\frac{\lambda^k}{k!}t^{\sigma}\int_0^t
(t-s)^{-\frac{N}{4}(\frac{1}{r}-\frac{1}{p})}\|(u-v)(|u|^{2k+m-1}+|v|^{2k+m-1})\|_{r}\,ds.\qquad\qquad
\end{split}
\end{equation*}
Applying the H\"older's inequality, we obtain
\begin{eqnarray*}
&& t^{\sigma}\left\|\int_0^t{\rm e}^{-(t-s)\Delta^2}\,(f(u)-f(v))\,ds\right\|_{p}\\
&&\quad\leq C\, \sum_{k=0}^{\infty}\frac{\lambda^k}{k!}t^{\sigma}\int_0^t(t-s)^{-\frac{N}{4}(\frac{1}{r}-\frac{1}{p})}\|u-v\|_{p} \|(|u|^{2k+m-1}+|v|^{2k+m-1})\|_{q}\,ds\\
&&\quad\leq C\, \sum_{k=0}^{\infty}\frac{\lambda^k}{k!}t^{\sigma}\int_0^t(t-s)^{-\frac{N}{4}
(\frac{1}{r}-\frac{1}{p})}\|u-v\|_{p}\left(\|u\|^{2k+m-1}_{{q(2k+m-1)}}+\|v\|^{2k+m-1}_{{q(2k+m-1)}}\right)\,ds.
\end{eqnarray*}
Using interpolation inequality where $\frac{1}{q(2k+m-1)}=\frac{\theta}{p}+\frac{1-\theta}{\rho},\; 2\leq \rho<\infty,$  we have
\begin{equation*}
\begin{split}
t^{\sigma}\left\|\int_0^t{\rm e}^{-(t-s)\Delta^2}\,(f(u)-f(v))\,ds\right\|_{p}\,\,\!\!\!\!
&\leq \sum_{k=0}^{\infty}\frac{\lambda^k}{k!}t^{\sigma}\int_0^t(t-s)^{-\frac{N}{4}
(\frac{1}{r}-\frac{1}{p})}\|u-v\|_{p}\\
&\hspace{-3cm}\times(\|u\|^{(2k+m-1)\theta}_{{p}}
  \|u\|^{(2k+m-1)(1-\theta)}_{{\rho}}
  +\|v\|^{(2k+m-1)\theta}_{{p}}\|v\|^{(2k+m-1)(1-\theta)}_{{\rho}})\,ds.
\end{split}
\end{equation*}
By Lemma \ref{sarah55}, we obtain
\begin{eqnarray}\label{4.16}
&&t^{\sigma}\left\|\int_0^t{\rm e}^{-(t-s)\Delta^2}\,(f(u)-f(v))\,ds\right\|_{p}\nonumber\\
&&\qquad\qquad\leq C\sum_{k=0}^{\infty}\frac{\lambda^k}{k!}t^{\sigma}
\int_0^t(t-s)^{-\frac{N}{4}(\frac{1}{r}-\frac{1}{p})}\|u-v\|_{p}\Gamma\left(\frac{\rho}{2}+1\right)
^{\frac{(2k+m-1)(1-\theta)}{\rho}}\qquad\qquad\nonumber\\
&&\quad\times\left(\|u\|^{(2k+m-1)\theta}_{{p}}\|u\|^{(2k+m-1)(1-\theta)}_{\EP}+
\|v\|^{(2k+m-1)\theta}_{{p}}\|v\|^{(2k+m-1)(1-\theta)}_{\EP}\right)\,ds.
\end{eqnarray}
Applying the fact that $u,\; v\in\; Y_M $ in  \eqref{4.16}, we see that
\begin{eqnarray}\label{4.18s}
&&t^{\sigma}\left\|\int_0^t{\rm e}^{-(t-s)\Delta^2}\,(f(u)-f(v))\,ds\right\|_{p}\nonumber\\
&&\qquad\leq Cd(u,v)\sum_{k=0}^{\infty}\frac{\lambda^k}{k!}\Gamma\left(\frac{\rho}{2}+1\right)^{\frac{(2k+m-1)(1-\theta)}{\rho}} M^{2k+m-1}
\nonumber\\
&&\qquad \qquad\qquad\times t^{\sigma}\bigg(\int_0^t(t-s)^{-\frac{N}{4}(\frac{1}{r}-\frac{1}{p})}s^{-\sigma(1+(2k+m-1)\theta)}\,ds\bigg)\nonumber\\
&&\qquad\leq Cd(u,v)\,\sum_{k=0}^{\infty}\frac{\lambda^k}{k!}\Gamma\left(\frac{\rho}{2}+1\right)^{\frac{(2k+m-1)(1-\theta)}{\rho}} M^{2k+m-1}
\qquad\qquad\qquad\qquad\qquad\nonumber\\
&&\qquad\qquad\qquad\times {\mathcal{B}}\left(1-\frac{N}{4}\left(\frac{1}{r}-\frac{1}{p}\right),1-\sigma(1+(2k+m-1)\theta)\right),
\end{eqnarray}
where the exponents $p,\,q,\,r,\,\theta,\,\rho$  satisfy for all $k,$
\begin{eqnarray*}
% \nonumber to remove numbering (before each equation)
 p>m, \quad 1\leq r\leq p, && \frac{N}{4}\left(\frac{1}{r}-\frac{1}{p}\right)<1,\quad \sigma\big(1+(2k+m-1)\theta\big)<1,\nonumber\\
  0\leq\theta=\theta_k\leq1,\quad \frac{1}{r}=\frac{1}{p}+\frac{1}{q}, &&\quad \frac{1}{(2k+m-1)q}=\frac{\theta}{p}+\frac{1-\theta}{\rho},\quad 2\leq \rho<\infty,\nonumber\\
  &&\hspace{-3cm}1-\frac{N}{4}\left(\frac{1}{r}-\frac{1}{p}\right)-
  (2k+m-1)\theta\sigma=0.
\end{eqnarray*}
For any $p>m$, one can choose
$$\frac{{N\over 4p}+1-{N\over 4}}{(2k+m-1)\sigma}<\theta_k<\frac{1}{2k+m-1}\min\left(m-1,\,{1-\sigma\over \sigma}\right).$$ It is obvious that for such $\theta_k$, there exist $r,\,q,\,\rho$ such that the above conditions are satisfied.

Using \eqref{gamma4}, \eqref{gamma1} and the fact that $1-\sigma>0$ (even for $p=\infty$),  we obtain that
\begin{equation}\label{4.20}
 {\mathcal{B}}\left(1-\frac{N}{4}\left(\frac{1}{r}-\frac{1}{p}\right),1-\sigma\big(1+(2k+m-1)\theta\big)\right)\leq C.
\end{equation}
As above, we also have
\begin{equation}\label{4.21}
   \Gamma\left(\frac{\rho_k}{2}+1\right)^{\frac{(2k+m-1)(1-\theta_k)}{\rho_k}} \leq C^k k !.
\end{equation}
 Combining \eqref{4.18s}, \eqref{4.20} and \eqref{4.21} we have
\begin{eqnarray*}
&&t^{\sigma}\left\|\int_0^t{\rm e}^{-(t-s)\Delta^2}\,(f(u)-f(v))\,ds\right\|_{p}\nonumber\\
&&\qquad\qquad\leq C\,d(u,v)\,\sum_{k=0}^{\infty}\frac{\lambda^k}{k!}\Gamma\left(\frac{\rho}{2}+1\right)^{\frac{(2k+m-1)(1-\theta)}{\rho}} M^{2k+m-1}\nonumber\\
&&\qquad\qquad\qquad\times {\mathcal B}\left(1-\frac{N}{4}\left(\frac{1}{r}-\frac{1}{p}\right),1-\sigma(1+(2k+m-1)\theta)\right)\nonumber\\
&&\qquad\qquad\leq C d(u,v) \sum_{k=0}^{\infty}{(C\lambda)^k} M^{2k+m-1}.\nonumber
\end{eqnarray*}
Then, we get
\begin{equation*}\label{4.18ssss}
t^{\sigma}\left\|\int_0^t{\rm e}^{-(t-s)\Delta^2}\,(f(u)-f(v))\,ds\right\|_{p}\leq C M^{m-1} d(u,v).
\end{equation*}
This together with \eqref{(4.991)} and \eqref{med3} concludes the proof of global existence for dimensions $N\leq 7$.

%%%%%%%%%%%%%%%%%%%%%%%%%%%%%%%%%%%%%%%%%%%%%%%%%%%%%%%%%%%%%%%%%%%%%%%%%%%%%%%%%%%%%%%%%%%%%%%%%%%%%%%%%

\subsection{Continuity at zero}

We will now prove the statement \eqref{1.8}. For ${ q>\frac{N}{4}}$, we have
\begin{eqnarray}\label{V.549}
 && \|u(t)-{\rm e}^{-t\Delta^2}u_0\|_{\EP} \qquad\qquad\qquad\qquad\nonumber\\
 &&\qquad\leq \int_0^t\|{\rm e}^{-(t-s)\Delta^2}
  f(u(s))\|_{\EP}\,ds\nonumber\\
  &&\qquad\leq {{1\over \sqrt{\log 2}}}\int_0^t\|{\rm e}^{-(t-s)\Delta^2}f(u(s))\|_{2}ds+{{1\over \sqrt{\log 2}}}\int_0^t\|{\rm e}^{-(t-s)\Delta^2}f(u(s))\|_{{\infty}}\,ds\nonumber\\
  &&\qquad\leq {{{\mathcal{H}}\over \sqrt{\log 2}}}\int_0^t\|f(u(s))\|_{2}ds+{{{\mathcal{H}}\over \sqrt{\log 2}}}\int_0^t(t-s)^{-\frac{N}{4q}} \|f(u(s))\|_{q}\,ds.
\end{eqnarray}
Now, let us  estimate $\|f(u(t))\|_{r}$ for $r=2,\,q.$  We have
\begin{equation*}
    |f(u)|\leq C|u|^m{\rm e}^{\lambda u^2}.
\end{equation*}
Therefore, we obtain
\begin{equation*}
 \|f(u)\|_{r}\leq C \||u|^m({\rm e}^{\lambda u^2}-1+1)\|_{r}.
\end{equation*}
By using H\"{o}lder inequality and Lemma \ref{sarah55}, we obtain
\begin{eqnarray*}
  \|f(u)\|_{r}&\leq& C\|u\|^{m}_{{2mr}}\|{\rm e}^{\lambda u^2}-1\|_{{2r}}+\|u\|^{m}_{{mr}}\\
&\leq&C\|u\|^{m}_{\EP}\left(\|{\rm e}^{\lambda u^2}-1\|_{{2r}}+1\right).
\end{eqnarray*}
Using Lemma \ref{med} we conclude that
\begin{equation}\label{V.559}
\|f(u)\|_{r}\leq C \|u\|^{m}_{\EP} \left((2r\lambda)^{\frac{1}{2r}}M^{\frac{1}{r}}+1\right)\leq C \|u\|^{m}_{\EP}.
\end{equation}
Substituting \eqref{V.559} in \eqref{V.549}, we have
\begin{eqnarray*}
\|u(t)-{\rm e}^{-t\Delta^2}u_0\|_{\EP}&\leq& C\int_0^t \bigg[\|u\|^{m}_{\EP}+(t-s)^{-\frac{N}{4q}}
\|u\|^{m}_{\EP}\bigg]ds\\
&\leq& Ct \|u\|^{m}_{L^{\infty}(0,\infty;\,\EP)}+Ct^{1-\frac{N}{4q}}\|u\|^{m}_{L^{\infty}(0,\infty;\,\EP)} \\
&\leq &C_1t+C_2t^{1-\frac{N}{4q}},
\end{eqnarray*}
where $C_1,\,C_2$ are  finite positive constants. This gives
\begin{equation*}
    \lim_{t\longrightarrow0}\|u(t)-{\rm e}^{-t\Delta^2}u_0\|_{\EP}=0,
\end{equation*}
and proves statement \eqref{1.8}.

Finally the fact that $u(t)\to u_0$ as $t\to 0$ in the weak$^*$ topology can be done as in \cite{Ioku}. So we omit the proof here. This completes the proof of Theorem \ref{GE}.
\subsection{Proof of Theorem \ref{linfinheat}}  The proof is via  a fixed point argument on the associated integral equation
 \begin{equation}\label{NN9H}
   u(t)= {\rm e}^{t\Delta}u_{0}+\int_{0}^{t}{\rm e}^{(t-s)\Delta}f(u(s))\,ds,
\end{equation}
 where  $\|u_0\|_{\EP}\leq \varepsilon$, with  small $\varepsilon>0$.   For $M>0$ we  use the space
$$Y_M :=\left\{u\in L^\infty(0,\infty, \EP);\;\displaystyle\sup_{t>0}  t^{\sigma}\|u(t)\|_{p}+\|u\|_{L^{\infty}(0,\infty;\EP)}\leq M\right\},$$ endowed with  the metric $$d(u,v)=\displaystyle\sup_{t>0}  t^{\sigma}\|u(t)-v(t)\|_{p}.$$  Here $p>{N(\ell-1)\over 2}$ and $$\sigma={1\over \ell-1}-\frac{N}{2p}>0.$$ The rest of the proof is carried out as in the previous subsections, so we omit the details.
\section{Extensions to polyharmonic heat equations}
Our results can be extended to the nonlinear polyharmonic heat equations

\begin{equation}\label{1.1poly}
\left\{\begin{array}{cc}
\partial_{t} u +(-\Delta)^d u=f(u)  ,   \\
u(0,x)=u_{0}(x),
\end{array}
\right.
\end{equation}
where $u(t,x)$ is a real valued function  $t>0,\,\,x\in\R^N,$ $(-\Delta)^d,\; d\geq 2$, is the polyharmonic operator, and $f : \R\to\R$ with $f(0)=0$ and having an exponential growth at infinity.

In fact, the proofs can be obtained by slightly modifying the arguments used for $d=2.$ Precisely, using the majorizing kernel established in \cite{GP} and since the fundamental solution $E_d$ of
$$
\partial_{t} u +(-\Delta)^d u=0,\; t>0,\; x\in \R^N,
$$
satisfies
$$
E_d(t,x)={t^{-\frac{N}{2d}}}\,E_d\left(1,\,t^{-\frac{1}{2d}} x \right),
$$
we have the following $L^p-L^q$ estimate.
\begin{proposition}\label{LPLQQd} There exists a positive constant $ \mathcal{H}_d$ such that for all $1\leq p\leq q\leq\infty$, we have
\begin{equation*}
\|{\rm e}^{-t(-\Delta)^d}\varphi\|_{q}\leqslant \mathcal{H}_d  t^{-\frac{N}{2d}(\frac{1}{p}-\frac{1}{q})}\|\varphi\|_{p},\qquad\,\, \forall\; t>0,\,\, \forall\; \varphi\in L^p.
\end{equation*}
\end{proposition}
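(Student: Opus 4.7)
The plan is to mimic the argument used for Proposition \ref{LPLQQ} in the biharmonic case, replacing the exponent $1/4$ arising from the biharmonic scaling by $1/(2d)$ and invoking the polyharmonic majorizing kernel from \cite{GP}.

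First I would write the solution as a convolution $e^{-t(-\Delta)^d}\varphi = E_d(t,\cdot)\star\varphi$ and apply Young's convolution inequality: for $1\leq p\leq q\leq\infty$ and $r$ determined by $\frac{1}{q}+1=\frac{1}{p}+\frac{1}{r}$, we obtain
\[
\|e^{-t(-\Delta)^d}\varphi\|_q\leq \|E_d(t,\cdot)\|_r\,\|\varphi\|_p.
\]
Using the scaling identity $E_d(t,x)=t^{-N/(2d)}E_d(1,t^{-1/(2d)}x)$ gives
\[
\|E_d(t,\cdot)\|_r = t^{-\frac{N}{2d}(1-\frac{1}{r})}\|E_d(1,\cdot)\|_r = t^{-\frac{N}{2d}(\frac{1}{p}-\frac{1}{q})}\|E_d(1,\cdot)\|_r,
\]
so the only remaining task is to bound $\|E_d(1,\cdot)\|_r$ by a constant independent of $r$ (equivalently, independent of $p,q$).

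For this I would invoke the polyharmonic analogue of Proposition \ref{abir2} proved in \cite{GP}: there exist constants $\mathcal{K}_d>0$ and $\mu_d>0$ such that
\[
|E_d(1,\eta)|\leq \mathcal{K}_d\,\omega_d\,e^{-\mu_d|\eta|^{2d/(2d-1)}},\qquad \eta\in\R^N,
\]
with $\omega_d^{-1}=\int_{\R^N}e^{-\mu_d|\eta|^{2d/(2d-1)}}d\eta$. Calling $F_d$ the normalized Gaussian-type kernel on the right, one has $\|F_d\|_1=1$ and $\|F_d\|_\infty=\omega_d$; interpolating,
\[
\|E_d(1,\cdot)\|_r\leq \mathcal{K}_d\|F_d\|_r\leq \mathcal{K}_d\,\omega_d^{1-1/r}\leq \mathcal{K}_d(1+\omega_d),
\]
for every $1\leq r\leq\infty$. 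Setting $\mathcal{H}_d:=\mathcal{K}_d(1+\omega_d)$ yields the claim.

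The only nontrivial ingredient is the majorizing kernel, which is precisely what \cite{GP} provides; all other steps are routine applications of Young's inequality, scaling, and the interpolation bound that gives an $r$-independent constant. Hence I would expect no real obstacle beyond citing \cite{GP} for the polyharmonic kernel estimate.
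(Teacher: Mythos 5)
Your proposal is correct and follows exactly the route the paper intends: Young's inequality plus the scaling $E_d(t,x)=t^{-N/(2d)}E_d(1,t^{-1/(2d)}x)$, with the $r$-independent bound on $\|E_d(1,\cdot)\|_r$ coming from the majorizing kernel of \cite{GP} and interpolation between its $L^1$ and $L^\infty$ norms, precisely as in the proof of Proposition \ref{LPLQQ}. The paper itself only sketches this by reference to the $d=2$ case, so your write-up matches its argument in full.
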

For the global existence the nonlinearity $f$ will satisfy \eqref{1.4} with $m$ a real number such that ${N(m-1)\over 2d}\geq 2,\; m\geq 2.$  Using similar arguments as in the proof of Theorem \ref{GE}, we obtain the following result.
\begin{theorem}[{Global existence}]
\label{GEd} Assume that the nonlinearity $f$ satisfies \eqref{1.4} with $m$ a real number such that ${N(m-1)\over 2d}\geq 2,$ $m\geq 2.$  Then, there exists a positive constant $\varepsilon>0$ such that for every initial data $u_0\in \EP$
which satisfies $\|u_0\|_{\EP} \leqslant\varepsilon,$ there exists a weak-mild solution $u\in L^{\infty}(0,\infty;\EP)$
of the Cauchy problem \eqref{1.1poly} satisfying
\begin{equation*}
    \lim_{t\longrightarrow0}\|u(t)-{\rm e}^{-t(-\Delta)^d}u_{0}\|_{\EP}=0.
\end{equation*}
Moreover, there exists a constant $C>0$ such that,
\begin{equation*}
\|u(t)\|_{p}\leq\,C\,t^{-\sigma},\quad\forall\;t>0,
\end{equation*}
where $$\sigma={1\over m-1}-{N\over 2dp}>0,$$with ${N(m-1)\over 2d}<p<\infty$ if $N\geq 4d$ and $\max\left\{m,\;{2N(m-1)\over 4d-N}\right\}\leq p\leq \infty$ if $N<4d$.
\end{theorem}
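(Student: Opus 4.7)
The plan is to replicate the strategy used for Theorem~\ref{GE} with $d=2$, substituting the biharmonic semigroup by $\mathrm{e}^{-t(-\Delta)^d}$ throughout. The key point is that all ingredients needed for the biharmonic proof have natural polyharmonic analogs: Proposition~\ref{LPLQQd} replaces Proposition~\ref{LPLQQ}; the linear Orlicz estimates of Proposition~\ref{pre} go through verbatim with the exponent $\tfrac{N}{4p}$ replaced by $\tfrac{N}{2dp}$, since only the self-similar scaling and the $L^p$--$L^q$ smoothing bound enter their proof; and Lemmas \ref{sarah55} and \ref{med}, as well as the identities for $\Gamma$ and $\mathcal{B}$, are purely Orlicz-space facts independent of $d$. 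So the proof is mainly a scaling bookkeeping exercise.

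I would run a Banach fixed-point argument on the space
\[
Y_M := \Bigl\{\,u\in L^\infty(0,\infty;\EP);\;\sup_{t>0} t^{\sigma}\|u(t)\|_p + \|u\|_{L^\infty(0,\infty;\EP)}\leq M\,\Bigr\},
\]
with $\sigma=\tfrac{1}{m-1}-\tfrac{N}{2dp}>0$ and $p>\tfrac{N(m-1)}{2d}$, endowed with $d(u,v)=\sup_{t>0} t^\sigma\|u(t)-v(t)\|_p$, for the map
\[
\Phi(u)(t) = \mathrm{e}^{-t(-\Delta)^d}u_0 + \int_0^t \mathrm{e}^{-(t-s)(-\Delta)^d} f(u(s))\,ds.
\]
The linear piece is controlled by $t^\sigma\|\mathrm{e}^{-t(-\Delta)^d}u_0\|_p\leq C\|u_0\|_{\EP}$ via Proposition~\ref{LPLQQd} and Lemma~\ref{sarah55}. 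The nonlinear piece is estimated using $|f(u)-f(v)|\leq C|u-v|\sum_{k\geq 0}\tfrac{\lambda^k}{k!}(|u|^{2k+m-1}+|v|^{2k+m-1})$, Hölder and interpolation between $L^p$ and $L^\rho$ with $\|u\|_\rho\leq \Gamma(\rho/2+1)^{1/\rho}\|u\|_{\EP}$, and the beta-function computation carried out in Section~5 with $\tfrac{N}{4}$ replaced by $\tfrac{N}{2d}$. Choosing $\theta_k$ in the same admissible range as before makes the series in $k$ converge, and a small-$M$ assumption turns $\Phi$ into a contraction.

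The one step that requires care is the $L^\infty(0,\infty;\EP)$ bound on the Duhamel integral, which forces a split into three dimension ranges as in Theorem~\ref{GE}. For $N\geq 4d+1$, I would derive the analog of Corollary~\ref{lemme estimates}: combining Proposition~\ref{pre}(ii)--(iii) adapted to $(-\Delta)^d$ gives an integrable kernel $\kappa_d(t)=\tfrac{2\mathcal{H}_d}{\sqrt{\log 2}}\min\{t^{-\frac{N}{2dq}}+1,\,t^{-\frac{N}{2d}}(\log(t^{-\frac{N}{2d}}+1))^{-1/2}\}$ for any $q>\tfrac{N}{2d}$. For $N<4d$ the estimate \eqref{log} generalizes to the range $0\leq s\leq t-a^{-2d/N}$, and the same splitting of the Duhamel integral and beta-function argument applies. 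The critical dimension is $N=4d$, which plays the role of $N=8$ in the biharmonic case; here I would introduce the auxiliary Orlicz space $L^\phi(\R^{4d})$ with $\phi(u)=\mathrm{e}^{u^2}-1-u^2$ and reprove the analog of Corollary~\ref{forn=8} using Proposition~\ref{LPLQQd} with exponent $-\tfrac{N}{2d}(1-\tfrac{1}{2k})=-2+\tfrac{1}{k}$, summed starting at $k\geq 2$. I expect this critical-dimension estimate to be the main (though still routine) obstacle, since it is the one place where the proof structure differs qualitatively from the supercritical case.

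Finally, the continuity statement $\lim_{t\to 0}\|u(t)-\mathrm{e}^{-t(-\Delta)^d}u_0\|_{\EP}=0$ is proved as in Section~5.4: bound $\|f(u)\|_r\leq C\|u\|_{\EP}^m$ by Hölder, Lemma~\ref{sarah55} and Lemma~\ref{med}, then apply Proposition~\ref{pre}(iii) (with $N/4$ replaced by $N/2d$) to the Duhamel term, getting an upper bound $C_1 t + C_2 t^{1-N/(2dq)}\to 0$. Weak$^*$ convergence $u(t)\rightharpoonup u_0$ as $t\to 0$ follows by the same density and duality argument used in \cite{Ioku}. The decay estimate $\|u(t)\|_p\leq C t^{-\sigma}$ is contained in the very definition of $Y_M$.
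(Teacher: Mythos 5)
Your proposal follows exactly the route the paper takes: the paper itself only sketches this proof, stating the polyharmonic $L^p$--$L^q$ smoothing estimate (Proposition~\ref{LPLQQd}) via the majorizing kernel of \cite{GP} and then invoking the arguments of Theorem~\ref{GE} with $\frac{N}{4}$ replaced by $\frac{N}{2d}$, which is precisely your plan, including the correct identification of the dimension split at $N=4d$ and the auxiliary Orlicz space $\phi(u)=\mathrm{e}^{u^2}-1-u^2$ for the critical case. Your write-up is in fact more detailed than the paper's, and I see no gap in it.
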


\end{document}